\documentclass [12pt]{amsart} 

\usepackage{amsmath}
\usepackage{amssymb}
\usepackage[mathscr]{eucal}
\usepackage{fullpage}
\usepackage{enumerate}
\usepackage{verbatim}

\newcommand{\R}{\mathbb{R}}
\newcommand{\N}{{\mathbb N}}
\newcommand{\Z}{{\mathbb Z}}

\newcommand{\ch}{\mathbf 1}

\newcommand{\spa}{\operatorname{span}}

\numberwithin{equation}{section}
\newtheorem{theorem}{Theorem}[section]

\newtheorem{lemma}[theorem]{Lemma}
\newtheorem{proposition}[theorem]{Proposition}

\theoremstyle{definition}
\newtheorem{definition}{Definition}[section]

\newtheorem{example}{Example}[section]
\newtheorem{problem}{Problem}[section]
\newtheorem{conjecture}{Conjecture}[section]

\newcommand{\supp}{\operatorname{supp}}
\newcommand{\bz}{{\mathbb Z}}
\newcommand{\bn}{{\mathbb N}}
\newcommand{\hv}{\hat\varphi}
\newcommand{\hp}{\hat\psi}
\newcommand{\e}{\epsilon}

\begin{document}

\title{Open problems in wavelet theory}

\author{Marcin Bownik}
\address{Department of Mathematics, University of Oregon, Eugene, OR 97403--1222, USA}
\email{mbownik@uoregon.edu}

\author{Ziemowit Rzeszotnik}
\address{Mathematical Institute, University of Wroc\l aw, 50--384 Wroc\l aw, Poland}
\email{zioma@math.uni.wroc.pl}

\date{\today}

\keywords{wavelets}

\subjclass{Primary: 42C40, Secondary: 46C05}

\thanks{The first author was supported in part by the NSF grant DMS-1665056.}

\begin{abstract} 
We present a collection of easily stated open problems in wavelet theory and we survey the current status of answering them. This includes a problem of Larson \cite{La2} on minimally supported frequency wavelets. We show that it has an affirmative answer for MRA wavelets.
\end{abstract}

\maketitle 

\section{Introduction}

The goal of this paper is twofold. The first goal is to present a collection of open problems on wavelets which have simple  formulations. Many of these problems are well-known, such as connectivity of the set of wavelets. Others are less known, but nevertheless deserve a wider dissemination. At the same time we present the current state of knowledge about these problems. These include several results giving a partial progress, which indicate inherent difficulties in answering them. One of such problems was formulated by Larson \cite{La2} and asks about frequency supports of orthonormal wavelets. Must they contain a wavelet set? The second goal of the paper is to give an affirmative answer to this problem for the class of MRA wavelets.

\section{One dimensional wavelets} \label{S2}

In this section we discuss problems in wavelet theory that remain unanswered even in the classical setting of one dimensional dyadic wavelets. Many of these problems have higher dimensional analogues which also remain open.

\begin{definition}\label{wavelet}
We say that $\psi \in L^2(\R)$ is an o.n. wavelet if the collection of translates and dyadic dilates
\begin{equation}\label{wavelet5}
\psi_{j,k}(x) := 2^{j/2} \psi(2^j x-k), \qquad j,k\in \Z
\end{equation}
forms an o.n. basis of $L^2(\R)$.
\end{definition}

\subsection{Connectivity of wavelets}
One of the fundamental areas in the theory of wavelets is the investigation of properties of the collection of all wavelets as a subset of $L^2(\R)$.  The most prominent problem in this area was formulated independently by D. Larson and G. Weiss around the year 1995. 

\begin{problem}\label{con}
Is the collection of all orthonormal wavelets (as a subset of the unit sphere in $L^2(\R)$) path connected in 
$L^2(\R)$ norm?
\end{problem}

Despite several attempts and significant initial progress Problem \ref{con} remains open. In addition, variants of Problem \ref{con} for Parseval wavelets and Riesz wavelets are also open. A strong initial thrust toward answering this problem was given by a joint work by a group of authors from Texas A\&M University and Washington University led by D. Larson and G. Weiss, respectively. The paper \cite{wutam} written by the Wutam consortium gave a positive answer to Problem \ref{con} for the class of MRA wavelets. A concept of a multiresolution analysis (MRA) is one of the most fundamental in the wavelet theory. It was introduced by Mallat and Meyer \cite{Ma, Me}.

\begin{definition}\label{MRA}
A sequence $\{V_j:j\in\Z\}$ of closed subspaces of $L^2(\R)$ is called a {\it multiresolution analysis} (MRA) if
\begin{enumerate}
\item[(M1)] $V_j \subset V_{j+1}$,
\item[(M2)] $f(\cdot)\in V_j \iff f(2\cdot) \in V_{j+1}$,
\item[(M3)] $\bigcap_{j\in\Z}V_j=\{0\}$,
\item[(M4)] $\overline{\bigcup_{j\in\Z}V_j}=L^2(\R)$,
\item[(M5)] There exists $\varphi \in V_0$ such that its integer translates $(\varphi (\cdot -k))_{k\in \Z}$ form an o.n. basis of $V_0$.
\end{enumerate}
We say that an o.n. wavelet $\psi\in L^2(\R)$ is associated with an MRA $\{V_j:j\in\Z\}$ if $\psi$ belongs to the orthogonal complement $V_1 \ominus V_0$ of $V_0$ inside $V_1$.
\end{definition}

 A Fourier transform defined initially for $\psi \in L^1(\R) \cap L^2(\R)$ is given by
\[
\hat \psi(\xi) = \int_R \psi(x) e^{-2\pi i x \xi} dx \qquad \xi \in \R.
\]
There is a simple characterization of MRA wavelets in terms of the wavelet dimension function, see \cite[Theorem 7.3.2]{HW}.

\begin{theorem}\label{MRA2}
Let $\psi \in L^2(\R)$ be an orthonormal wavelet. Then $\psi$ is an MRA wavelet if and only if
\[
\mathscr D_\psi(\xi):= \sum_{j=1}^\infty \sum_{k\in\Z}|\hat\psi(2^j(\xi+k))|^2 =1 \qquad\text{for a.e. } \xi \in \R.
\]
\end{theorem}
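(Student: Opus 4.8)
The plan is to establish the two implications separately; the forward one is a short Fourier-side computation and the converse carries all the weight.

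For the forward direction, suppose $\psi$ is associated with an MRA $\{V_j\}$ with scaling function $\varphi$. As usual, (M5) together with $\psi,\varphi\in V_1$ produces $1$-periodic filters $m_0,m_1$ with $\hat\varphi(2\xi)=m_0(\xi)\hat\varphi(\xi)$, $\hat\psi(2\xi)=m_1(\xi)\hat\varphi(\xi)$, $|m_0(\xi)|^2+|m_0(\xi+\tfrac12)|^2=1$ and $|m_1(\xi)|^2=|m_0(\xi+\tfrac12)|^2$ for a.e.\ $\xi$. These give the pointwise identity $|\hat\varphi(\xi)|^2=|\hat\varphi(2\xi)|^2+|\hat\psi(2\xi)|^2$ a.e.; iterating it and using that $|\hat\varphi(2^n\xi)|\to0$ a.e.\ as $n\to\infty$ (a consequence of (M3)) yields $|\hat\varphi(\xi)|^2=\sum_{j\ge1}|\hat\psi(2^j\xi)|^2$ a.e. Summing over the integer translates and invoking $\sum_{k\in\Z}|\hat\varphi(\xi+k)|^2=1$ a.e.\ (orthonormality of the translates of $\varphi$) then gives $\mathscr D_\psi(\xi)=\sum_{k\in\Z}\sum_{j\ge1}|\hat\psi(2^j(\xi+k))|^2=\sum_{k\in\Z}|\hat\varphi(\xi+k)|^2=1$ a.e.

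For the converse, assume $\mathscr D_\psi\equiv1$ a.e. The natural candidate MRA is $V_j:=\overline{\spa}\{\psi_{l,k}:l<j,\ k\in\Z\}$. I would first observe that (M1)--(M4) hold for \emph{every} orthonormal wavelet: (M1) is the monotonicity of the index set; (M2) holds because the unitary dilation $f\mapsto\sqrt2\,f(2\cdot)$ carries $\psi_{l,k}$ to $\psi_{l+1,k}$ and hence $V_j$ to $V_{j+1}$; (M3) holds because an $f$ lying in every $V_j$ is orthogonal to all $\psi_{l,k}$, which form an orthonormal basis of $L^2(\R)$; and (M4) holds because $\bigcup_j V_j$ already contains every $\psi_{l,k}$. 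Since moreover $\psi\in V_1\ominus V_0=\overline{\spa}\{\psi_{0,k}\}_k$ automatically, the theorem reduces to verifying (M5), i.e.\ that the shift-invariant space $V_0$ admits a single generator whose integer translates form an orthonormal basis of $V_0$; by the structure theory of shift-invariant subspaces of $L^2(\R)$ this holds precisely when the dimension function $\dim_{V_0}$ — the a.e.-defined dimension of the fibers $\{(\hat f(\xi+m))_{m\in\Z}:f\in V_0\}$ — equals $1$ a.e. Now, the orthogonal decomposition $V_1=V_0\oplus\overline{\spa}\{\psi_{0,k}\}_k$ (the second summand having dimension function $\equiv1$, since the translates of $\psi$ are orthonormal), the additivity of dimension functions over orthogonal shift-invariant sums, and the even/odd fibre splitting coming from $V_1=\{\sqrt2\,f(2\cdot):f\in V_0\}$, combine to give the functional equation
\[
\dim_{V_0}(\xi/2)+\dim_{V_0}\bigl((\xi+1)/2\bigr)=\dim_{V_0}(\xi)+1\qquad\text{a.e.},
\]
and a direct calculation (split $\Z$ into even and odd parts, use $\sum_{k\in\Z}|\hat\psi(\xi+k)|^2=1$ a.e.) shows that $\mathscr D_\psi$ satisfies the same identity. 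Finally one identifies the two functions: from $\hat\psi_{-j,k}(\xi)=2^{j/2}e^{-2\pi i2^jk\xi}\hat\psi(2^j\xi)$ one finds that the fibre of $V_0$ at $\xi$ is the closed span of the vectors $v_j(\xi):=(\hat\psi(2^j(\xi+m)))_{m\in\Z}$, $j\ge1$, with $\sum_{j\ge1}\|v_j(\xi)\|_{\ell^2}^2=\mathscr D_\psi(\xi)$, and one shows that $\{v_j(\xi)\}_{j\ge1}$ is a Parseval frame for its closed span for a.e.\ $\xi$. Since a Parseval frame for a subspace has squared norms summing to the dimension of that subspace, this yields $\dim_{V_0}(\xi)=\sum_{j\ge1}\|v_j(\xi)\|_{\ell^2}^2=\mathscr D_\psi(\xi)\equiv1$, so (M5) holds and $\psi$ is an MRA wavelet.

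The only step above that is not a routine verification is this last identification $\dim_{V_0}=\mathscr D_\psi$ — equivalently, the fact that the wavelet dimension function is integer-valued and equals the dimension of the core subspace $V_0$. The functional equation reduces the matter to a rigidity statement, but obtaining the pointwise equality (establishing the Parseval-frame property of the fibres $\{v_j(\xi)\}$; or, alternatively, proving the one-sided bound $\dim_{V_0}\le\mathscr D_\psi$ together with $\dim_{V_0}\in L^1(\T)$ and then integrating the functional equation to force $\dim_{V_0}\equiv1$) is where the real work lies, and is the content of \cite[Ch.~7]{HW}, resting ultimately on the two equations that characterise orthonormal wavelets.
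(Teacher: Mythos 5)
The paper does not actually prove Theorem \ref{MRA2}; it quotes it from \cite[Theorem 7.3.2]{HW}, so there is no in-paper argument to compare against. Your outline is, in substance, the standard proof from that reference: the forward direction via the telescoping identity $|\hv(\xi)|^2=|\hv(2\xi)|^2+|\hp(2\xi)|^2$ is correct and complete, and the converse correctly reduces everything to showing that the dimension function of the space of negative dilates $V_0$ is identically $1$, via the consistency equation and the identification $\dim_{V_0}=\mathscr D_\psi$.

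Two places deserve more care. First, you invoke the structure theory of shift-invariant spaces for $V_0=\overline{\spa}\{\psi_{l,k}:l<0\}$ without checking that $V_0$ is shift-invariant; this is not automatic, since for $l<0$ the translate $\psi_{l,k}(\cdot-n)=2^{l/2}\psi(2^l\cdot-2^ln-k)$ is not of the form $\psi_{l,k'}$ unless $2^{|l|}$ divides $n$. It is true for orthonormal wavelets, but needs the observation that $I-P_{V_0}=\sum_{l\ge0}\sum_k\langle\cdot,\psi_{l,k}\rangle\psi_{l,k}$ visibly commutes with integer translations (because $T_n\psi_{l,k}=\psi_{l,k+2^ln}$ with $2^ln\in\Z$ when $l\ge0$), hence so does $P_{V_0}$. (The paper itself warns that $V(\psi)$ can fail to be shift-invariant for other dilations, so this step genuinely uses $l\ge 0$.) Second, the crux --- that $\{v_j(\xi)\}_{j\ge1}$ with $v_j(\xi)=(\hat\psi(2^j(\xi+m)))_m$ is a Parseval frame for the fibre of $V_0$, whence $\dim_{V_0}=\mathscr D_\psi$ --- is stated but deferred to \cite{HW}. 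Note that this is \emph{not} an instance of the generic ``Parseval generators give Parseval fibres'' principle: the fibre vectors coming from the orthonormal generators $\psi_{-j,0}$ carry the extra factor $2^{j/2}$, and $\sum_j 2^j\|v_j(\xi)\|^2\ne\mathscr D_\psi(\xi)$, so the Parseval property of the unnormalized $v_j(\xi)$ has to be extracted from the two characterizing equations for orthonormal wavelets. You are upfront that this is where the work lies; since the theorem you are proving \emph{is} \cite[Theorem 7.3.2]{HW}, citing Chapter 7 of \cite{HW} for that step makes the argument an outline rather than a self-contained proof. Everything else (the verification of (M1)--(M4), the functional equation, and the integration argument forcing $\dim_{V_0}\equiv1$ from $\dim_{V_0}\le\mathscr D_\psi\equiv 1$) is correct.
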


The main theorem of the Wutam consortium \cite[Theorem 4]{wutam} shows that the collection of all MRA wavelets is path connected.

\begin{theorem}\label{con1}
Let $\psi_0$ and $\psi_1$ be two MRA wavelets which are not necessarily associated with the same MRA. Then, there exists a continuous map $\Psi: [0,1] \to L^2(\R)$ such that $\Psi(0)=\psi_0$, $\Psi(1)=\psi_1$, and $\Psi(t)$ is an MRA wavelet for all $t\in [0,1]$.
\end{theorem}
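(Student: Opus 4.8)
The plan is to show that every MRA wavelet can be connected by a continuous path (within the set of MRA wavelets) to a single fixed reference wavelet, namely the Shannon (Littlewood--Paley) wavelet $\psi_S$ defined by $\hat\psi_S = \ch_{[-1,-1/2)\cup[1/2,1)}$; transitivity of ``being path-connected'' then yields the statement. The key structural fact is that an MRA wavelet $\psi$ is completely determined, via the standard MRA machinery, by its associated scaling function $\varphi$, which in turn is determined (up to a unimodular phase) by the low-pass filter $m_0 \in L^2(\T)$ satisfying the quadrature mirror relation $|m_0(\xi)|^2 + |m_0(\xi+1/2)|^2 = 1$ together with the normalization $|\hat\varphi(\xi)|^2 = \prod_{j=1}^\infty |m_0(2^{-j}\xi)|^2$ and the low-pass condition $|\hat\varphi(0)|=1$. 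Concretely, $\hat\psi(\xi) = e^{i\nu(\xi)} \overline{m_0(\xi/2 + 1/2)}\, \hat\varphi(\xi/2)$ for a suitable phase. So the whole problem is transported to the space of admissible low-pass filters, and one wants to show \emph{that} space (with an appropriate topology making the above assignment continuous into $L^2(\R)$) is path-connected.

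The main steps I would carry out are: (1) Set up the precise correspondence $\psi \leftrightarrow \varphi \leftrightarrow m_0$ and verify that if $t \mapsto m_0^{(t)}$ is a path of admissible filters that is continuous in, say, $L^2(\T)$ and suitably uniformly controlled near $\xi = 0$, then $t \mapsto \hat\varphi^{(t)}$ is continuous in $L^2(\R)$ (dominated convergence on the infinite product, with the tails controlled by the QMF identity) and hence $t \mapsto \psi^{(t)}$ is continuous in $L^2(\R)$. (2) Reduce to the filter level: given an arbitrary admissible $m_0$, first deform it to ``kill the phase'' — e.g. rotate the unimodular factors — arriving at a filter with $m_0 \geq 0$ in a neighborhood of $0$, handling the zero set of $m_0$ carefully so that the normalization $|\hat\varphi(0)|=1$ is never lost. (3) Then deform the modulus: interpolate $|m_0^{(t)}(\xi)|^2 = (1-t)|m_0(\xi)|^2 + t\,|m_S(\xi)|^2$ between the given filter and the Shannon filter $m_S = \ch_{[-1/4,1/4)}$ (periodized), noting that this convex combination again satisfies the QMF identity and keeps the value $1$ at $0$, then lift back to a filter by taking nonnegative square roots, and check continuity. (4) Assemble the pieces into one path from $\psi$ to $\psi_S$.

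The hard part will be Step (3) together with the continuity bookkeeping in Step (1): the assignment $m_0 \mapsto \hat\varphi$ is an \emph{infinite} product $\prod_j m_0(2^{-j}\xi)$, so small $L^2(\T)$ perturbations of $m_0$ need not give small perturbations of $\hat\varphi$ unless one has uniform control near the origin (where infinitely many factors ``accumulate'') and unless the zero sets of $m_0$ behave well — degenerate filters whose infinite product fails to define an $L^2$ scaling function, or whose $\hat\varphi$ develops extra zeros violating the orthonormality of translates, must be avoided along the entire path. One must also ensure that the intermediate objects remain genuine \emph{orthonormal} wavelets and not merely Bessel or Parseval systems; this is where Theorem \ref{MRA2} (the dimension function equals $1$) and the classical characterization of MRA scaling functions (Cohen's condition, or equivalently $\sum_k |\hat\varphi(\xi+k)|^2 = 1$) enter, and one should verify these invariants are preserved by the deformation. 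A cleaner route, which I would pursue if the direct filter homotopy proves too delicate, is to instead work with the scaling functions directly: connect $\hat\varphi$ to $\hat\varphi_S = \ch_{[-1/2,1/2)}$ by a path of functions satisfying $|\hat\varphi^{(t)}(\xi)|^2 + |\hat\varphi^{(t)}(\xi+1/2)|^2 \le $ the right periodization identities and the refinability constraint, exploiting the fact (also due to the Wutam group and to Papadakis--Šikić--Weiss) that the set of MRA scaling functions is itself convex-like in the relevant sense, then push the path forward through the $\varphi \mapsto \psi$ map.
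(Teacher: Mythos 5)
You should first note that the paper itself does not prove Theorem~\ref{con1}: it is quoted from the Wutam consortium \cite[Theorem 4]{wutam}, so there is no in-paper argument to measure your proposal against. Your outline does follow the broad strategy of the literature --- pass from the wavelet to its scaling function and low-pass filter, connect everything to the Shannon wavelet, and treat the unimodular phase separately --- so the plan is aimed in the right direction. (One step you should make explicit rather than bury in the phrase ``for a suitable phase'': an MRA has many wavelets, so you must first connect an arbitrary MRA wavelet to the canonical one of its own MRA by deforming the $1$-periodic unimodular factor $\nu$ in $\hat\psi(2\xi)=\nu(2\xi)e^{2\pi i\xi}\overline{m_0(\xi+\tfrac12)}\hat\varphi(\xi)$ to $1$ via $\nu_s=e^{is\alpha}$, $\nu=e^{i\alpha}$; this part is easy.)

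As a proof, however, the proposal has a genuine gap, and it sits exactly where you flag ``the hard part.'' The interpolation $|m_t|^2=(1-t)|m_0|^2+t|m_S|^2$ preserves the QMF identity and the normalization at the origin, hence (F1) and (F2) for $\hat\varphi_t(\xi)=\prod_{j\ge1}m_t(2^{-j}\xi)$, but it does \emph{not} automatically preserve (F3), i.e.\ orthonormality of the translates of $\varphi_t$ (equivalently $\|\varphi_t\|_{L^2}=1$). A QMF filter with $m(0)=1$ whose infinite product satisfies (F2) can still fail (F3): the stretched Haar filter $m(\xi)=\tfrac12(1+e^{-6\pi i\xi})$ yields $\varphi=\tfrac13\ch_{[0,3]}$, for which $\sum_k|\hat\varphi(\xi+k)|^2$ vanishes at $\xi=1/3$. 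So unless you prove that your specific path avoids this degeneration, the intermediate $\psi_t$ are only MRA \emph{Parseval frame} wavelets --- precisely the enlarged class in which Paluszy\'nski, \v Siki\'c, Weiss, and Xiao establish connectivity --- and not orthonormal wavelets, which is what the theorem asserts. The $L^2(\R)$-continuity of $t\mapsto\hat\varphi_t$ through the infinite product is a second item you acknowledge but do not resolve. Finally, your fallback of interpolating scaling functions directly fails for a concrete reason: if $\hat\varphi_t^2=(1-t)\hat\varphi_0^2+t\hat\varphi_1^2$, then the quotient $\hat\varphi_t(2\xi)/\hat\varphi_t(\xi)$ is in general not $1$-periodic, so (F1) is violated and $\varphi_t$ is not refinable. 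In short: correct skeleton, but the decisive step --- preservation of orthonormality along the deformation --- is asserted rather than proved.
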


Another fundamental connectivity result for the class of minimally supported frequency (MSF) wavelets was obtained by Speegle \cite{S}.
\begin{definition}
Let $\psi \in L^2(\R)$ be an o.n. wavelet. We say that $\psi$ is an MSF wavelet if its frequency support 
\[
\supp \hat \psi = \{\xi \in \R: \hat \psi(\xi) \ne 0 
\}
\]
has minimal Lebesgue measure (equal 1).
\end{definition}

Equivalently, $\psi \in L^2(\R)$ is an MSF wavelet if and only if $|\hat \psi|= \ch_W$ for some measurable set $W \subset \R$, known as {\it wavelet set}, which satisfies simultaneous translation and dilation tiling of $\R$. That is, 
\begin{itemize}
\item $\{W+k\}_{k\in \Z}$ is a partition of $\R$ modulo null sets, and 
\item $\{2^j W\}_{j\in \Z}$ is a partition of $\R$ modulo null sets. 
\end{itemize}
Speegle \cite[Theorem 2.5 and Corollary 2.6]{S} has shown the following result.

\begin{theorem}\label{con2}
The wavelet sets are path-connected in the symmetric difference metric. Consequently, the collection of MSF wavelets forms a path connected subset of $L^2(\R)$.
\end{theorem}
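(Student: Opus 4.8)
The plan is to establish the two assertions in turn, deducing the statement about MSF wavelets from the one about wavelet sets. For the wavelet sets it suffices to join an arbitrary wavelet set $W$ to the fixed Shannon set $S:=[-1,-\tfrac12)\cup[\tfrac12,1)$ (a wavelet set) by a path that is continuous in the symmetric difference metric; an arbitrary pair $W_0,W_1$ is then joined by concatenating the path from $W_0$ to $S$ with the reverse of the path from $W_1$ to $S$. I would phrase the two defining properties of a wavelet set $V$ as congruences: $V$ is $\Z$-translation congruent to $[0,1)$, meaning $x\mapsto x\bmod1$ is a bijection of $V$ onto $[0,1)$ modulo null sets, and $V$ is $2$-dilation congruent to $D:=[-2,-1)\cup[1,2)$, meaning $x\mapsto2^jx$ (for the unique $j\in\Z$ putting the image in $D$) is a bijection of $V$ onto $D$ modulo null sets. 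From this one extracts a \emph{swap lemma}: if $V$ is a wavelet set, $E\subseteq V$, $E'\cap V=\emptyset$, and $E$ and $E'$ are $\Z$-translation congruent to a common $T\subseteq[0,1)$ and $2$-dilation congruent to a common $\Delta\subseteq D$, then $(V\setminus E)\cup E'$ is again a wavelet set, since deleting $E$ and inserting $E'$ preserves both congruences.

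The structural observation driving everything is that $W$ and $S$ always differ by exactly one such swap. Put $A:=W\setminus S$ and $A':=S\setminus W$. Because $W$ and $S$ are both wavelet sets, the translation shadow of $W\cap S$ in $[0,1)$ equals the complement of that of $A$ when computed inside $W$, and the complement of that of $A'$ when computed inside $S$; hence $A$ and $A'$ have the same translation shadow $T$, and likewise the same dilation shadow $\Delta\subseteq D$. Since $W=(S\setminus A')\cup A$, the swap lemma recovers $W$ from $S$ in one move. To make the move continuous, let $\phi,\theta$ be the (mod-null) bijections of $A$ onto $T,\Delta$ given by reduction mod $1$ and by the dilation-address map, and $\phi',\theta'$ the analogous bijections of $A'$. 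I would look for nested families $A_t:=\phi^{-1}(T_t)\subseteq A$ and $A'_t:=(\phi')^{-1}(T_t)\subseteq A'$, $t\in[0,1]$, with $T_t\subseteq T$ increasing from $\emptyset$ to $T$, $|T_t|=t|T|$, and matching dilation shadows $\theta(A_t)=\theta'(A'_t)$; then $t\mapsto(S\setminus A'_t)\cup A_t$ would be a path of wavelet sets (by the swap lemma) running from $S$ at $t=0$ to $W$ at $t=1$, continuous since $|A_t\triangle A_s|$ and $|A'_t\triangle A'_s|$ both equal $|t-s|\,|T|$. The matching requirement is exactly that each $T_t$ is invariant under the bijection $g:=(\beta')^{-1}\circ\beta$ of $T$, where $\beta:=\theta\circ\phi^{-1}$ and $\beta':=\theta'\circ(\phi')^{-1}$ are the two induced piecewise dyadic-affine bijections $T\to\Delta$.

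The main obstacle I anticipate is obtaining such a nested family of $g$-invariant sets of prescribed measures. If the quotient of $T$ by the $g$-orbits carries a non-atomic $\sigma$-finite measure along which one can select the $T_t$, the argument closes; this is the case whenever $g$ is, say, far from ergodic. But $g$ is only measure-class preserving (not measure preserving), and can be ergodic, and then no nontrivial invariant set exists and the single swap cannot be executed in one continuous sweep. The way around this, which I expect to be the technical heart of the proof, is to factor the passage $W\rightsquigarrow S$ as a composition of countably many \emph{small} compatible swaps $W=Z_0,Z_1,Z_2,\dots$ with $\sum_n|Z_n\triangle Z_{n+1}|<\infty$ and $Z_n\to S$, each chosen simple enough --- for instance after first reducing $W$ to a wavelet set that is a finite union of intervals, for which the transition maps are piecewise affine with finitely many branches and their invariant families are explicit --- that its transition map is non-ergodic with a visible continuous family of invariant subsets; one then glues the resulting continuous arcs along abutting subintervals of $[0,1]$ and checks continuity at the accumulation point from $Z_n\to S$. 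Arranging this factorization for a general $W$, and in particular the density step producing the interval wavelet sets while keeping every intermediate swap compatible, is where the real work would be.

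The MSF statement then follows formally. First, $\psi$ is an MSF wavelet precisely when $\hat\psi=s\,\ch_W$ for some wavelet set $W$ and some measurable unimodular $s$: the equation $\sum_{j\in\Z}|\hat\psi(2^j\xi)|^2=1$ a.e.\ is the dilation tiling, and the cross-conditions $\sum_{j\ge0}\hat\psi(2^j\xi)\overline{\hat\psi(2^j(\xi+q))}=0$ a.e.\ for odd $q$ hold automatically, because $2^jq$ is a nonzero integer for $j\ge0$ and the translation tiling forces $W\cap(W+2^jq)$ to be null. Given two MSF wavelets with $\hat\psi_i=s_i\,\ch_{W_i}$, write $s_i=e^{2\pi i b_i}$ with $b_i$ real and measurable; then $t\mapsto\psi_i^t$, defined by $\widehat{\psi_i^t}:=e^{2\pi i(1-t)b_i}\ch_{W_i}$, is an $L^2$-continuous path of MSF wavelets (by dominated convergence) joining $\psi_i=\psi_i^0$ to the MSF wavelet $\psi_i^1$ with Fourier transform $\ch_{W_i}$. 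Since $\|\ch_{W_0}-\ch_{W_1}\|_{L^2}^2=|W_0\triangle W_1|$, the map taking a wavelet set to the wavelet with that Fourier transform is continuous from the symmetric difference metric into $L^2(\R)$, so the path of wavelet sets constructed above carries $\psi_0^1$ to $\psi_1^1$; concatenating the three pieces joins $\psi_0$ to $\psi_1$.
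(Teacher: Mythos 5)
The paper does not actually prove this theorem: it is quoted from Speegle \cite{S}, so there is no in-text argument to compare against. Judged on its own terms, your proposal gets the standard framework right — translation/dilation congruence, the swap lemma, the observation that $A=W\setminus S$ and $A'=S\setminus W$ share both their translation and dilation shadows, and the continuity of $W\mapsto \ch_W$ from the symmetric difference metric into $L^2(\R)$. The final reduction from MSF wavelets to wavelet sets (the phase homotopy, together with the verification that an arbitrary measurable unimodular phase over a wavelet set still yields an o.n.\ wavelet) is complete and correct.

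The first and main assertion, however, is not proved. You correctly identify the obstruction: executing the single swap continuously requires an increasing family of subsets of $T$ of prescribed measure invariant under the induced transition map $g=(\beta')^{-1}\circ\beta$, and no nontrivial such family exists when $g$ is ergodic. The proposed remedy — factor the passage from $W$ to $S$ into countably many small compatible swaps, after first connecting $W$ to a wavelet set that is a finite union of intervals — is only a plan: no construction of the intermediate wavelet sets $Z_n$ is given, no argument is offered that each transition map can be arranged to be non-ergodic, and the ``density step'' (joining an arbitrary wavelet set to an interval one \emph{through wavelet sets}) is itself essentially as hard as the original problem, since $L^2$-approximation alone does not produce a path. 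This deferred step is precisely the technical core of Speegle's argument, which proceeds by decomposing $A$ and $A'$ into countably many pieces according to their dilation/translation addresses and moving mass between matched pieces. As written, the proposal is a correct reduction plus an honest but unexecuted sketch of the hard step, so it has a genuine gap.
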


Besides the last two results, little is known about the connectivity problem for general o.n. wavelets. However, there is a partial evidence that the answer to Problem \ref{con} is affirmative. Speegle and the authors \cite{BRS} have shown the following result characterizing wavelet dimension functions.

\begin{theorem}\label{dim}
Let $\psi \in L^2(\R)$ be an orthonormal wavelet. Then its wavelet dimension function
\begin{equation}\label{dim1}
\mathscr D(\xi)=\mathscr D_\psi(\xi)= \sum_{j=1}^\infty \sum_{k\in\Z}|\hat\psi(2^j(\xi+k))|^2 \qquad \xi \in \R,
\end{equation}
satisfies the following 4 conditions:
\begin{enumerate}
\item[(D1)] $\mathscr D :\R\to\N\cup\{0\}$ is a measurable 1-periodic function,
\item[(D2)] $\mathscr D(\xi)+\mathscr D(\xi+1/2)= \mathscr D(2\xi)+1$ for a.e. $\xi\in\R$,
\item[(D3)] $\sum_{k\in\Z}\ch_\Delta(\xi+k) \ge \mathscr D(\xi)$ for a.e. $\xi\in\R$, where 
\[
\Delta=\{\xi\in\R : \mathscr D(2^{-j}\xi)\ge 1 \text{ for } j\in\N\cup\{0\}\},
\]
\item[(D4)] $\liminf_{j\to \infty} \mathscr D(2^{-j}\xi)\ge 1$ for a.e. $\xi\in\R$.
\end{enumerate}
Conversely, for any function $\mathscr D$ satisfying the above 4 conditions, there exists an orthonormal MSF wavelet $\psi$ such that \eqref{dim1} holds for a.e. $\xi\in\R$.
\end{theorem}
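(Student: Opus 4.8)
The statement splits into the necessity of (D1)--(D4) and the converse realizability, and I would treat these by rather different means. \emph{Necessity} rests on recognizing $\mathscr D_\psi$ as the dimension function of a shift-invariant space. Attach to $\psi$ the subspaces $V_j:=\ov{\spa}\{\psi_{l,k}:l<j,\ k\in\Z\}$ and $W_l:=\ov{\spa}\{\psi_{l,k}:k\in\Z\}$, so that $L^2(\R)=\bigoplus_{l\in\Z}W_l$, $V_{j+1}=V_j\oplus W_j$, and the dyadic dilation $D$ carries $V_j$ onto $V_{j+1}$. Since $V_0^\perp=\bigoplus_{l\ge 0}W_l$ is visibly invariant under integer translations, $V_0$ is $\Z$-shift-invariant. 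Passing to the fiberization $f\mapsto(\hat f(\xi+k))_{k\in\Z}$ of $L^2(\R)$ over $[0,1)$, with range function $\xi\mapsto\tilde V_0(\xi)\subset\ell^2(\Z)$, a direct computation using the two standard identities satisfied by an orthonormal wavelet, $\sum_{l\in\Z}|\hat\psi(2^l\xi)|^2=1$ and $\sum_{k\in\Z}|\hat\psi(\xi+k)|^2=1$ for a.e.\ $\xi$, shows that $\dim\tilde V_0(\xi)=\mathscr D_\psi(\xi)$ a.e.; this yields (D1) at once, a fiber dimension being a nonnegative integer and periodicity and measurability being automatic.

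For (D2) I would combine $V_1=V_0\oplus W_0$ with $\dim\tilde W_0(\xi)=1$ a.e.\ (again because $\sum_k|\hat\psi(\xi+k)|^2=1$) and the even/odd de-interleaving identity $\dim\tilde V_1(2\xi)=\dim\tilde V_0(\xi)+\dim\tilde V_0(\xi+1/2)$ for $V_1=DV_0$; the two relations force $\mathscr D(\xi)+\mathscr D(\xi+1/2)=\mathscr D(2\xi)+1$. (This also drops straight out of the defining series, splitting the $k$-sum into even and odd indices and using $\sum_n|\hat\psi(2\xi+n)|^2=1$.) For (D4), keeping only the $k=0$ term of the series gives $\mathscr D(2^{-j}\xi)\ge\sum_{l\ge 1-j}|\hat\psi(2^l\xi)|^2$, which increases to $\sum_{l\in\Z}|\hat\psi(2^l\xi)|^2=1$ as $j\to\infty$. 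Condition (D3) is the delicate point: $\Delta$ consists of the frequencies $\xi$ all of whose dyadic contractions $2^{-j}\xi$, $j\ge 0$, lie in $\{\mathscr D\ge 1\}=\Z+\bigcup_{j\ge 1}2^{-j}\supp\hat\psi$, i.e.\ in the set of frequencies that $V_0$ can ``see'', and iterating the fiber recursion $\tilde V_0(\xi)=\tilde W_{-1}(\xi)\oplus(\tilde V_0(\xi/2)\oplus\tilde V_0(\xi/2+1/2))$ together with (D4) shows that $\tilde V_0(\xi)$ is carried by those coordinates $k$ for which $\xi+k\in\Delta$, so $\mathscr D(\xi)\le\#\{k:\xi+k\in\Delta\}$.

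For the \emph{converse} I would aim straight at a wavelet set. For an MSF wavelet with $|\hat\psi|=\ch_W$ one has, for a.e.\ $\xi$,
\[
\mathscr D_\psi(\xi)=\sum_{k\in\Z}\#\{j\ge 1:2^j(\xi+k)\in W\},
\]
so, given $\mathscr D$ satisfying (D1)--(D4), it suffices to build a measurable $W\subset\R$ that (i) tiles $\R$ under integer translations, (ii) tiles $\R$ under the dilations $\{2^j\}_{j\in\Z}$, and (iii) satisfies $\sum_k\#\{j\ge1:2^j(\xi+k)\in W\}=\mathscr D(\xi)$ a.e. The plan is an exhaustion (``gobbling'') construction: carry along a nested family of measurable sets that already translation-tile a piece of a fundamental domain and dilation-tile a dyadically complete region, keep a running lower bound for the partial dimension function, and at each stage transfer mass into the next dyadic ring in the essentially forced way -- using (D2) to propagate the partial dimension function consistently across scales, (D4) to guarantee the dilation tiling is achieved in the limit, and, crucially, (D3) to guarantee that there is always room inside $\Delta$ to absorb the required $j\ge1$ hits, so that the limiting dimension function equals $\mathscr D$ exactly rather than exceeding it. A convenient way to organize the recursion is to first manufacture, from $\mathscr D$ alone, an abstract branching structure on $\R$ realizing $\mathscr D$ as a multiplicity function -- this is where (D2)--(D4) are spent -- and then to realize it as $\bigcup_{j\ge1}2^{-j}W$ by a measurable selection.

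The necessity half is essentially soft once the shift-invariant-space dictionary is in place, (D3) being the only point that needs care. The converse is the heart of the matter: requirements (i)--(iii) pull against one another, and the construction closes only because of (D3), which is exactly the quantitative statement that the support $\Delta$ forced on $V_0$ by $\mathscr D$ carries enough translational redundancy to support a dimension function of size $\mathscr D$. Showing that the inductive step can always be carried out, and that the symmetric-difference limit of the partial sets is a genuine wavelet set whose dimension function is the prescribed $\mathscr D$, is the step I expect to be the main obstacle.
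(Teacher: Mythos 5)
This theorem is quoted in the paper from \cite{BRS} and is not proved here, so I am comparing your proposal against the published argument. Your \emph{necessity} half is essentially the standard (and correct) route: identify $\mathscr D_\psi$ with the fiber dimension of the shift-invariant space of negative dilates $V_0=V(\psi)$, get (D1) from integrality of fiber dimensions, (D2) from $V_1=V_0\oplus W_0$ together with the de-interleaving of fibers under dilation (or, as you note, directly from the series and $\sum_k|\hat\psi(\xi+k)|^2=1$), and (D4) from the $k=0$ terms plus Calder\'on's formula and integer-valuedness. One caveat: the identity $\dim\tilde V_0(\xi)=\mathscr D_\psi(\xi)$ does not follow from the two equations you name alone; one needs that the fiberized generators $\Psi_j(\xi)=(\hat\psi(2^j(\xi+k)))_{k\in\Z}$, $j\ge1$, form a Parseval frame of the fiber, which uses the full set of characterizing equations of an orthonormal wavelet (this is the content of \cite{RS3} and of the corresponding lemma in \cite{BRS}). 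For (D3) your conclusion is right, and there is a cleaner argument than the ``iterated fiber recursion'' you gesture at: if $\hat\psi(2^j(\xi+k))\ne0$ for some $j\ge1$, then for every $i\ge0$ the single term $|\hat\psi(2^{j+i}\cdot 2^{-i}(\xi+k))|^2$ already makes $\mathscr D(2^{-i}(\xi+k))>0$, hence $\ge1$; so every generator of the fiber is supported on $\{k:\xi+k\in\Delta\}$ and $\mathscr D(\xi)\le\sum_k\ch_\Delta(\xi+k)$.

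The \emph{converse} is where the proposal has a genuine gap. You correctly reduce to producing a wavelet set $W$ with $\sum_{k\in\Z}\#\{j\ge1:2^j(\xi+k)\in W\}=\mathscr D(\xi)$, but the ``exhaustion/gobbling'' plan is only a description of what a construction would have to achieve, and you yourself flag the inductive step and the passage to the limit as ``the main obstacle.'' That obstacle is the theorem. The proof in \cite{BRS} does not build $W$ directly by transferring mass between dyadic rings; it first constructs a \emph{generalized scaling set} $S$: using (D3) one selects, measurably and for a.e.\ $\xi\in[0,1)$, exactly $\mathscr D(\xi)$ integers $k$ with $\xi+k\in\Delta$, and the selection must be made consistently across scales so that the resulting set $S$ satisfies $S\subset 2S$, $\sum_k\ch_S(\xi+k)=\mathscr D(\xi)$, and $\lim_{j\to\infty}\ch_S(2^{-j}\xi)=1$ a.e.\ --- here (D2) is what makes the scale-consistency possible and (D4) what forces the limit condition. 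Only then is $W=2S\setminus S$ shown to be a wavelet set with dimension function $\mathscr D$. (Lemma \ref{r3} of the present paper is a special case of this selection mechanism, for $\mathscr D\equiv1$, and gives a sense of the bookkeeping required.) Without carrying out this selection --- in particular without verifying that (D2)--(D4) suffice to keep the inductive choices compatible and that the limit set both translation-tiles with the prescribed multiplicity and dilation-tiles $\R$ --- the converse, which is the substantial half of the theorem, remains unproved.
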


In light of Theorems \ref{con2} and \ref{dim} the affirmative answer to Problem \ref{con} would follow from the following conjecture. Our joint work \cite{BR3} was meant as an initial step toward this conjecture.

\begin{conjecture}
Let $\mathscr D$ be any wavelet dimension function, i.e., $\mathscr D$ satisfies (D1)--(D4). Then, the collection of o.n. wavelets with the same dimension function
\[
\{\psi \in L^2(\R): \psi \text{ is an o.n. wavelet and } \mathscr D_\psi=\mathscr D\}.
\] 
is a path connected subset of $L^2(\R)$.
\end{conjecture}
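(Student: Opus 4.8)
The plan is to reduce the conjecture, in two stages, to the already-known connectivity of wavelet sets (Theorem~\ref{con2}). Fix a wavelet dimension function $\mathscr D$; by the converse part of Theorem~\ref{dim}, choose once and for all a model MSF wavelet $\psi_{\mathscr D}$ with $\mathscr D_{\psi_{\mathscr D}}=\mathscr D$. It then suffices to join every o.n.\ wavelet $\psi$ with $\mathscr D_\psi=\mathscr D$ to $\psi_{\mathscr D}$ by a continuous path lying entirely inside the fiber $F_{\mathscr D}:=\{\psi:\mathscr D_\psi=\mathscr D\}$. Two structural remarks frame the problem. First, by \eqref{dim1} the quantity $\mathscr D_\psi$ depends only on $|\hat\psi|$, so one has total freedom to move the phase of $\hat\psi$ as long as the deformation stays among wavelets; in particular $F_{\mathscr D}$ is invariant under the action of unimodular wavelet multipliers, which can be exploited to normalize phases. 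Second, the constant function $\mathscr D\equiv1$ corresponds exactly (Theorem~\ref{MRA2}) to the MRA wavelets, so there the conjecture is precisely Theorem~\ref{con1}; everything below should be read as an attempt to push the Wutam argument past the MRA case.

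The heart of the matter is a \emph{taming} step: inside $F_{\mathscr D}$, deform $|\hat\psi|$ to the indicator $\ch_W$ of a wavelet set $W$ with dimension function $\mathscr D$. For $\mathscr D\equiv1$ this is carried out in \cite{wutam} by means of the MRA scaling function and its low-pass filter, which furnish a one-parameter family of filters collapsing the given MRA onto the Shannon MRA. In general one would replace the single scaling function by the generalized multiresolution analysis (GMRA) attached to $\psi$, whose multiplicity function equals $\mathscr D$. The structure theory of such GMRAs --- in the spirit of Baggett--Medina--Merrill and of \cite{BR3} --- should provide operator-valued filter data depending continuously on a parameter and degenerating the GMRA of $\psi$ to the wavelet-set GMRA of $\ch_W$; running the wavelet construction over this family yields the required path. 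It is likely convenient to pass through the larger class of Parseval wavelets, where partial isometries replace unitaries and the deformation has more room, and only at the end to verify that the path so produced consists of genuine orthonormal wavelets and that $\mathscr D$ is constant along it.

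Once $\psi$ and $\psi_{\mathscr D}$ have both been driven, inside $F_{\mathscr D}$, to MSF wavelets, it remains to connect the latter. Here one needs a dimension-function-preserving strengthening of Speegle's Theorem~\ref{con2}: the wavelet sets realizing a prescribed $\mathscr D$ should form a path-connected family in the symmetric difference metric. Speegle's interpolation moves connect arbitrary wavelet sets and need not keep the periodization $\sum_{j\ge1}\sum_{k\in\Z}\ch_W(2^j(\xi+k))$ fixed, so one must isolate the subclass of moves that preserve it; the rigidity built into (D2) and (D3) should still leave enough such moves to connect the relevant family.

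The principal obstacle is the taming step: producing an explicit homotopy from a general orthonormal wavelet to an MSF wavelet that holds the dimension function constant \emph{without} the aid of a single scaling function. Having to work with operator-valued (multiplicity-function) data, and to certify that the intermediate objects stay orthonormal wavelets rather than becoming merely Bessel or Parseval systems, is exactly the difficulty that has kept Problem~\ref{con} open; this is why \cite{BR3} is, so far, only an initial step.
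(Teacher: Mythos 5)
This statement is labeled a \emph{conjecture} in the paper, and the paper offers no proof of it; the authors explicitly describe \cite{BR3} as only ``an initial step toward this conjecture.'' Your proposal is therefore not being measured against an existing argument, and on its own terms it is a research program, not a proof. You candidly acknowledge as much, but it is worth naming precisely where the two unproven steps lie. First, the ``taming step'': you need a path, staying inside the fiber $\{\psi : \mathscr D_\psi = \mathscr D\}$ and consisting entirely of orthonormal wavelets, from an arbitrary $\psi$ in that fiber to an MSF wavelet. The Wutam argument \cite{wutam} for $\mathscr D \equiv 1$ leans entirely on the existence of a single scaling function and a scalar low-pass filter that can be interpolated to the Shannon filter; for a general multiplicity function the GMRA of Baggett--Medina--Merrill has vector-valued filter data whose ``degeneration to the wavelet-set GMRA'' is exactly what nobody knows how to do while certifying orthonormality (rather than mere Parseval or Bessel behavior) at every intermediate time. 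No construction is offered, only the assertion that the structure theory ``should provide'' it. Second, even after both endpoints are MSF, Speegle's Theorem~\ref{con2} connects arbitrary wavelet sets but gives no control on the dimension function along the path; the dimension-function-preserving refinement you invoke is itself an open strengthening of \cite{S}, and it is not clear that the interpolation moves used there can be restricted to preserve the periodized sum $\sum_{j\ge 1}\sum_{k\in\Z}\ch_W(2^j(\xi+k))$.

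One smaller point: your remark that $\mathscr D_\psi$ depends only on $|\hat\psi|$ is correct and does give phase freedom, but it does not by itself show that the fiber is invariant under arbitrary unimodular multipliers, since multiplying $\hat\psi$ by a unimodular function generally destroys the orthonormal wavelet property; only the genuine wavelet multipliers (a much more rigid class) act on the fiber. In summary, the reduction scheme you describe (tame to MSF inside the fiber, then connect wavelet sets with fixed dimension function) is a reasonable and natural strategy consistent with how Theorems~\ref{con1}, \ref{con2}, and \ref{dim} are positioned in the paper, but both of its constituent steps remain open, so the conjecture is not proved.
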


Variants of Problem \ref{con} have been studied for other classes of wavelets such as Parseval wavelets. We say that $\psi \in L^2(\R)$ is a Parseval wavelet if its wavelet system is a Parseval frame. That is, 
\[
\sum_{j,k\in \Z} |\langle f,\psi_{j,k}\rangle |^2 = ||f||^2 \qquad\text{for all }f\in L^2(\R).
\]
This problem also remains open in its full generality. 
Paluszy\'nski, \v Siki\'c, Weiss, and Xiao showed the connectivity for the class of MRA Parseval wavelets \cite{PSWX1, PSWX2}, which is an extension of Theorem \ref{con1}. Moreover,
Garrig{\'o}s, Hern{\'a}ndez, {\v{S}}iki{\'c}, Soria, Weiss, and Wilson showed that the class of Parseval wavelets satisfying very mild conditions on their spectrum is also connected \cite{GHSSWW, GHSS}. Likewise, a variant of Problem \ref{con} for Riesz wavelets, which was posed by Larson \cite{La1, La2}, is also open. However, the same problem for frame wavelets was solved by the first author \cite{B07}.

A frame wavelet, or in short a framelet, is a function $\psi \in L^2(\R)$ such that the wavelet system \eqref{wavelet0}
forms a frame for $L^2(\R)$. Hence, we require the existence of constants $0<c \le d < \infty$ such that
\begin{equation}\label{fr}
c ||f||^2 \le \sum_{j,k\in\Z} |\langle f, \psi_{j,k}\rangle|^2 \le d ||f||^2
\qquad\text{for all }f\in L^2(\R).
\end{equation}
We say that a wavelet system is Bessel if only the upper bound holds in (\ref{fr}). Then we have the following result \cite[Theorem 3.1]{B07}.

\begin{theorem}\label{con3}
The collection of all framelets 
\[
\mathcal W_{f}= \{ \psi \in L^2(\R): \psi \text{ is a framelet} \}.
\]
is path connected in $L^2(\R)$.
\end{theorem}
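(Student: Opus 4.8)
The plan is to prove path connectedness by showing that every framelet can be joined, inside $\mathcal W_{f}$, to one fixed framelet; I take the reference to be the Shannon wavelet $\psi_{S}$, $\hat\psi_{S}=\ch_{W_{0}}$ with $W_{0}=[-1,-1/2)\cup[1/2,1)$, which is an orthonormal, hence frame, wavelet. Throughout I would track the frame bounds of a moving generator $\psi_{s}$ via the classical quantities $\Phi_{\psi}(\xi)=\sum_{j\in\Z}|\hat\psi(2^{j}\xi)|^{2}$ and $t_{q}(\xi)=\sum_{j\ge0}\hat\psi(2^{j}\xi)\,\ov{\hat\psi(2^{j}(\xi+q))}$ ($q$ odd); these decide whether $\{\psi_{j,k}\}$ is a frame and with what bounds, so the task at each stage is to keep $\Phi$ between two fixed positive constants and the off-diagonal residual built from the $t_{q}$'s small along the whole path.

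\emph{Stage 1: regularization.} Since $\Phi_{\psi}\le d$ a.e., $\hat\psi$ tends to $0$ along almost every dyadic orbit near $0$ and near $\infty$. I would exploit this to taper $\hat\psi$ very slowly onto a fixed dyadic annulus $A_{n}=\{2^{-n}\le|\xi|\le2^{n}\}$ and to truncate its modulus at a high level, choosing the taper slow enough and $n$ large enough that $\Phi$ and the $t_{q}$-residual stay controlled at every parameter value. This produces a path in $\mathcal W_{f}$ from $\psi$ to a framelet $\varphi$ with $\hat\varphi$ bounded and $\supp\hat\varphi\subset A_{n}$. The gain is that the frame condition for such $\varphi$ is now ``fiberwise finite-dimensional'': only finitely many dyadic scales interact, and $c\le\Phi\le d$ together with the $t_{q}$'s says exactly that a measurable field of finite Gram-type matrices $G(\xi)$ has spectrum in a fixed compact subinterval of $(0,\infty)$.

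\emph{Stages 2--3: to a wavelet set, then Speegle.} In this finite-dimensional regime there is room to move. I would steer $\varphi$, staying among framelets supported in $A_{n}$, to a generator of the form $\ch_{W}$ for a wavelet set $W$: first normalize the modulus and the unimodular phase (the phase being a framelet multiplier), reaching a framelet supported in a set $E$ that covers $\R$ under the dilations $\xi\mapsto2^{j}\xi$ (this covering is forced by $\Phi\ge c>0$); then realize, as a continuous path rather than a one-shot construction, the Dai--Larson--Speegle-type surgery that carves a wavelet set out of a bounded dilation-covering set, keeping the spectrum of the fiber Gram matrices $G(\xi)$ inside a fixed compact subinterval of $(0,\infty)$ at every step. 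Once $\varphi$ has been deformed to an MSF wavelet, Theorem~\ref{con2} joins it to $\psi_{S}$ inside the MSF wavelets, hence inside $\mathcal W_{f}$, and concatenating all the paths completes the proof.

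\emph{Main obstacle.} The genuine difficulty is that, unlike orthonormality or the Parseval property, the frame inequality \eqref{fr} is \emph{not} stable under arbitrary $L^{2}$-small perturbations of $\psi$: a perturbation of tiny norm can have an enormous, even infinite, Bessel constant --- e.g.\ by spreading a little mass across many dyadic scales --- so $\mathcal W_{f}$ is not open in the unit sphere and ``small moves'' are not automatically safe; and one cannot shortcut through the canonical dual, since the wavelet frame operator $S$ commutes with dyadic dilation but not with integer translation, so $\{S^{-1/2}\psi_{j,k}\}$ need not be a wavelet system at all. Consequently the homotopies of Stages 1--3 must be built by hand, with $\Phi$ and the $t_{q}$-residual verified to stay within two-sided bounds along each entire path; the hardest single estimate is the one in Stage 1 --- the slow tapering and truncation of a completely general $\hat\psi$ --- since it is precisely what legitimizes the reduction to the finite-dimensional regime in which Stages 2--3 have room to run.
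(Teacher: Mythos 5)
This theorem is not proved in the paper at all; it is quoted from \cite[Theorem 3.1]{B07}, so your attempt has to be judged against that argument. As it stands, your proposal has a genuine gap, and it is located exactly where you place the whole weight of the construction: Stage 1. You correctly identify that the frame property is not stable under $L^2$-small perturbations, but the ``slow taper onto a dyadic annulus $A_n$'' is still only an $L^2$-smallness argument. To keep the lower frame bound along the tapering path you would need the discarded part $g_n$, $\hat g_n=\hat\psi\,\ch_{A_n^c}$ (or its tapered version), to have small \emph{Bessel bound}, and the inequality $\sum_j|\hat\psi(2^j\xi)|^2\le d$ only gives pointwise, not uniform, decay along dyadic orbits. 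In fact Stage 1 fails outright for some framelets: take $\hat\psi=\ch_W$ with $W$ an unbounded wavelet set. Any $\varphi$ obtained by multiplying $\hat\psi$ by a taper supported in $A_n$ has $\sum_j|\hat\varphi(2^j\xi)|^2=0$ on the positive-measure set $\bigcup_j 2^{-j}(W\setminus A_n)$, so $\varphi$ violates the necessary lower bound $\sum_j|\hat\varphi(2^j\xi)|^2\ge c$ and is not a framelet, no matter how slowly you taper or how large you take $n$. So the reduction to the band-limited, ``fiberwise finite-dimensional'' regime---on which Stages 2--3 entirely depend---cannot be achieved by deforming $\hat\psi$ multiplicatively; one would first have to refill the dilation orbits that the truncation empties, which is a different (and unaddressed) construction.

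There are two further problems. In Stage 2 the parenthetical ``the phase being a framelet multiplier'' is unjustified: multiplying $\hat\varphi$ by an arbitrary unimodular function preserves the Calder\'on sum but alters the off-diagonal terms $t_q$, and the unimodular functions that preserve the (frame) wavelet class form a proper subclass (essentially those $\nu$ with $\nu(2\xi)\overline{\nu(\xi)}$ $1$-periodic); $e^{-i\arg\hat\varphi}$ has no reason to lie in it. And the ``continuous Dai--Larson--Speegle surgery'' of Stage 3, keeping the fiber Gram spectra in a fixed compact subinterval of $(0,\infty)$ throughout, is the entire difficulty in the band-limited case and is only asserted. For comparison, the argument of \cite{B07} does not route through Speegle's Theorem~\ref{con2} or through support surgery at all. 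Its engine is the same one behind the density result (Theorem~\ref{den3}): generators $\eta$ with $\hat\eta=t\,\ch_{2^{-N}W}$ ($W$ the Shannon set, $N\ge 1$) are \emph{tight} framelets with frame bound $t^2$ but $L^2$-norm $t\,2^{-N/2}$, so frame bounds are decoupled from norm; paths are then assembled from segments along which the frame inequalities are verified directly via estimates on analysis operators of the form $\|T_{\psi+g}f\|\ge\|T_\psi f\|-\|T_g f\|$ and suitable disjointness of the systems involved. That route avoids precisely the uniformity problem on which your Stage 1 founders.
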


\subsection{Wavelets for $H^2(\R)$}

Auscher in his influential work \cite{Au} has solved two problems on wavelets. He has shown that all biorthogonal wavelets satisfying mild regularity conditions come from biorthogonal MRAs. In particular, we have the following result \cite[Theorem 1.2]{Au}.

\begin{theorem}\label{au1}
Let $\psi \in L^2(\R)$ be an o.n. wavelet such that:
\begin{itemize}
\item $\hat \psi$ is continuous on $\R$,
\item $|\hat \psi(\xi)| = O((1+|\xi|)^{-\alpha-1/2})$ as $|\xi|\to \infty$ for some $\alpha>0$.
\end{itemize}
Then, $\psi$ is an MRA wavelet.
\end{theorem}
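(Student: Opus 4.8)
The plan is to use the characterization of MRA wavelets from Theorem~\ref{MRA2}, namely to show that under the stated decay and continuity hypotheses the dimension function $\mathscr D_\psi$ is identically $1$ a.e. So I would start from the series
\[
\mathscr D_\psi(\xi)=\sum_{j=1}^\infty\sum_{k\in\Z}|\hat\psi(2^j(\xi+k))|^2
\]
and estimate its tail. The hypothesis $|\hat\psi(\xi)|=O((1+|\xi|)^{-\alpha-1/2})$ gives $|\hat\psi(\xi)|^2=O((1+|\xi|)^{-2\alpha-1})$, which is exactly the borderline summability needed to control the sum over $k\in\Z$ of $|\hat\psi(2^j(\xi+k))|^2$ and, after summing the geometric-like factor $2^{-2\alpha j}$, the sum over $j\ge 1$ as well. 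So the first step is a Weierstrass $M$-test argument showing that the double series converges locally uniformly on $\R\setminus(\tfrac12\Z)$, or more carefully on compact sets avoiding the dyadic rationals where $\hat\psi$ might not be small; combined with continuity of $\hat\psi$, this makes $\mathscr D_\psi$ continuous off a countable set.

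Next I would invoke the known general facts about any orthonormal wavelet: the dimension function is integer-valued and $1$-periodic (this is (D1) of Theorem~\ref{dim}), and more basically $\int_0^1\mathscr D_\psi=1$, which follows from the two equations $\sum_{j\in\Z}|\hat\psi(2^j\xi)|^2=1$ a.e.\ and $\sum_{k\in\Z}|\hat\psi(\xi+k)|^2 \le$ the relevant equation, or simply from the fact that $\mathscr D_\psi$ for the Shannon/Haar-type normalization integrates to $1$. Since $\mathscr D_\psi$ is a nonnegative integer a.e.\ with integral $1$ over a period, it equals $1$ a.e.\ as soon as we rule out it taking the value $0$ on a set of positive measure. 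So the heart of the matter is: show $\mathscr D_\psi(\xi)\ge 1$ for a.e.\ $\xi$. This is precisely condition (D4), but here I want it to hold \emph{everywhere} off a small set, using regularity.

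For that step I would use the equation $\sum_{j\in\Z}|\hat\psi(2^j\xi)|^2=1$ (valid a.e., and by continuity for every $\xi\ne 0$), which splits as $\mathscr D_\psi(\xi) + \sum_{j\le 0}|\hat\psi(2^j\xi)|^2 = 1$ when we take $\xi$ in the fundamental domain, and argue that the "negative" part $\sum_{j\le 0}|\hat\psi(2^j\xi)|^2$ is small: as $j\to-\infty$, $2^j\xi\to 0$, and one needs a lower-frequency estimate. The decay hypothesis alone controls $\hat\psi$ near infinity, not near $0$; the control near $0$ comes from the orthonormality/wavelet equations together with continuity — specifically $\hat\psi(0)=0$ for any integrable wavelet, and continuity forces $\hat\psi$ to be uniformly small on a neighborhood of $0$, so the sum $\sum_{j\le 0}|\hat\psi(2^j\xi)|^2$ over a bounded range of $\xi$ has its tail (large negative $j$) uniformly small, while each individual term is bounded by $1$. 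Making this quantitative and uniform is where the real work lies.

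The main obstacle I expect is exactly this last point: proving that $\mathscr D_\psi$ cannot drop to $0$, i.e.\ translating the local regularity of $\hat\psi$ near the origin into a genuine uniform lower bound on $\mathscr D_\psi$. Auscher's actual argument is more subtle than the sketch above — it uses the periodization/Tauberian structure of the wavelet equations (the two equations $\sum_k|\hat\psi(\xi+k)|^2=1$ a.e.\ on an appropriate set, together with the low-pass filter relations) and exploits that $\hat\psi$ continuous with $\hat\psi(0)=0$ plus the decay forces the "$V_0$ space" to be genuinely generated by a scaling function. So a cleaner route, and the one I would ultimately follow, is: build the candidate scaling function directly by $|\hat\varphi(\xi)|^2 = \sum_{j\ge1}|\hat\psi(2^j\xi)|^2 = \mathscr D_\psi(\xi)$ (possible because by the above this is $\le 1$ and, one shows, continuous), verify $\sum_k|\hat\varphi(\xi+k)|^2=1$ and the two-scale relation using the wavelet equations, and conclude the $V_j$'s so defined form an MRA with $\psi\in V_1\ominus V_0$; the continuity and decay of $\hat\psi$ are what guarantee $\hat\varphi$ is well-defined and the series manipulations are legitimate, and (M3),(M4) follow from (D3),(D4) applied in this regular setting.
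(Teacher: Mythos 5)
Your first paragraph is exactly the paper's proof, and it is essentially complete: the decay $|\hat\psi(\xi)|^2=O((1+|\xi|)^{-1-2\alpha})$ makes the double series defining $\mathscr D_\psi$ converge uniformly on compact subsets of $\R\setminus\Z$ (the only points that must be avoided are those where $\xi+k=0$ for some $k\in\Z$, since the hypothesis controls $\hat\psi$ only at infinity), so by continuity of $\hat\psi$ the function $\mathscr D_\psi$ is continuous on the interval $(0,1)$. But you then walk away from the payoff of this observation. A continuous function on the connected set $(0,1)$ that agrees a.e.\ with an integer-valued function (condition (D1) of Theorem~\ref{dim}) must be a.e.\ equal to a single constant $c\in\N\cup\{0\}$, and the elementary Fubini computation $\int_0^1\mathscr D_\psi(\xi)\,d\xi=\sum_{j\ge 1}2^{-j}\|\hat\psi\|^2=1$ forces $c=1$; Theorem~\ref{MRA2} then finishes the proof. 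In particular the step you single out as ``the heart of the matter'' --- ruling out $\mathscr D_\psi=0$ on a set of positive measure by controlling $\sum_{j\le 0}|\hat\psi(2^j\xi)|^2$ near the origin --- is not needed at all: a constant with integral $1$ cannot vanish. Note also that your hedge about restricting to compact sets ``avoiding the dyadic rationals'' would be fatal if it were actually necessary, since $\R$ minus a dense countable set is totally disconnected and continuity there would no longer force constancy; fortunately only the closed discrete set $\Z$ has to be excised.

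The alternative route you say you would ``ultimately follow'' does not work as written. First, $\sum_{j\ge1}|\hat\psi(2^j\xi)|^2$ is not equal to $\mathscr D_\psi(\xi)$: the dimension function is the $\Z$-periodization $\sum_{k\in\Z}\sum_{j\ge1}|\hat\psi(2^j(\xi+k))|^2$ of that expression, which by itself is not periodic. Second, and more seriously, the route is circular: if you define the candidate scaling function by $|\hat\varphi(\xi)|^2=\sum_{j\ge1}|\hat\psi(2^j\xi)|^2$, then the orthonormality condition $\sum_{k\in\Z}|\hat\varphi(\xi+k)|^2=1$ that you must verify for $\varphi$ to generate $V_0$ is precisely the statement $\mathscr D_\psi=1$ a.e.\ that the whole argument is trying to prove, so the construction presupposes its own conclusion. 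Stick with your first paragraph and close it with the continuity-plus-integrality argument above.
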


The original formulation in \cite{Au} has one more condition, $|\hat \psi(\xi)| = O(|\xi|^\alpha)$ as $\xi \to 0$, which is not essential. The proof of Theorem \ref{au1} is actually not that difficult in light of Theorem \ref{MRA2}. It suffices to observe that the regularity conditions imply that series defining the wavelet dimension function \eqref{dim1} is uniformly convergent on compact subsets of $\R \setminus \Z$. Since $\mathscr D$ is integer-valued and periodic, it must be a constant function (equal to 1).

The other problem solved by Auscher deals with the Hardy space 
$$H^2(\R)= \{f\in L^2(\R): \hat f(\xi) = 0
\quad\text{for }\xi \le 0\}.$$
Meyer \cite{Me} has shown the existence of o.n. wavelets in the Schwartz class. His famous construction produces a band-limited wavelet $\psi$ such that $\hat \psi \in C^\infty$ has compact support. He has asked if it is possible to such nice wavelets also in the Hardy space $H^2(\R)$. Auscher \cite[Theorem 1.1]{Au} has shown that this is not possible, see also \cite[Theorem 7.6.20]{HW}.

\begin{theorem} There is no o.n. wavelet $\psi \in H^2(\R)$ satisfying the regularity assumptions as in Theorem \ref{au1}. In particular, there is no $\psi$ in the Schwartz class such that $\{\psi_{j,k}\}_{j,k\in \Z}$ is an o.n. basis of $H^2(\R)$.
\end{theorem}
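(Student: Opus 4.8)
The plan is to derive a contradiction from the existence of such a wavelet by analyzing the wavelet dimension function $\mathscr D_\psi$ together with the regularity and support constraints. Suppose $\psi \in H^2(\R)$ is an o.n. wavelet with $\hat\psi$ continuous and $|\hat\psi(\xi)| = O((1+|\xi|)^{-\alpha - 1/2})$ for some $\alpha > 0$. Since $\psi \in H^2(\R)$, we have $\supp \hat\psi \subset [0,\infty)$, and in particular $\hat\psi$ vanishes on $(-\infty,0]$. By the argument sketched after Theorem \ref{au1}, the regularity hypotheses force the series defining $\mathscr D_\psi$ in \eqref{dim1} to converge uniformly on compact subsets of $\R \setminus \Z$; since $\mathscr D_\psi$ is integer-valued, $1$-periodic, and (by Theorem \ref{dim}, condition (D4)) eventually at least $1$ along dyadic shrinkings, continuity off $\Z$ forces $\mathscr D_\psi \equiv 1$ a.e. Hence by Theorem \ref{MRA2}, $\psi$ must be an MRA wavelet.

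Next I would exploit the MRA structure to locate a scaling function $\varphi$ generating $V_0$, and study its Fourier transform $\hat\varphi$ near the origin. The standard MRA identities give $|\hat\varphi(\xi)|^2 = \sum_{j=1}^\infty |\hat\psi(2^j\xi)|^2$, so the regularity of $\hat\psi$ makes $|\hat\varphi|$ continuous, and one shows $|\hat\varphi(0)| = 1$ (this is the usual normalization, following from $\overline{\bigcup V_j} = L^2$). On the other hand, since $\hat\psi$ is supported in $[0,\infty)$, every term $|\hat\psi(2^j\xi)|^2$ vanishes for $\xi < 0$, so $\hat\varphi(\xi) = 0$ for all $\xi < 0$. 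This contradicts the continuity of $\hat\varphi$ at $0$ together with $|\hat\varphi(0)| = 1$: we would need $\hat\varphi(\xi) \to \hat\varphi(0)$ with $|\hat\varphi(0)| = 1$ as $\xi \to 0^-$, yet $\hat\varphi(\xi) \equiv 0$ on the left. This is the desired contradiction, and it establishes the first assertion.

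For the second assertion, I would note that a Schwartz-class $\psi$ trivially satisfies the hypotheses of Theorem \ref{au1} (continuity and rapid decay of $\hat\psi$), so if $\{\psi_{j,k}\}$ were an o.n. basis of $H^2(\R)$ we could apply the first part. One subtlety: the notion of ``MRA wavelet'' and the dimension function identity were stated for wavelets of $L^2(\R)$, not $H^2(\R)$, so I would either invoke the known analogue of Theorem \ref{MRA2} for $H^2(\R)$ (as in \cite[Theorem 7.6.20]{HW}) or, more elementarily, observe that the Hardy-space Calder\'on/dimension-function calculation goes through verbatim because all Fourier supports live in $[0,\infty)$; the periodization and dilation sums are unchanged. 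The main obstacle is precisely this bookkeeping step: making the $H^2$ versions of the MRA characterization and the scaling-function normalization precise, and confirming that ``$|\hat\varphi(0)| = 1$'' still holds (or its appropriate replacement), since that is the fact that clashes irreconcilably with one-sided frequency support. Everything else is a direct transcription of the $L^2$ theory already recorded in Theorems \ref{MRA2} and \ref{au1}.
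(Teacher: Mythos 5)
The paper itself does not prove this theorem; it only cites Auscher \cite{Au} and \cite[Theorem 7.6.20]{HW}. Your overall strategy --- use the regularity to force $\mathscr D_\psi\equiv 1$, pass to an $H^2$-MRA with scaling function $\varphi$ whose Fourier transform is supported in $[0,\infty)$, and derive a contradiction at the origin --- is indeed the strategy of the cited proof. But the step that is supposed to produce the contradiction has a genuine gap and, as written, is internally inconsistent. You claim that $|\hat\varphi(\xi)|^2=\sum_{j=1}^\infty|\hat\psi(2^j\xi)|^2$ together with the regularity of $\hat\psi$ makes $|\hat\varphi|$ continuous, and separately that $|\hat\varphi(0)|=1$. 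The decay hypothesis $|\hat\psi(\xi)|=O((1+|\xi|)^{-\alpha-1/2})$ only controls the tail of this series when $\xi$ stays away from $0$; near $\xi=0$ the convergence is not uniform, and in fact the function $\xi\mapsto\sum_{j\ge1}|\hat\psi(2^j\xi)|^2$ is genuinely discontinuous at the origin for every regular wavelet: its pointwise value at $\xi=0$ is $\sum_{j\ge1}|\hat\psi(0)|^2\in\{0,+\infty\}$, never $1$ (for the Meyer wavelet it is $0$ while the limit from either side is $1$). So this formula cannot give you continuity of $|\hat\varphi|$ at $0$, and if it did, the value there would be $0$, not $1$.

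The missing idea is to transfer information across $\xi=0$ via the periodization identity $\mathscr D_\psi(\xi)=\sum_{k\in\Z}|\hat\varphi(\xi+k)|^2=1$ a.e. The sum over $k\ne 0$ converges uniformly near $\xi=0$ (this is where the decay at infinity and the continuity of $\hat\psi$ away from the origin are genuinely used), hence is continuous there; since $\hat\varphi$ vanishes on $(-\infty,0)$, that sum equals $1$ on $(-1,0)$, hence equals $1$ at $\xi=0$, and therefore $|\hat\varphi(\xi)|^2=1-\sum_{k\ne0}|\hat\varphi(\xi+k)|^2\to 0$ as $\xi\to 0^+$. This is what contradicts completeness, i.e.\ the $H^2$ analogue of (F2), which forces $\lim_{j\to\infty}|\hat\varphi(2^{-j}\xi)|=1$ for a.e.\ $\xi>0$. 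In short, the true contradiction is not ``$|\hat\varphi(0)|=1$ versus vanishing on the left'' but ``$|\hat\varphi|\to 0$ from the right (forced by one-sided support plus periodization) versus $|\hat\varphi(2^{-j}\xi)|\to 1$ (forced by density of $\bigcup V_j$)''. Your concerns about the $H^2$ versions of Theorem \ref{MRA2} and of the scaling-function identities are legitimate but routine; the argument you flag as the heart of the proof is precisely the part that needs to be replaced by the periodization argument above.
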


This leaves open the problem of existence of Riesz wavelets which was posed by Seip \cite{Seip}. We say that $\psi$ is a Riesz wavelet for $\mathcal H =H^2(\R)$ or $L^2(\R)$ if the wavelet system is a Riesz basis of $\mathcal H$. A Riesz basis in a Hilbert space $\mathcal H$ can be defined as an image of an orthonormal basis under an invertible operator on $\mathcal H$. Every Riesz basis has a dual Riesz basis. However, the dual of Riesz wavelet system might not be a wavelet system. If it is, then we say that $\psi$ is a biorthogonal (Riesz) wavelet.

\begin{problem}\label{seip} Does there exist a Riesz wavelet $\psi$ in $H^2(\R)$
 such that $\psi$ belongs to the Schwartz class?
\end{problem}

Auscher \cite{Au} has shown that the answer is negative for biorthogonal Riesz wavelets. However, Auscher's result does not preclude the existence of more general types of Riesz wavelets for which wavelet dimension techniques are not applicable.

\subsection{Minimality of MSF wavelets}

Larson \cite{La2} has posed an interesting problem about frequency supports of wavelets. Must the support of the Fourier transform of a wavelet contain a wavelet set? This problem stems from the observation that there are two ways of describing minimality of frequency support. The first one is that $\supp \hat \psi$ has the smallest possible Lebesgue measure (equal to 1), which is used in the actual definition of an MSF wavelet. The second possibility is to insist that the support is minimal with respect to the inclusion partial order. It is not known whether these two natural definitions of minimality of frequency supports are the same. This is the essence of the following problem posed by Larson in late 1990's although its official formulation appeared only in \cite{La2}.

\begin{problem}\label{larson}
Is it true that for any orthonormal wavelet $\psi \in L^2(\R)$, there exists a wavelet set $W$ such that such that $W \subset \supp \hat\psi$?
\end{problem}

A positive answer to this problem was given by the second author \cite{Rz0} for the class of MRA wavelets. A special case of Theorem \ref{rze} for band-limited MRA wavelets was shown in \cite{ZJ}. 

\begin{theorem}\label{rze}
Suppose that $\psi\in L^2(\R)$ is an MRA wavelet. Then there exists a wavelet set $W$ such that $W \subset \supp \hat \psi$.
\end{theorem}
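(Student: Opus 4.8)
The plan is to exploit the MRA structure via the wavelet dimension function and Theorem~\ref{MRA2}, which tells us that $\mathscr D_\psi(\xi)=1$ for a.e.\ $\xi$. First I would recall the standard equalities that any o.n.\ wavelet $\psi$ satisfies: the two ``Calder\'on'' conditions $\sum_{j\in\Z}|\hat\psi(2^j\xi)|^2=1$ a.e., and the ``$t_q$'' conditions, of which the $q=0$ case reads $\sum_{j\ge 0}\hat\psi(2^j\xi)\overline{\hat\psi(2^j(\xi+2k))}=0$ a.e.\ for every odd integer, but more usefully here I want the interplay of the periodization function $p(\xi)=\sum_{k\in\Z}|\hat\psi(\xi+k)|^2$ with $\mathscr D_\psi$. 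For an MRA wavelet one has the scaling function $\varphi$ with $|\hat\varphi(\xi)|^2=\sum_{j\ge 1}|\hat\psi(2^j\xi)|^2=\mathscr D_\psi(\xi)$ (the last equality being exactly the content of Theorem~\ref{MRA2} unravelled, so $|\hat\varphi|\le 1$ and $\supp\hat\varphi$ is, up to null sets, the set $\Delta=\{\xi:\mathscr D_\psi(2^{-j}\xi)\ge 1\ \forall j\ge 0\}$ appearing in (D3)). The idea is then to build the wavelet set $W$ \emph{inside} $\supp\hat\psi$ by a region-by-region selection that respects both the dilation and translation tiling constraints, using the fact that $\Delta$ multiplicatively tiles $\R$ by powers of $2$ and (by (D3) with $\mathscr D=1$) $\Delta$ additively sub-tiles by integer translates.

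Concretely, I would proceed as follows. Step one: decompose $\R$ (mod null sets) into the dilation orbits; since $\{2^j\Delta\}_{j\in\Z}$ covers $\R$ and $\Delta$ is ``star-shaped toward $0$'' under halving, a standard argument produces a multiplicative fundamental domain $E\subset\Delta$ with $\{2^jE\}_{j\in\Z}$ a partition of $\R\setminus\{0\}$. Step two: within each translation cell $[n,n+1)$, the set $\supp\hat\psi$ meets possibly several dyadic scales; I want to pick, for a.e.\ $\xi$ in a fundamental domain of $\Z$, exactly one point from the orbit $\{\xi+k:k\in\Z\}\cap\supp\hat\psi$ at the ``right'' dyadic scale so that the chosen set is simultaneously a translation tile. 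The selection must be coordinated with Step one so that the final $W$ is the common refinement; here is where the equation (D2) with $\mathscr D\equiv 1$, namely $1+1=1+1$ trivially, is not enough, and one really uses that $\mathscr D_\psi\equiv 1$ forces $p(\xi)=\mathbf 1_{\Delta^c}(\xi)+(\text{something})$—more precisely the known identity $p_{\hat\psi}(\xi)+\mathscr D_\psi(\xi)-\mathscr D_\psi(2\xi)$ relations—to control overlaps. Rather than these bookkeeping identities, the cleanest route is probably to invoke Theorem~\ref{rze}'s hypothesis to reduce to the scaling function: it is classical (Lemma in \cite{Rz0}-type arguments, or the theory of ``$\mathscr E$-functions'') that $\supp\hat\varphi\subset\supp\hat\psi\cup 2\supp\hat\psi\cup\cdots$ is governed by the low-pass filter $m_0$, so that $\supp\hat\psi$ already ``sees'' a full generating region, and then one selects $W$ as a measurable cross-section.

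The key technical engine I would set up is a \emph{measurable selection lemma}: given the multiplicatively-$2$-invariant cover by $\Delta$ and the integer-translation structure, there is a measurable $W\subset\supp\hat\psi$ with $\sum_{j}\mathbf 1_W(2^j\xi)=1$ and $\sum_k\mathbf 1_W(\xi+k)=1$ a.e. To get this I would work on the quotient $\R/\sim$ where $\sim$ is generated by $\xi\mapsto 2\xi$ and $\xi\mapsto\xi+1$—this is essentially the ``abstract'' wavelet-set existence argument (Dai--Larson--Speegle style) but performed relative to the constraint set $\supp\hat\psi$; one checks that the hypotheses (the two tiling-type conditions restricted to $\supp\hat\psi$) hold precisely because $\mathscr D_\psi\equiv 1$ gives, after summing the Calder\'on identity, that $\supp\hat\psi$ $2$-dilation-covers $\R$ and integer-translation-covers $\R$ with the covering being ``tight enough'' (multiplicity controlled by $\mathscr D_\psi$ and $p_{\hat\psi}$) to extract a cross-section.

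The main obstacle, I expect, is exactly this coordination step: ensuring the \emph{same} set $W$ is simultaneously a dilation tile and a translation tile while staying inside $\supp\hat\psi$. Dilation tiles inside $\supp\hat\psi$ are easy (slice the $\log_2$-direction), and translation tiles inside $\supp\hat\psi$ are easy (slice the integer-orbit direction), but intersecting the two requirements is genuinely a combinatorial/measure-theoretic fixed-point problem; the MRA hypothesis enters precisely to guarantee solvability, via $\mathscr D_\psi\equiv 1$, which says the translation-multiplicity of the ``high-frequency part'' $\bigcup_{j\ge 1}2^j\supp\hat\psi$ is controlled. I would handle it by an iterative refinement (or a Zorn/maximality argument on partial cross-sections), with each step fixing overlaps at one dyadic scale and the dimension-function identity (D2)--(D3) with $\mathscr D\equiv 1$ guaranteeing the process does not get stuck and terminates (mod null sets) with a genuine wavelet set $W\subset\supp\hat\psi$.
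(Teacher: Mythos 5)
Your outline correctly identifies the MRA hypothesis ($\mathscr D_\psi\equiv 1$, the scaling function $\varphi$ with $\sum_{j\ge1}|\hat\psi(2^j\xi)|^2=|\hat\varphi(\xi)|^2$) as the engine, but the proof itself is missing: everything rests on a ``measurable selection lemma'' producing a set $W\subset\supp\hat\psi$ that is simultaneously a dilation tile and a translation tile, and you explicitly defer its proof to ``an iterative refinement (or a Zorn/maximality argument)'' that ``does not get stuck.'' That coordination step is precisely the content of the theorem, and the covering properties you propose to feed into it cannot suffice on their own: \emph{every} orthonormal wavelet satisfies $\sum_j|\hat\psi(2^j\xi)|^2=1$ and $\sum_k|\hat\psi(\xi+k)|^2=1$ a.e., so $\supp\hat\psi$ always dilation-covers and translation-covers $\R$, yet for general (non-MRA) wavelets the statement is exactly the open Problem~\ref{larson}. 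So the MRA hypothesis must enter through a concrete mechanism, not merely as a multiplicity bound. Two smaller errors in your setup: $\sum_{j\ge1}|\hat\psi(2^j\xi)|^2$ equals $|\hat\varphi(\xi)|^2$, not $\mathscr D_\psi(\xi)$ (the dimension function is the $\Z$-periodization of $|\hat\varphi|^2$); and for an MRA wavelet $\mathscr D_\psi\equiv1$ gives $\Delta=\R$, so $\Delta$ does not identify $\supp\hat\varphi$.

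The paper resolves the coordination problem by a different reduction, which you should compare with your plan. First, it suffices to find a \emph{scaling set} $S\subset\supp\hat\varphi$, i.e.\ a set with $S\subset 2S$, $\ch_S(2^{-j}\xi)\to1$, and $\sum_k\ch_S(\xi+k)=1$: then $W=2S\setminus S$ is automatically a wavelet set, so only \emph{one} tiling constraint (S3) has to be enforced, the dilation structure being carried by the nesting $S\subset2S$. This scaling set is built explicitly in Lemma~\ref{r3}: one extracts $K\subset\supp\hat\varphi$ with $\sum_k\ch_K(\xi+k)=1$ and sets $S=\bigcup_{n\ge0}E_n$ with $E_n=2^{-n}K\setminus\bigcup_{j>n}\bigl((2^{-j}K)^P\setminus2^{-j}K\bigr)$, Lemma~\ref{r2} controlling the losses. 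Second --- and this is a step your outline never confronts --- one must still show $2S\setminus S\subset\supp\hat\psi$, which is \emph{not} automatic: $\hat\psi(\xi)$ can vanish at points where $\hat\varphi(\xi/2)\ne0$, namely wherever $|m(\xi/2)|=1$. The paper handles this via the unitarity of the filter matrix \eqref{u1}: $|m(\xi/2)|=1$ forces $m(\xi/2+k/2)=0$ and hence $\hat\varphi(\xi+k)=0$ for all odd $k$, which together with (S3) for $S$ forces $\xi\in S$, i.e.\ $\xi\notin W$. Without an argument of this kind your plan cannot close even if the selection lemma were granted, so the proposal has a genuine gap at both of its critical junctures.
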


In Section \ref{S4} we give the proof of Theorem \ref{rze}. 
Despite this initial progress, not much is known about frequency supports of non-MRA wavelets where Problem \ref{larson} remains wide open. The second author and Speegle \cite{RzS} have investigated this problem using the concept of an interpolation pair of wavelet sets, which was introduced by Dai and Larson in \cite{DL}.

\subsection{Density of Riesz wavelets}

Another fundamental problem posed by Larson \cite{La2} asks about density of Riesz wavelets.

\begin{problem}\label{riesz}
Is the collection of all Riesz wavelets dense in $L^2(\R)$?
\end{problem}

Larson in \cite{La2} gives several pieces of evidence why the answer to Problem \ref{riesz} might be affirmative. For example, if $\psi_0$ and $\psi_1$ are o.n. wavelets, then their convex combination $(1-t)\psi_0 + t \psi_1$ is a Riesz wavelet for all $t\in \R$ possibly with the exception of $t=1/2$. Hence, a line connecting any two o.n. wavelets is in the norm closure of the set of Riesz wavelets. In the case of frame wavelets the first author has shown the following positive result \cite[Theorem 2.1]{B07}. A similar density result was independently obtained by Cabrelli and Molter \cite{CM}.

\begin{theorem}\label{den3}
The collection of all framelets 
\[
\mathcal W_{f}= \{ \psi \in L^2(\R): \psi \text{ is a framelet} \}.
\]
is dense in $L^2(\R)$. 
\end{theorem}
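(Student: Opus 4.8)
The plan is to first replace $f$ by a function $g$ whose Fourier transform is bounded and compactly supported away from the origin --- such $g$ are dense in $L^2(\R)$ --- say with $\supp\hat g\subset\{1/R\le|\xi|\le R\}$ and $\|\hat g\|_\infty\le M$, and then to build a framelet close to $g$ by adding to $\hat g$ a single small ``spike'' on a low-frequency dyadic octave. Explicitly, for a small parameter $c>0$ I would set $W:=[c,2c]\cup[-2c,-c]$, so that $\{2^jW\}_{j\in\Z}$ tiles $\R$ up to a null set, and, for a parameter $A>0$, define $\psi$ by $\hat\psi:=\hat g+\sqrt A\,\ch_W$. Choosing $c<1/(2R)$ makes $W$ disjoint from $\supp\hat g$, so $\hat\psi-\hat g=\sqrt A\,\ch_W$ and $\|\psi-g\|_2^2=\|\hat\psi-\hat g\|_2^2=A\,|W|=2Ac$; this will be driven below $\e^2$ at the very end by shrinking $c$.

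Next I would verify the frame property via the standard sufficient condition for wavelet frames (which itself comes from writing each $\langle h,\psi_{j,k}\rangle$ as a Fourier coefficient of a $1$-periodization and applying Parseval; see e.g.\ \cite{HW}): $\{\psi_{j,k}\}$ is a frame provided
\[
0<\operatorname{ess\,inf}_\xi\Phi_\psi(\xi),\qquad \operatorname{ess\,sup}_\xi\Phi_\psi(\xi)<\infty,\qquad \sum_{q\in2\Z+1}\bigl(\beta(q)\beta(-q)\bigr)^{1/2}<\operatorname{ess\,inf}_\xi\Phi_\psi(\xi),
\]
where $\Phi_\psi(\xi):=\sum_{j\in\Z}|\hat\psi(2^j\xi)|^2$ and $\beta(q):=\operatorname{ess\,sup}_\xi\sum_{j\in\Z}|\hat\psi(2^j\xi)|\,|\hat\psi(2^j\xi+q)|$. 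Because $W$ is disjoint from $\supp\hat g$ and tiles $\R$ dyadically, the cross terms in $|\hat\psi(2^j\xi)|^2=|\hat g(2^j\xi)|^2+A\,\ch_W(2^j\xi)$ vanish, giving the exact identity $\Phi_\psi=\Phi_g+A$, where $\Phi_g(\xi):=\sum_j|\hat g(2^j\xi)|^2$. Since $\supp\hat g$ lies in an annulus spanning at most $2\log_2R$ octaves, $0\le\Phi_g\le(2\log_2R+1)M^2$, so $A\le\Phi_\psi\le(2\log_2R+1)M^2+A<\infty$; the first two conditions above hold (with $\operatorname{ess\,inf}\Phi_\psi\ge A$), and everything reduces to the cross-term bound.

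Estimating $\beta(q)$ is the heart of the matter. Expanding $\hat\psi=\hat g+\sqrt A\,\ch_W$ in the product defining $\beta(q)$ gives four types of terms: the $\ch_W$--$\ch_W$ term vanishes for every $q\neq0$ once $c<1/4$, since then $W$ is too short to contain two points an integer apart; the $\hat g$--$\hat g$ term is at most $(2\log_2R+1)M^2$ and is nonzero only when $|q|\le 2R$; and each mixed term $\sqrt A\,|\hat g(2^j\xi)|\,\ch_W(2^j\xi+q)$ (and its mirror image) involves at most one scale $j$ --- because $W$ is a \emph{single} octave sitting near the origin while $\supp\hat g$ is bounded away from it --- hence is at most $\sqrt A\,M$ and is again nonzero only for $|q|\le 2R+2$. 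Summing over odd $q$ gives $\sum_{q}(\beta(q)\beta(-q))^{1/2}\le C_1(R,M)+C_2(R,M)\sqrt A$ with $C_1,C_2$ depending only on $R,M$ --- crucially \emph{not} on $c$. So the third condition holds as soon as $A-C_2\sqrt A-C_1>0$, i.e.\ once $A$ exceeds a threshold $A_0(R,M)$. I would then fix such an $A$ and finally choose $c<\min\{1/4,\,1/(2R),\,\e^2/(2A)\}$, obtaining a framelet $\psi$ with $\|\psi-g\|_2<\e$; this completes the proof.

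The obstacle this construction is designed to defeat is that a \emph{generic} small perturbation of $g$, while it does make $\operatorname{ess\,inf}\Phi$ positive, simultaneously contributes a cross-term error comparable to the Bessel bound of the wavelet system of $g$ itself (which is Bessel but not a frame), and that error --- finite but not small --- swamps the new lower bound. The trick is to realize that the lower bound $A$ gained from a spike on one dyadic octave need not be small: a spike at low frequency is as small as we like in $L^2$, yet contributes any prescribed amount $A$ to $\operatorname{ess\,inf}\Phi$, while --- being concentrated on a single narrow low-frequency octave --- it inflates the cross-term sum by only an $O(\sqrt A)$ quantity that is independent of how low it is placed. Taking $A$ large enough therefore overwhelms the fixed error coming from $g$, and the rest is bookkeeping.
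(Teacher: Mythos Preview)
The paper does not actually prove this theorem; it is quoted from \cite[Theorem 2.1]{B07} without argument, so there is no in-paper proof to compare against directly. Your construction is correct and captures the essential idea: a spike $\sqrt{A}\,\ch_W$ placed on a single low-frequency dyadic octave can be made arbitrarily small in $L^2$ (by sending $c\to 0$) while contributing an arbitrarily large constant $A$ to $\operatorname{ess\,inf}\Phi_\psi$, and since the cross-term sum grows only like $C_1+C_2\sqrt{A}$ with $C_1,C_2$ independent of $c$, taking $A$ large wins. This is exactly the mechanism used in \cite{B07}, which likewise perturbs a band-limited approximant by a piece supported on a dilation-tiling set disjoint from $\supp\hat g$.

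Two small points to tighten. First, for the ``at most one scale $j$'' claim in the mixed terms you actually need $c<1/(4R)$ rather than $c<1/(2R)$: if $j_1<j_2$ both satisfy $2^{j_i}\xi\in(W-q)\cap\supp\hat g$, then $|2^{j_2}\xi-2^{j_1}\xi|\le\operatorname{diam}(W-q)=4c$, while $|2^{j_2}\xi-2^{j_1}\xi|\ge|2^{j_1}\xi|\ge 1/R$, forcing $4c\ge 1/R$. Second, the Daubechies-type sufficient condition with $\beta(q)=\operatorname{ess\,sup}_\xi\sum_{j\in\Z}|\hat\psi(2^j\xi)||\hat\psi(2^j\xi+q)|$ is normally stated with the outer sum over \emph{all} nonzero integers, not just odd ones (the odd-$q$ reduction belongs to the version with $j\ge 0$ and shift $2^j q$). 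This is harmless here: since $\supp\hat\psi\subset[-R,R]$, only the finitely many $m$ with $|m|\le 2R$ contribute, and your term-by-term estimates carry over verbatim to give the same bound $C_1+C_2\sqrt{A}$.
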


In addition, Han and Larson \cite{HL} has shown that any $f\in L^2(\R)$ can be approximated in $L^2(\R)$-norm by a sequence $\{\psi_k\}_{k\in\N } \subset \mathcal W_{f}$ of asymptotically tight frame wavelets. Namely, if $0<c_k\le d_k<\infty$ denote the lower and the upper frame bounds of $\psi_k$, then $d_k/c_k \to 1$ as $k\to \infty$. However, the situation changes drastically if we restrict ourselves to the class of tight frame wavelets. These are functions $\psi \in L^2(\R)$ satisfying \eqref{fr} with equal bounds $c=d$. Then the answer becomes negative by \cite[Corollary 2.1]{B09}.

\begin{theorem}\label{den5}
The collection of all tight frame wavelets 
\[
\mathcal W_{tf}= \{ \psi \in L^2(\R): \psi \text{ is a tight framelet} \}.
\]
is not dense in $L^2(\R)$. 
\end{theorem}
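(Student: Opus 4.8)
The plan is to produce an explicit $f\in L^2(\R)$ lying outside the norm closure of $\mathcal W_{tf}$. A natural candidate is the $f$ with $\hat f=\ch_{[1,2]}$: its frequency support sits inside a single dyadic annulus on the positive half‑line, and its dilation orbit $\{2^j[1,2]:j\in\Z\}$ is exactly $(0,\infty)$, hence misses the entire negative half‑line. I would argue by contradiction: assume $\psi\in\mathcal W_{tf}$ has frame bound $c>0$ and $\|\psi-f\|_2=\|\hat\psi-\ch_{[1,2]}\|_2=\epsilon$ small, and derive a contradiction once $\epsilon$ is below an absolute threshold.

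First I would record the standard necessary conditions for a tight frame wavelet with bound $c$ (the wavelet characterization equations, cf. \cite{HW}): the Calder\'on ("$t_0$") equation $\sum_{j\in\Z}|\hat\psi(2^j\xi)|^2=c$ for a.e.\ $\xi$, which in particular forces $|\hat\psi(\xi)|^2\le c$ a.e.; the orthogonality ("$t_q$") equations $\sum_{j\ge 0}\hat\psi(2^j\xi)\overline{\hat\psi(2^j(\xi+q))}=0$ for a.e.\ $\xi$ and every odd $q$; and the norm bound $\|\psi\|^2\le c$, valid because $\{c^{-1/2}\psi_{j,k}\}$ is a Parseval frame and every element of a Parseval frame has norm at most $1$. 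Since $\|\psi\|^2\ge(\|f\|_2-\epsilon)^2=(1-\epsilon)^2$, the norm bound yields $c\ge(1-\epsilon)^2$. On the other hand, for $\xi<0$ every dilate $2^j\xi$ is again negative, so the Calder\'on equation restricted to $\xi<0$ only involves the negative‑frequency part $\phi:=\hat\psi\cdot\ch_{(-\infty,0)}$ and reads $\sum_{j\in\Z}|\phi(2^j\xi)|^2=c$ for a.e.\ $\xi<0$; in particular $\phi\not\equiv 0$. But $\phi$ is $L^2$‑tiny: since $\hat f$ vanishes on $(-\infty,0)$, $\|\phi\|_2^2=\int_{-\infty}^0|\hat\psi-\hat f|^2\le\epsilon^2$. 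So a good approximant must pack a nearly unit amount of Calder\'on mass into a negative‑frequency profile of tiny $L^2$ norm — necessarily a "sliver" concentrated near the origin — and the job is to rule this out.

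The decisive tool is the $t_q$ equations, and the crucial structural point is that they sum only over $j\ge 0$, so — unlike the $t_0$ equation — they are stable under truncation and immune to the low‑frequency weight blow‑up $2^{-j}\to\infty$. Concretely, I would apply $t_{-1}(\xi)=\sum_{j\ge 0}\hat\psi(2^j\xi)\overline{\hat\psi(2^j\xi-2^j)}=0$ for $\xi\in(\tfrac12,1)$. Only the $j=1$ term can fail to be negligible there: for $j=0$ one has $\xi,\xi-1\in(-\tfrac12,1)$, where $\hat\psi$ is $L^2$‑small, and for $j\ge 2$ one has $2^j\xi\ge 2$ and $2^j\xi-2^j\le 0$, again regions where $\hat\psi$ is $L^2$‑small; Cauchy--Schwarz bounds the sum of the remaining terms by $O(\epsilon^2)$. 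Hence $\int_{1/2}^1|\hat\psi(2\xi)|\,|\hat\psi(2\xi-2)|\,d\xi=O(\epsilon^2)$, i.e.\ (substituting $\eta=2\xi$) $\int_1^2|\hat\psi(\eta)|\,|\hat\psi(\eta-2)|\,d\eta=O(\epsilon^2)$. Since $|\hat\psi|\ge\tfrac12$ on $[1,2]$ off a set of measure $\le 4\epsilon^2$, this forces $\int_{[-1,0]}|\hat\psi(s)|\,ds=O(\epsilon^2)$ — an $L^1$ bound on the negative sliver far stronger than what mere $L^2$‑closeness gives. The plan is then to iterate: running the analogous $t_q$ relations to compare the bulk $[1,2]$ with the dyadic rescalings of the region $[-1,0)$ propagates this strong smallness through all scales, and one finally confronts it with the logarithmic Calder\'on identity $\int_{-\infty}^0|\hat\psi(\xi)|^2/|\xi|\,d\xi=c\ln 2\ge(1-\epsilon)^2\ln 2$ (obtained by integrating $t_0$ over $[-2,-1]$ against the dilation‑invariant measure $d\xi/|\xi|$) together with the pointwise ceiling $|\hat\psi|^2\le c$; a sliver cannot be simultaneously this $L^1$‑small across all scales and still carry that much $1/|\xi|$‑weighted mass.

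The main obstacle I expect is closing this last incompatibility cleanly. The easy bulk invariants are not enough on their own: the norm bound $\|\psi\|^2\le c$ only bounds $c$ from below, and $c$ genuinely cannot be bounded from above in terms of $f$ (a thin high spike in $\hat\psi$ makes $c$ arbitrarily large while keeping $\|\psi-f\|_2$ tiny), so the contradiction must come from the fine interplay between the $t_q$ equations (which pin the negative sliver rigidly against the bulk on $[1,2]$), the ceiling $|\hat\psi|^2\le c$, and the logarithmic Calder\'on identity. Managing the bookkeeping of the scale‑by‑scale iteration near $\xi=0$, where all these constraints are most sensitive, is the technical heart of the proof.
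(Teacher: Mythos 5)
Your proposed counterexample does not work: the function $f$ with $\hat f=\ch_{[1,2]}$ actually \emph{does} lie in the closure of $\mathcal W_{tf}$ — indeed in the closure of the set of orthonormal MSF wavelets. For $0<\delta\le 1/2$ the set $W_\delta=[-2\delta,-\delta)\cup[1-\delta,2-2\delta)$ is a wavelet set (it is $2S\setminus S$ for the scaling set $S=[-\delta,1-\delta]$; its two pieces are dyadic annuli tiling $(-\infty,0)$ and $(0,\infty)$ under dilations, and translating the second piece by $-2$ turns $W_\delta$ into the unit interval $[-1-\delta,-\delta)$, so integer translates tile). Since $|W_\delta\,\triangle\,[1,2]|=4\delta$, the o.n.\ wavelets $\psi_\delta$ with $\hat\psi_\delta=\ch_{W_\delta}$ satisfy $\|\psi_\delta-f\|_2=2\sqrt{\delta}\to 0$. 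This family is exactly the ``sliver'' profile you were trying to exclude, and it satisfies every invariant in your list with $c=1$: the negative part $\ch_{[-2\delta,-\delta)}$ has $L^2$ norm $\sqrt\delta\le\epsilon$, $L^1$ mass $\delta=O(\epsilon^2)$ on $[-1,0]$ (consistent with your $t_{-1}$ estimate), carries logarithmic Calder\'on mass exactly $\ln 2$, and respects the ceiling $|\hat\psi|\le\sqrt c$. So the ``incompatibility'' you hoped to close in the last step genuinely does not exist; no amount of bookkeeping will produce a contradiction.

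The correct mechanism is the one you half-identified but then set aside: the $t_q$ functionals $t_q(f)=\sum_{j\ge0}\hat f(2^j\cdot)\overline{\hat f(2^j(\cdot+q))}$ are \emph{continuous from $L^2(\R)$ to $L^1(\R)$} (Cauchy--Schwarz gives the $j$-th term $L^1$ norm at most $2^{-j}\|f\|_2\|g\|_2$, so the series converges geometrically — this is precisely your observation that the sum over $j\ge0$ is ``immune to the low-frequency blow-up''). Hence $\{f: t_q(f)=0 \text{ for all odd } q\}$ is a closed set containing every tight frame wavelet, and the closure of $\mathcal W_{tf}$ sits inside it. The mistake was to choose an $f$ for which all the $t_q$ obstructions vanish ($\ch_{[1,2]}$ translation-tiles, so $t_q(f)=0$ for every $q\ne0$) and then to try to extract a contradiction from the Calder\'on equation, which is \emph{not} a closed condition in $L^2$. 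Choosing instead, say, $\hat f=\ch_{[0,2]}$ gives $t_{-1}(f)=\ch_{[1,2]}\ne 0$ (only the $j=0$ term survives), and the continuity argument immediately shows $f\notin\overline{\mathcal W_{tf}}$, which is the content of the cited result \cite{B09b}.
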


A partial positive result related to Problem \ref{riesz} was obtained by Cabrelli and Molter \cite{CM}, where the authors proved that any $f\in L^2(\R^n)$ can be approximated in $L^2(\R^n)$ norm by Riesz wavelets associated to expansive dilation matrices $A$ and lattices of translates $\Gamma$; for definitions see Section \ref{S3}. However,  both dilations $A$ and lattices $\Gamma$ vary with the accuracy of approximation. Hence, Problem \ref{riesz} remains open, since it asks about density of Riesz wavelets for a fixed (dyadic) dilation and a fixed lattice of translates (integers).

\subsection{Intersection of negative dilates}

Yet another fundamental problem in the theory of wavelets was posed by Baggett in 1999. Baggett's problem asks whether every Parseval wavelet $\psi$ must necessarily come from a generalized multiresolution analysis (GMRA). A concept of GMRA was introduced by Baggett, Medina, Merrill \cite{BMM} as a natural generalization of MRA.

\begin{definition}
A sequence $\{V_j:j\in\Z\}$ of closed subspaces of $L^2(\R)$ is called a {\it multiresolution analysis} (MRA) if (M1)--(M4) in Definition \ref{MRA} hold and the space $V_0$ is shift-invariant
\begin{enumerate}
\item[(M5')] $f(\cdot) \in V_0 \implies f(\cdot -k)$ for all $k\in \Z$.
\end{enumerate}
\end{definition}

To formulate Baggett's problem we also need a concept of space of negative dilates.

\begin{definition}
Let $\psi \in L^2(\R)$  be a frame wavelet. A {\it space of negative dilates} of $\psi$ is defined as
\begin{equation}
V(\psi)=\overline{\spa}\{\psi_{j,k} : j<0,k\in\Z\}.
\end{equation}
We say that $\psi$ is associated with a GMRA $\{V_j:j\in\Z\}$ if $V(\psi)=V_0$.
\end{definition}

Suppose that $\psi \in L^2(\R^n)$ is a Parseval wavelet. Then, we can define spaces 
\[
V_j = D^j(V(\psi)) \qquad j\in \Z,
\]
where $Df(x)=\sqrt{2} f(x)$ is a dilation operator. Baggett has shown that a sequence $\{V_j:j\in\Z\}$ satisfies all properties of GMRA (M1), (M2), (M4), and (M5') possibly with the exception of (M3). Hence, it is natural to ask the following question.

\begin{problem}\label{bagg}
Let $\psi$ be a Parseval wavelet with the space of negative dilates $V=V(\psi)$.
Is it true that
\[
\bigcap_{j\in\Z} D^j(V(\psi))=\{0\}?
\]
\end{problem}

Despite its simplicity Problem \ref{bagg} is a difficult open problem and only partial results are known. The authors proved in \cite{BR2} that if the dimension function (also called multiplicity function) of $V(\psi)$ is not identically $\infty$, then the answer to Problem \ref{bagg} is affirmative. A generalization of this result was shown in \cite{B09}.
Problem \ref{bagg} is not only interesting for its own sake, but it also has several implications for other aspects of the wavelet theory. For example, it was shown in \cite{BR2} that a positive answer would imply that all compactly supported Parseval wavelets come from a MRA, thus generalizing the well-known result of Lemari\'e-Rieusset \cite{Au, Le} for compactly supported (orthonormal) wavelets. However, there is some evidence that the answer to Problem \ref{bagg} might be negative. The authors in \cite{BR2} have shown examples of (non-tight) frame wavelet $\psi$ such that its space of negative dilates is the largest possible $V(\psi)=L^2(\R)$. In fact, the following theorem was shown in \cite[Theorem 8.20]{B08}.

\begin{theorem}\label{mains} For any $\delta>0$, there exists a frame wavelet $\psi\in L^2(\R)$ such that:
\begin{enumerate}[(i)]
\item the frame bounds of a wavelet system $\{\psi_{j,k}: j,k\in \Z\}$ are $1$ and $1+\delta$,
\item the space $V$ of negative dilates of $\psi$ satisfies $V(\psi)=L^2(\R)$,
\item $\hat \psi$ is $C^\infty$ and all its derivatives have exponential decay,
\item $\psi$ has a dual frame wavelet.
\end{enumerate}
\end{theorem}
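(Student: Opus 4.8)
The plan is to construct the Fourier transform $\hat\psi$ directly, as an explicit smooth profile on the frequency line. The first observation is that $V(\psi)=L^2(\R)$ forces $\supp\hat\psi$ to be unbounded: each $\psi_{j,k}$ has $\widehat{\psi_{j,k}}$ supported in $2^{j}\supp\hat\psi$, so the frequency supports of the negative dilates $\{\psi_{j,k}:j<0\}$ all lie inside $\bigcup_{j<0}2^{j}\supp\hat\psi$, which is bounded whenever $\supp\hat\psi$ is. Hence the sought $\psi$ cannot be band-limited, and (iii) is the natural substitute. I would therefore take $\hat\psi\in C^\infty(\R)$ vanishing near the origin (this is forced by the admissibility/Calder\'on condition for a wavelet frame), nonzero a.e.\ on $\{|\xi|\ge c\}$ for a suitable $c>0$, and with $|\hat\psi^{(n)}(\xi)|\le C_n e^{-a|\xi|}$ for all $n$; this makes (iii) automatic, and the exponential tail is precisely what lets the negative dilates reach arbitrarily high frequencies.

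For (i) I would use the standard Fourier-analytic description of wavelet frames (as in \cite[Chapter 7]{HW}): the Bessel bound follows from a uniform bound on $\sum_{j\in\Z}|\hat\psi(2^{j}\xi)|^2$ together with the smoothness and exponential decay of $\hat\psi$, which control the off-diagonal quantities $t_q(\xi)=\sum_{j\ge0}\hat\psi(2^{j}\xi)\overline{\hat\psi(2^{j}(\xi+q))}$, while the lower frame bound follows from a uniform positive lower bound on $\sum_{j\in\Z}|\hat\psi(2^{j}\xi)|^2$ once those off-diagonal terms are absorbed. The exact values $1$ and $1+\delta$ are then a normalization: only the ratio of the optimal frame bounds is scale-invariant, so if $\{\psi_{j,k}\}$ is a frame with optimal bounds $A\le B$, then $A^{-1/2}\psi$ has optimal bounds $1$ and $B/A$; it remains to engineer the profile so that $B/A\le 1+\delta$, which I would do by taking $|\hat\psi|$ pointwise close (in a weighted, dyadically summed sense) to the profile of a Parseval wavelet, so that $\sum_j|\hat\psi(2^j\xi)|^2$ is nearly constant and the $t_q$ are small. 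Rescaling by $A^{-1/2}$ affects none of (ii)--(iv).

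The heart of the argument is (ii). By periodization, $f\in V(\psi)^\perp$ if and only if $\langle f,\psi_{j,k}\rangle=0$ for all $j<0$ and $k\in\Z$, which is equivalent to the overdetermined system
\[
\sum_{l\in\Z}\hat f\!\left(2^{-m}(\zeta+l)\right)\overline{\hat\psi(\zeta+l)}=0\qquad\text{for a.e. }\zeta,\ \text{all }m\ge1 .
\]
The plan is to choose $\hat\psi$ so that this system forces $\hat f\equiv0$: one arranges the $m=1$ equation to relate the values of $\hat f$ along each coset $\zeta/2+\tfrac12\Z$ in an essentially invertible way on a large set, and then uses the whole family $m\ge1$ to propagate $\hat f=0$ from high frequencies down to all frequencies, using that the exponential tail keeps every weight $\overline{\hat\psi(\zeta+l)}$ nonzero. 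A crucial structural point, which rules out a naive perturbative construction, is that the tail of $\hat\psi$ must be genuinely large, not infinitesimal: if $\{\psi_{j,k}:j<0\}$ were a sufficiently small perturbation of the orthonormal system $\{\eta_{j,k}:j<0\}$ of a band-limited orthonormal wavelet $\eta$, a contraction-mapping argument would realize all of the infinite-dimensional space $\overline{\spa}\{\eta_{j,k}:j\ge0\}$ as a graph inside $V(\psi)^\perp$, forcing $V(\psi)\ne L^2(\R)$. Thus the construction must keep the tail large enough to make the displayed system non-degenerate, yet controlled enough to keep the frame-bound ratio below $1+\delta$ and to retain the regularity needed below; balancing these competing requirements is the main obstacle, and it is a genuine construction rather than a soft argument.

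Finally, for (iv) I would produce a dual frame wavelet $\tilde\psi$ by solving the duality relations for dual wavelet frames: when both $\psi$ and $\tilde\psi$ generate Bessel systems, $\{\psi_{j,k}\}$ and $\{\tilde\psi_{j,k}\}$ are dual frames if and only if a Calder\'on-type identity $\sum_{j\in\Z}\hat\psi(2^{j}\xi)\overline{\hat{\tilde\psi}(2^{j}\xi)}=1$ holds a.e.\ together with the vanishing of the corresponding off-diagonal sums for odd $q$. I would look for $\hat{\tilde\psi}$ supported where $\hat\psi$ is, solving these equations by a recursive/Neumann-series scheme adapted to the dyadic structure; the exponential decay of $\hat\psi$ makes the relevant operators bounded and the scheme convergent, yielding $\tilde\psi$ with the required decay. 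Note that since (ii) forces the negative-dilate system to be overcomplete, the canonical dual $S^{-1}\psi$ need not itself generate a wavelet system, which is why $\tilde\psi$ is constructed by hand rather than by inverting the frame operator.
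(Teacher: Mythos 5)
First, a remark on the comparison you asked for: this paper does not actually prove Theorem \ref{mains} — it is quoted from \cite[Theorem 8.20]{B08} — so there is no in-paper argument to measure your proposal against, and it has to be judged on its own. On those terms, your write-up is a correct and well-informed description of what a proof must accomplish, but it is not yet a proof: at each point where the content of the theorem actually lives you record an intention rather than a construction. The reductions you make are sound: the Fourier-side control of frame bounds via $\sum_j|\hat\psi(2^j\xi)|^2$ and the off-diagonal sums, the normalization $\psi\mapsto A^{-1/2}\psi$ (which indeed leaves (ii)--(iv) intact), the periodized system $\sum_{l}\hat f(2^{-m}(\zeta+l))\overline{\hat\psi(\zeta+l)}=0$ for $m\ge 1$ characterizing $V(\psi)^\perp$, and the observation that $V(\psi)=L^2(\R)$ forbids band-limited $\psi$. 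Your ``structural point'' is also correct and worth stating carefully: if for some band-limited o.n.\ wavelet $\eta$ the system $\{(\psi-\eta)_{j,k}:j<0,\ k\in\Z\}$ is Bessel with bound $<1$, then the fixed-point map you sketch realizes, for every $g\in V(\eta)^\perp$, an element $g+h(g)\in V(\psi)^\perp$, so $V(\psi)\ne L^2(\R)$.

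But that observation is precisely what makes the remaining step hard, and you stop there. A proof must exhibit one concrete profile $\hat\psi$ for which (a) the coupling between the integer translates of $\supp\hat\psi$ in the displayed system is strong enough that the only $L^2$ solution is $\hat f\equiv 0$ — in practice this means the perturbation away from any nearby o.n.\ wavelet has half-system Bessel bound at least $1$, and the constraints must chain together across all of frequency space so that a nonzero $\hat f$ would be forced to have infinite $L^2$ norm — while (b) the full system still has frame bounds inside $[1,1+\delta]$. These two demands pull in opposite directions, and the proposal offers no mechanism reconciling them; indeed ``taking $|\hat\psi|$ pointwise close to the profile of a Parseval wavelet'' is exactly what your own contraction argument rules out if ``close'' is measured in the Bessel sense, so the smallness has to be achieved through cancellation in the full system rather than smallness of the perturbation, and that is the part you would actually have to build and verify. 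Part (iv) has the same character: since, as you note, the canonical dual need not be a wavelet, the dual generator must be constructed by hand, and ``a recursive/Neumann-series scheme adapted to the dyadic structure'' is neither specified nor shown to converge to a Bessel generator satisfying the duality equations. So the gap is not a technicality: the existence assertion of the theorem is exactly the construction you defer.
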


\subsection{Extension of wavelet frames}

A more recent problem was proposed by Christensen and his collaborators \cite{CKK1, CKK2}.

\begin{problem}\label{chr}
Suppose $\psi$ is Bessel wavelet with bound $<1$. Does there exist $\psi'$ such that the combined wavelet system 
\[
\{\psi_{j,k}: j,k \in \Z\} \cup \{\psi'_{j,k}: j,k \in \Z\}.
\]
generated by $\psi$ and $\psi'$ is a Parseval frame?
\end{problem}

The original formulation of Problem \ref{chr} asks for an extension of a pair of Bessel wavelets to a pair of dual frames. Hence, Problem \ref{chr} is a simplified version of a problem proposed in \cite{CKK1}. 
Despite partial progress in a subsequent work of Christensen et al. \cite{CKK2}, either formulation of this problem remains open. It is worth adding that an analogue of Problem \ref{chr} for Gabor Bessel sequences has been proven in \cite[Theorem 3.1]{CKK1}.

\subsection{A simple question that nobody has bothered to answer}

The last problem illustrates the difficulty of determining whether a function is a frame wavelet or not. The following problem was proposed by Weber and the first author \cite{BW}.

\begin{problem}
For $0<b<1$ define $\psi_b \in L^2(\R)$ by $\hat \psi_b =
\ch_{(-1,-b) \cup (b, 1)}$.
For what values of $1/8< b \le 1/6$, is $\psi_b$ a frame wavelet?
\end{problem}

The above range of parameter $b$ seems to be the hardest in determining a frame wavelet property of $\psi_b$. Outside of this range, the following table lists properties of $\psi_b$ which were shown in \cite{BW}.

\begin{center}
\begin{tabular}{|c|c|c|}
\hline &&\\ Range of $b$ & Property of $\psi_b$ & Dual frame wavelets of $\psi_b$ 
\\ \hline &&\\
 $b=0$ & not a frame wavelet & no duals exist 
\\ \hline &&\\ 
 $0<b \le 1/8 $ & frame wavelet (not Riesz) & no duals exist 
 \\ \hline &&\\
$1/6< b < 1/3 $ & not a frame wavelet & no duals exist 
\\ \hline &&\\ 
$1/3 \le b < 1/2 $ & biorthogonal Riesz wavelet
& a unique dual exists  
\\ \hline &&\\ 
$b=1/2$ & orthonormal wavelet & a unique dual exists 
  \\ \hline &&\\ 
$1/2< b <1$ & not a frame wavelet & no duals exist 
\\ \hline
\end{tabular}
\end{center}

\section{Higher dimensional wavelets} \label{S3}

In this section we concentrate on problems involving higher dimensional wavelets. Most of the one dimensional problems discussed in Section \ref{S2} have higher dimensional analogues. Rather surprisingly, their higher dimensional analogues have definitive answers for certain classes of dilation matrices. Subsequently, we shall focus on problems which have been resolved in one or two dimensions, but remain open in higher dimensions. 

We start by a higher dimensional analogue of Definition \ref{wavelet}.

\begin{definition}\label{wavh}
Let $A\in GL_n(\R)$ be $n\times n$ invertible matrix. Let $\Gamma \subset \R^n$ be a full rank lattice.
We say that $\psi \in L^2(\R^n)$ is an o.n. wavelet associated with a pair $(A,\Gamma)$ if the collection of translates and dilates
\begin{equation}\label{wavelet0}
\psi_{j,k}(x) := |\det A|^{j/2} \psi(A^j x-k), \qquad j\in \Z, k \in \Gamma,
\end{equation}
forms an o.n. basis of $L^2(\R)$.
\end{definition}

A typical choice for $\Gamma$ is a standard lattice $\Z^n$. Moreover, we can often reduce to this case by making a linear change of variables. Indeed, suppose that $\Gamma= P\Z^n$ for some $P\in GL_n(\R)$. Then, $\psi\in L^2(\R^n)$ is an o.n. wavelet associated with $(A,\Gamma)$ if and only if $|\det P|^{1/2} \psi(P \cdot)$ is an o.n. wavelet associated with $(P^{-1}AP,\Z^n)$. Hence, the choice of a standard lattice $\Gamma=\Z^n$ is not an essential restriction. 

For some of the problems discussed in this section, it is imperative that we allow more than one function generating a wavelet system. Hence, more generally a $(A,\Gamma)$ wavelet is a finite collection $\{\psi^1,\ldots,\psi^L\} \subset L^2(\R^n)$, so that the corresponding wavelet system 
\[
\{\psi^l_{j,k}: l=1,\ldots,L, j\in \Z, k\in \Z^n \}
\]
is an o.n. basis of $L^2(\R^n)$.

\subsection{Known results}

A typical assumption about a dilation $A$ is that it is {\it expansive} or {\it expanding}. That is, all of eigenvalues $\lambda$ of $A$ satisfy $|\lambda|>1$. This is the class of dilations for which most of the higher dimensional wavelet theory has been developed. In addition, it is often assumed that a dilation $A$ has integer entries, or equivalently 
\begin{equation}\label{inv}
A\Z^n \subset \Z^n.
\end{equation} 

The latter condition assures that higher dimensional analogue of the classical dyadic wavelet system has nested translation structure across all its scales. Indeed, a wavelet system at scale $j\in\Z$ is invariant under translates by vectors in $A^{-j}\Z^n$. It is often desirable that a wavelet system at $j+1$ scale, which is invariant under $A^{-j-1}\Z^n$, includes all translations at $j$ scale. This is the main reason for imposing the invariance condition \eqref{inv}. For such class of expansive dilations Problems \ref{con}, \ref{larson}, and \ref{bagg} all remain open. 

On the antipodes lie dilations $A$ farthest from preserving the lattice $\Z^n$, satisfying 
\begin{equation}\label{ccw0}
\Z^n \cap (A^T)^j(\Z^n) = \{0\} \qquad\text{for all }j\in\Z \setminus \{0\},
\end{equation}
where $A^T$ is the transpose of $A$.
Somewhat surprisingly, more is known about wavelets associated with such dilations than those satisfying \eqref{inv}.

\begin{theorem}
Assume that $A \in GL_n(\R)$ is an expansive matrix satisfying \eqref{ccw0}.
Then, the following hold:
\begin{enumerate}[(i)]
\item
The collection of all o.n. wavelets associated to $(A,\Z^n)$ is path connected in $L^2(\R)$ norm.
\item
The collection of all Parseval wavelets associated to $(A,\Z^n)$ is path connected in $L^2(\R)$ norm.
\end{enumerate}
\end{theorem}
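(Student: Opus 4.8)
The plan is to reduce the connectivity questions for $(A,\Z^n)$-wavelets, under the ``complementary'' condition \eqref{ccw0}, to a tractable model in which the entire wavelet system is controlled by a single scalar-valued function on $\R^n$. First I would invoke the characterization of such wavelets in the Fourier domain: when $\Z^n\cap(A^T)^j(\Z^n)=\{0\}$ for all $j\ne 0$, the shift-invariant structure at different scales overlaps only trivially, so the quadrature (Calder\'on-type) and orthonormality equations that a wavelet must satisfy collapse dramatically. Concretely, I expect that for dilations satisfying \eqref{ccw0} a function $\psi$ is an orthonormal wavelet associated with $(A,\Z^n)$ if and only if $|\hat\psi|=\ch_W$ for a measurable set $W\subset\R^n$ which simultaneously $\Z^n$-translation tiles and $A^T$-dilation tiles $\R^n$ --- that is, \emph{every} orthonormal wavelet for such $(A,\Z^n)$ is automatically an MSF (wavelet-set) wavelet. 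This is the higher-dimensional analogue of the phenomenon that makes these dilations ``easy'': there is no room for a nontrivial dimension function, so $\mathscr D\equiv 1$ is forced and the wavelet is band-limited with $|\hat\psi|$ a characteristic function. The same reasoning handles Parseval wavelets: under \eqref{ccw0} a Parseval wavelet must satisfy $|\hat\psi|=\ch_W$ with $W$ a set that $A^T$-dilation tiles $\R^n$ and whose $\Z^n$-translates cover $\R^n$ (with multiplicity one a.e. on the overlap being relaxed to a packing/covering condition appropriate to the Parseval case).

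With that reduction in hand, the second step is to upgrade Speegle's one-dimensional connectivity theorem (Theorem~\ref{con2}) to this setting. For part (i), I would show that the collection of such wavelet sets $W$ --- those that tile under both $\{W+k\}_{k\in\Z^n}$ and $\{(A^T)^jW\}_{j\in\Z}$ --- is path connected in the symmetric-difference (pseudo)metric. Speegle's argument proceeds by a ``filling in'' construction: given two wavelet sets, one interpolates by redistributing mass between dilation cells while maintaining the tiling property, and the key engine is that expansiveness of $A^T$ together with \eqref{ccw0} guarantees enough freedom to transport pieces of one set onto another along the dilation orbit without creating translation-tiling collisions. I would carry this out by first connecting an arbitrary wavelet set $W$ to a fixed ``standard'' one (e.g.\ a suitable multi-annulus analogue of $[-1,-1/2)\cup[1/2,1)$ adapted to $A^T$), then noting that a continuous path in symmetric-difference metric lifts to a continuous path $t\mapsto\ch_{W_t}$ in $L^2(\R^n)$, and finally choosing phases: since each $\psi_t$ with $|\hat\psi_t|=\ch_{W_t}$ differs only by a unimodular multiplier on $W_t$, one can take the phase identically $1$ along the path, so the map $t\mapsto\mathcal F^{-1}(\ch_{W_t})$ is the required continuous path of wavelets. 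Part (ii) follows the same template: Parseval wavelets for such $(A,\Z^n)$ also reduce to (generalized) wavelet sets, and one connects those sets by the analogous symmetric-difference argument, again choosing trivial phase along the path; alternatively, one connects an arbitrary Parseval wavelet first to an orthonormal (MSF) one by deforming $|\hat\psi|$ from $\ch_W$ (Parseval-tiling) to $\ch_{W'}$ (orthonormal-tiling) and then applies part (i).

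The main obstacle I anticipate is establishing the reduction step rigorously --- proving that \eqref{ccw0} forces every orthonormal (resp.\ Parseval) wavelet to be of wavelet-set type. In dimension one this kind of rigidity is folklore for the analogous ``co-prime'' dilations, but in $\R^n$ one must be careful: the equations characterizing wavelets involve the translation lattice $\Z^n$ and the dilated lattices $(A^T)^j\Z^n$, and \eqref{ccw0} only says these dilated lattices meet $\Z^n$ trivially, not that they are mutually in ``general position'' in every sense one might want. The technical heart is to show that the $t_q$-equations (the higher-dimensional Calder\'on and orthogonality conditions) decouple across scales so that $\sum_{j}|\hat\psi(A^{Tj}\xi)|^2=1$ forces $|\hat\psi|$ to be a characteristic function; this uses that for a.e.\ $\xi$ at most one term in the relevant lattice sum is nonzero, which is exactly where \eqref{ccw0} and expansiveness combine. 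Once this is settled the topological part is a routine, if somewhat lengthy, adaptation of \cite{S}. A secondary, more bookkeeping-level obstacle is making the symmetric-difference-to-$L^2$ continuity and the phase-selection explicit enough that the path genuinely lands in the set of wavelets at every $t$, not just at the endpoints --- here one must ensure the interpolating sets $W_t$ satisfy \emph{both} tiling conditions for \emph{all} $t$, which is precisely the content one inherits from Speegle's construction.
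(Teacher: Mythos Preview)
Your treatment of part (i) is essentially the paper's own argument: under \eqref{ccw0} every orthonormal wavelet is MSF (this is Theorem~\ref{air}, proved in \cite{B2, CCMW}), and then the higher-dimensional version of Speegle's connectivity result \cite[Theorem~3.3]{S} for MSF wavelets finishes the job. The phase issue you mention is minor and handled exactly as you describe.

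Part (ii), however, has a genuine gap. Your reduction step is false: it is \emph{not} true that under \eqref{ccw0} every Parseval wavelet satisfies $|\hat\psi|=\ch_W$. The $t_q$-equations under \eqref{ccw0} do collapse, but what they give is only that $\supp\hat\psi$ \emph{packs} under $\Z^n$-translates together with the Calder\'on condition $\sum_j|\hat\psi((A^T)^j\xi)|^2=1$. For orthonormal wavelets one combines this packing with $\sum_k|\hat\psi(\xi+k)|^2=1$ (from orthonormality of integer translates) to force $|\hat\psi|\in\{0,1\}$; for Parseval wavelets that second identity is unavailable, and indeed any smooth Calder\'on partition of unity supported in a sufficiently thin dilation annulus (so that translates remain disjoint) yields a non-MSF Parseval wavelet. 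Your alternative route---deform $|\hat\psi|$ ``from $\ch_W$ to $\ch_{W'}$''---presupposes the same false starting point.

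The paper's proof of (ii) instead invokes \cite[Theorem~2.4]{B09b}, whose mechanism is different: one first characterizes the $L^2$-closure of the tight frame wavelets associated with $(A,\Z^n)$ as exactly those $f$ whose frequency support packs under $\Z^n$, and uses this to connect an arbitrary Parseval wavelet (possibly non-MSF) to an MSF Parseval wavelet; then one appeals to the connectivity of MSF Parseval wavelets due to Paluszy\'nski--\v Siki\'c--Weiss--Xiao \cite{PSWX2}, extended to expansive dilations in $\R^n$. So the missing ingredient in your proposal is precisely a path, staying inside Parseval wavelets, from a general $\psi$ with $|\hat\psi|$ taking values in $[0,1]$ to one with $|\hat\psi|\in\{0,1\}$.
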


\begin{proof}
By \cite{B2, CCMW} any expansive dilation $A$ satisfying \eqref{ccw0} admits only minimally supported frequency (MSF) wavelets. That is, any o.n. wave\-let associated with $A$ must necessarily be MSF, see also Theorem \ref{air}.
Thus, Problem \ref{con} for dilations $A$ satisfying \eqref{ccw0} is reduced to the connectivity of MSF wavelets in the setting of real expansive dilations. Fortunately, the one dimensional result of Speegle on the connectivity of MSF dyadic wavelets, Theorem \ref{con2}, also works in higher dimensional setting by \cite[Theorem 3.3]{S}. Combining these two results yields part (i). 

Part (ii) was shown in \cite[Theorem 2.4]{B09b}. Its proof relies on a fact characterizing $L^2$ closure of the set of all tight frame wavelets associated with a dilation $A$ satisfying \eqref{ccw0}. A function $f\in  L^2(\R^n)$ belong to this closure if and only if its frequency support $W=\supp \hat f$ satisfies
\begin{equation}\label{pack}
|W \cap (k+W)| = 0 \qquad\text{for all }k\in \Z^n \setminus \{0\}.
\end{equation}
This enables the reduction of the connectivity problem to the class of MSF Parseval wavelets. This are wavelets of the form $\hat \psi = \ch_W$, such that:
\begin{itemize}
\item the translates $\{W+k\}_{k\in\Z^n}$ pack $\R^n$, i.e., \eqref{pack} holds, and 
\item $\{(A^T)^j W\}_{j\in \Z}$ is a partition of $\R^n$ modulo null sets. 
\end{itemize}
By the result of Paluszy\'nski, \v Siki\'c, Weiss, and Xiao \cite[Theorem 4.2]{PSWX2}, the collection of all MSF Parseval wavelets is path connected. Although this result was shown in \cite{PSWX2} only for dyadic wavelets in one dimension, it can be generalized to higher dimensions as Speegle's generalizations \cite{S}  in the setting of expansive dilations.
\end{proof}

We finish by observing that Problem \ref{larson} has an immediate affirmative answer for dilations satisfying \eqref{ccw0}. Likewise, Problem \ref{bagg} also has an affirmative answer, for example, using intersection results in \cite{B09}. However, it needs to be stressed out that the space of negative dilates $V(\psi)$ does not need to be shift-invariant, see \cite{BS01}.

\subsection{Characterization of dilations}

One of the most fundamental problems in wavelet theory asks for a characterization of dilations for which o.n. wavelets exist. Although this problem has been explicitly stated by Speegle \cite{S03} and Wang \cite{Wa}, it has been studied earlier in late 1990's.

\begin{problem}\label{char}
 For what dilations $A \in GL_n(\R)$ and lattices $\Gamma \subset \R^n$, there exist an orthonormal wavelet associated with $(A,\Gamma)$? 
\end{problem}

A more concrete version of Problem \ref{char} asks for a characterization of dilations admitting MSF wavelets.

\begin{definition}
Let $(A,\Gamma)$ be a dilation-lattice pair.  We say that 
$W\subset \R^n$ is an {\it $(A,\Gamma)$-wavelet set} if  
\begin{itemize}
\item[(t)] $\{W+ \gamma \}_{k\in \Gamma}$ is a partition of $\R^n$ modulo null sets, and 
\item[(d)] $\{A^j W\}_{j\in \Z}$ is a partition of $\R^n$ modulo null sets. 
\end{itemize}
\end{definition}

In analogy to the one dimensional setting, frequency support of an MSF wavelet associated with $(A,\Gamma)$ is necessarily $(A^T,\Gamma^*)$-wavelet set, where $\Gamma^*$ is the dual lattice of $\Gamma$. Hence, we have the following variant of Problem \ref{char}.

\begin{problem}\label{charm}
Characterize pairs of dilations $A \in GL_n(\R)$ and lattices $\Gamma \subset \R^n$ for which wavelet set exists.
\end{problem}

Translation tiling (t) exists for any choice of a lattice $\Gamma$ and is known as a fundamental domain. The existence of dilation tiling (d) has been investigated by Larson, Schulz, Speegle, and Taylor \cite{LSST}. They have shown that there exists a measurable set $W\subset \R^n$ of finite measure satisfying (d) if and only if $|\det A| \ne 1$. Despite these two simple facts, the problem of simultaneous dilation and translation tiling remains open. 

The first positive result in this direction was obtained by Dai, Larson, and Speegle \cite{DLS}.

\begin{theorem}
If $A$ is an expansive matrix and $\Gamma$ is any lattice, then $(A,\Gamma)$-wavelet set exists.
\end{theorem}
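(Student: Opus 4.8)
The plan is to reduce the problem to the standard lattice $\Z^n$ and then build the desired set by an iterative reassembly (cut-and-paste) argument, which is essentially the Dai--Larson--Speegle construction.

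\emph{Reduction and set-up.} Write $\Gamma=P\Z^n$ with $P\in GL_n(\R)$. For a measurable set $W$ the linear change of variables $W\mapsto P^{-1}W$ turns translation tiling by $\Gamma$ into translation tiling by $\Z^n$ and turns $A^j$-dilation tiling into $(P^{-1}AP)^j$-dilation tiling; hence $W$ is an $(A,\Gamma)$-wavelet set if and only if $P^{-1}W$ is a $(P^{-1}AP,\Z^n)$-wavelet set, and $P^{-1}AP$ is again expansive. So it suffices to produce an $(A,\Z^n)$-wavelet set for every expansive $A$. Since $A$ is expansive, there is an inner-product norm on $\R^n$ in which $A$ strictly expands; letting $B$ be its open unit ball we get $\overline{A^{-1}B}\subset B$, the ellipsoids $A^jB$ increase to $\R^n$ as $j\to\infty$ and shrink to $\{0\}$ as $j\to-\infty$, and therefore $G:=B\setminus A^{-1}B$ is a \emph{bounded} dilation fundamental domain (that is, $\{A^jG\}_{j\in\Z}$ is a measurable partition of $\R^n\setminus\{0\}$) which stays away from the origin.

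\emph{Two reformulations.} First, $W$ is an $(A,\Z^n)$-wavelet set precisely when $W$ is translation-congruent to $[0,1)^n$ (there are measurable partitions $W=\bigsqcup_\ell W_\ell$ and $[0,1)^n=\bigsqcup_\ell(W_\ell+k_\ell)$ with $k_\ell\in\Z^n$) and dilation-congruent to $G$ (the same with translations replaced by powers of $A$); this is just the two tiling conditions read off orbit by orbit. Second, a set is dilation-congruent to $G$ if and only if it has the form $W=\bigsqcup_{j\in\Z}A^jT_j$ for some measurable partition $\{T_j\}_{j\in\Z}$ of $G$, because on a dilation fundamental domain almost every point has exactly one of its dilates in $W$, which assigns to each $y\in G$ a unique scale $j(y)$, and one sets $T_j=\{y\in G: j(y)=j\}$. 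So the task reduces to choosing the partition $\{T_j\}$ of $G$ so that $W=\bigsqcup_jA^jT_j$ additionally tiles $\R^n$ under $\Z^n$.

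\emph{The iteration.} I would build $W$ as the local limit of sets $W_m=\bigsqcup_jA^jT_j^{(m)}$, each automatically dilation-congruent to $G$, arranged so that (a) on the ball $B(0,m)$ the translation-covering multiplicity $x\mapsto\#\{k\in\Z^n: x+k\in W_m\}$ is identically $1$, and (b) $W_{m+1}$ agrees with $W_m$ on $B(0,m)$. To pass from $W_m$ to $W_{m+1}$: on $B(0,m+1)$ locate the over-covered region (multiplicity $\ge 2$) and the under-covered region (multiplicity $0$), which by (a) both lie in $B(0,m+1)\setminus B(0,m)$; for each over-covered piece, lying in some $A^jT_j^{(m)}$, \emph{reassign its dilation scale} to a scale $j'$ with $|j'|$ so large that expansiveness transports the piece outside a prescribed ball $B(0,r_{m+1})\supset B(0,m+1)$ and away from what has already been fixed; fill the under-covered region with dilates of still-unused portions of $G$, which are available because after finitely many stages only finitely many of the disjoint cells $A^jG$ have been touched while the $A^jG$ exhaust $\R^n\setminus\{0\}$. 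Taking $r_{m+1}$ large enough that every relocation made at stage $m$ lands outside it secures (b), so the pointwise limit $W:=\lim_mW_m$ exists; on each fixed ball $W$ eventually coincides with some $W_m$ of translation-multiplicity $1$ there which is never disturbed again, so $W$ is translation-congruent to $[0,1)^n$; and each $W_m$ is dilation-congruent to $G$ with only scale-permutations of partitions of $G$ ever performed, so — provided no portion of $G$ is relocated infinitely often or lost — the limiting scale assignment $y\mapsto j(y)$ is defined almost everywhere and $W$ is dilation-congruent to $G$. Thus $W$ is an $(A,\Z^n)$-wavelet set, and $PW$ is an $(A,\Gamma)$-wavelet set.

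\emph{Main obstacle.} The real work, and what makes this more than bookkeeping, is the simultaneous control demanded by the iteration: one must specify the radii $r_m$ and the reassigned scales so that each relocation both avoids the prescribed ball and avoids colliding with previously placed material or with the deficit region currently being filled, and so that in the limit every $\Z^n$-orbit is covered exactly once while the $T_j$ still partition all of $G$ — no part of $G$ may be moved infinitely often or stranded, and no translation orbit may be left uncovered. Making this precise requires a genuine convergence argument, typically by forcing the symmetric differences $W_m\triangle W_{m+1}$ to be eventually locally null and invoking a Borel--Cantelli-type estimate; this is the technical heart of the construction. (An abstract ``common fundamental domain'' theorem for the two group actions would also yield the result, but since $A$-dilation is not measure preserving the classical measure-preserving version of such a theorem does not apply directly, and one is led back to essentially the construction above.)
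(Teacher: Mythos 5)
The paper does not actually prove this theorem: it is quoted from Dai, Larson, and Speegle \cite{DLS}, so there is no in-paper argument to compare against. Your set-up is correct and is indeed how that proof begins: the reduction to $(P^{-1}AP,\Z^n)$, the ellipsoidal annulus $G=B\setminus A^{-1}B$ as a bounded dilation fundamental domain avoiding a neighborhood of the origin, and the two reformulations (translation-congruence to $[0,1)^n$, dilation-congruence to $G$ realized by a partition $\{T_j\}$ of $G$ via $W=\bigsqcup_j A^jT_j$). But everything after that is a strategy rather than a proof, and you say so yourself: the convergence argument that you label ``the technical heart'' --- specifying the radii $r_m$ and the reassigned scales, showing that almost every point of $G$ changes scale only finitely often, and showing that the limiting translation multiplicity is $1$ a.e. --- is the entire mathematical content of the theorem, and it is omitted.

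Beyond the omission, the iteration as described contains an inconsistency that must be repaired before any convergence argument could be run. You require each $W_m=\bigsqcup_j A^jT_j^{(m)}$ to be dilation-congruent to $G$, i.e.\ $\{T_j^{(m)}\}$ partitions \emph{all} of $G$ at every stage; then there are no ``still-unused portions of $G$'' with which to fill the under-covered region, and the justification offered (that only finitely many of the cells $A^jG$ have been touched) concerns the cells $A^jG$ in $\R^n$, not unassigned subsets of $G$, so it does not supply the needed material. Filling a deficit set $U\subset[0,1)^n$ means finding, for a.e.\ $x\in U$, some $k\in\Z^n$ whose $G$-representative can be reassigned to the scale that places it over $x+k$; any such reassignment removes that piece from wherever it currently sits and creates a new deficit there, just as exporting an over-covered piece to a large scale $A^{j'}$ deposits it at a new location where it may over-cover again. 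Controlling this cascade --- typically by arranging that the total measure moved at stage $m$ decays geometrically so that a Borel--Cantelli argument applies --- is precisely what \cite{DLS} does and what is missing here.
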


A significant progress toward resolving Problem \ref{charm} has been obtained by Speegle \cite{S03}, which was then carried by Ionescu and Wang \cite{IW}, who have given a complete answer in two dimensions. Here we present a simpler, yet equivalent, formulation of their main result \cite[Theorem 1.3]{IW}.

\begin{theorem}\label{siw}
Suppose $A\in GL_2(\R)$, $|\det{A}| > 1$, and $\Gamma \subset \R^2$ is a full rank lattice. There exists $(A,\Gamma)$-wavelet set $\iff$ 
\begin{equation}\label{eq:el-2d}
V \cap \Gamma = \{0\},
\end{equation}
where $V$ is the eigenspace corresponding to an eigenvalue $\lambda$ of $A$ satisfying $|\lambda| < 1$.
In particular, if all eigenvalues $|\lambda| \ge 1$, then $V=\{0\}$ and \eqref{eq:el-2d} holds automatically.
\end{theorem}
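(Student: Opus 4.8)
The plan is to establish the two implications after the standard reduction to $\Gamma=\Z^2$: conjugating by $P\in GL_2(\R)$ with $\Gamma=P\Z^2$ carries $(A,\Gamma)$-wavelet sets bijectively to $(P^{-1}AP,\Z^2)$-wavelet sets and sends the eigenline $V$ to $P^{-1}V$, so that \eqref{eq:el-2d} is preserved; hence we may assume $\Gamma=\Z^2$. A short case analysis on the eigenvalues then isolates the only nontrivial configuration. Since $|\det A|>1$, the matrix $A$ has an eigenvalue of modulus $<1$ precisely when it is diagonalizable over $\R$ with distinct real eigenvalues $\lambda,\mu$ satisfying $0<|\lambda|<1<1/|\lambda|<|\mu|$. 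In every other case $V=\{0\}$ and \eqref{eq:el-2d} holds vacuously: if $A$ is moreover expansive an $(A,\Z^2)$-wavelet set exists by the Dai--Larson--Speegle theorem quoted above, while the borderline subcase of an eigenvalue on the unit circle can be treated by a direct annulus construction. So we are left with the hyperbolic-type $A$ that expands by $\mu$ along an eigenline $U$ and contracts by $\lambda$ along the eigenline $V$.

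For the necessity of \eqref{eq:el-2d} I would argue by contradiction. Suppose $0\ne v\in V\cap\Z^2$ and $W$ is an $(A,\Z^2)$-wavelet set. Then $W$ is a $\Z^2$-fundamental domain, so $|W|=1$, and since $|\det A|>1$ the set $E:=\bigcup_{j\ge 1}A^{-j}W$ is a pairwise disjoint union of finite total measure $|E|=1/(|\det A|-1)$, while $W=\bigl(\bigcup_{j\ge 0}A^jW\bigr)\setminus\bigl(\bigcup_{j\ge 1}A^jW\bigr)$. The obstruction is that $A^{-j}$ dilates the $V$-direction by $|\lambda|^{-j}\to\infty$, so the pieces $A^{-j}W$ are pushed ever farther out along the line $\R v$; because $v\in\Z^2$, this forces the $\Z^2$-periodization of $E$ to be unbounded, which clashes with the exact identity $\sum_{\gamma\in\Z^2}\ch_W(\cdot+\gamma)\equiv 1$ once one tracks how the periodizations of the dilation-tiling pieces combine. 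Making ``$E$ necessarily escapes to infinity along $V$'' quantitative and extracting the contradiction is the delicate point in this direction, but it is ultimately a finite-measure/Fubini computation rather than a conceptual hurdle.

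For the sufficiency of \eqref{eq:el-2d}, which is the substantial half, I would run the Dai--Larson--Speegle machinery \cite{DLS}, as refined by Speegle \cite{S}: take a finite-measure set $N$ satisfying the dilation-tiling condition $(\mathrm{d})$, which exists because $|\det A|\ne 1$ \cite{LSST}, and build, by the back-and-forth scheme, a set $W$ that is simultaneously dilation-congruent to $N$ --- hence satisfies $(\mathrm{d})$ --- and translation-congruent to $[0,1)^2$ --- hence satisfies $(\mathrm{t})$; such a $W$ is the required wavelet set. The hypothesis $V\cap\Z^2=\{0\}$ enters because it says exactly that the contracting eigenline projects to a line of irrational slope, hence to a dense aperiodic orbit, in the torus $\R^2/\Z^2$, and this equidistribution is what should make the back-and-forth converge: it provides, at each stage, enough room inside a $\Z^2$-fundamental domain to absorb the parts of the dilation data that the contraction has spread along $V$. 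When $V\cap\Z^2\ne\{0\}$ this mechanism breaks and one is back to the obstruction of the previous paragraph.

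The main obstacle, then, is making this convergence work in the non-expansive regime. For expansive $A$ one may take $N$ to be a bounded annulus, its $\Z^2$-periodization is a bounded function, and the greedy redistribution of over- and under-covered cells terminates for essentially elementary reasons; when $A$ contracts along $V$ the relevant data are unavoidably unbounded in that direction, so the convergence of the redistribution must be driven by the density and equidistribution of the projected stable line in the torus rather than by boundedness. I expect setting up that redistribution carefully --- most plausibly stratified by dilation scale, with the expanding direction $U$ handled in the classical way and the irrational-slope density exploited only along $V$ --- and verifying that it outputs a measurable set to be the technical core of the theorem.
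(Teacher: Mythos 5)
First, a point of orientation: the paper does not prove Theorem~\ref{siw} at all --- it is quoted as \cite[Theorem 1.3]{IW}, building on \cite{S03} --- so there is no internal proof to compare against, and what you are really proposing is a re-proof of the Ionescu--Wang theorem. Your reduction to $\Gamma=\Z^2$ and your eigenvalue case analysis are correct, but both implications are left unproven exactly where the content lies. For necessity, the mechanism you describe does not close: the $\Z^2$-periodization of $E=\bigcup_{j\ge 1}A^{-j}W$ is merely an $L^1$ function on the torus with integral $1/(|\det A|-1)$, and its being unbounded contradicts nothing --- the identity $\sum_{\gamma}\ch_W(\cdot+\gamma)\equiv 1$ constrains $W$, not $E$. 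A working argument instead exploits the hypothesis $0\ne v\in V\cap\Z^2$ to change basis in $GL_2(\Z)$ so that $V=\R e_1$ and $A$ is upper triangular with $(1,1)$-entry $\lambda$, $|\lambda|<1$. Then the translation tiling, integrated in $x$ over a period, gives $\sum_{k}|W_{y+k}|_1=1$, hence $|W_y|_1\le 1$ for a.e.\ $y$ (sections parallel to $V$); the dilation gives $|(A^jW)_y|_1=|\lambda|^j|W_{\mu^{-j}y}|_1\le|\lambda|^j$, so $\sum_{j\ge0}|(A^jW)_y|_1<\infty$, while Tonelli and $|A^{-m}W|=|\det A|^{-m}$ force $\sum_{j<0}|(A^jW)_y|_1<\infty$ for a.e.\ $y$ as well. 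This contradicts the fact that $\{(A^jW)_y\}_{j\in\Z}$ must partition a.e.\ line parallel to $V$. That is the ``Fubini computation'' you gesture at, but it is a different computation from the one you set up, and the step where $V\cap\Z^2\ne\{0\}$ is actually used (the $GL_2(\Z)$ normalization making $V$ a lattice direction) is absent from your sketch.

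For sufficiency --- which, as you say, is the substantial half --- your proposal is a description of the strategy of \cite{DLS}, \cite{S03}, and \cite{IW} rather than a proof: ``the back-and-forth converges because the stable line has irrational slope'' is precisely the statement whose verification occupies those papers, and you explicitly defer it as ``the technical core.'' Two specific soft spots: (i) the borderline case of an eigenvalue of modulus exactly $1$ (e.g.\ $A$ conjugate over $\R$, but not over $\Z$, to $\mathrm{diag}(2,1)$) is not expansive, so neither \cite{DLS} nor a ``direct annulus construction'' applies; this case is part of what \cite{S03} and \cite{IW} had to handle and cannot be waved off. (ii) The existence of a finite-measure multiplicative tile $N$ from \cite{LSST} is not enough; one must control how the $\Z^2$-periodization of the redistributed pieces behaves along the contracting direction, and no candidate estimate is offered. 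In short, the statement is true and your high-level roadmap points at the right literature, but as a proof it has genuine gaps in both directions of the equivalence.
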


As an illustration of subtleness of Theorem \ref{siw} we give the following example.

\begin{example}
Let $\Gamma=\Z^2$ and $\alpha \in \R$. Then, the following holds true: 
\begin{itemize}
\item
MSF wavelet does not exist for  
$
A=\begin{bmatrix} 3 & 0 \\
\alpha & 1/2 \end{bmatrix}
$ for any $\alpha \in\R$.
\item
MSF wavelet exists for  
$A^T=\begin{bmatrix} 3 & \alpha \\
0 & 1/2 \end{bmatrix} \iff \alpha \in \R \setminus \mathbb Q$.
\end{itemize}
\end{example}

Lemvig and the first author \cite{BL} have shown the following result on the ubiquity of MSF wavelets. For any choice of dilation $A\in GL_n(\R)$ with $|\det A| \ne 1$, there exists $(A,\Gamma)$-wavelet set for almost every full rank lattice $\Gamma$. In fact, a slightly stronger result holds.

\begin{theorem}\label{umsf}
Let $A$ be any matrix in $GL_n(\R)$ with $|\det A|\ne 1$. Let $\Gamma \subset \R^n$ be any full rank lattice. Then there exists $(A,U\Gamma)$-wavelet set for almost every  (in the sense of Haar measure) orthogonal matrix $U\in O(n)$.
\end{theorem}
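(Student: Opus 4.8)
The plan is to construct, for each fixed orthogonal $U$, an $(A,U\Gamma)$-wavelet set by a Dai--Larson--Speegle style redistribution, and to show that the only obstruction to this is a general-position condition on the lattice $U\Gamma$ which holds for Haar-a.e.\ $U\in O(n)$. We first reduce to $|\det A|>1$: since $\{A^jW:j\in\Z\}=\{(A^{-1})^jW:j\in\Z\}$, a set is an $(A,\Lambda)$-wavelet set exactly when it is an $(A^{-1},\Lambda)$-wavelet set, and $|\det A^{-1}|=|\det A|^{-1}\ne 1$. With $|\det A|>1$, the matrix $A$ cannot have all its eigenvalues of modulus $\le1$, so the ``non-expanding part'' of $A$ — the sum $V_{\le1}$ of the generalized eigenspaces for eigenvalues $\lambda$ with $|\lambda|\le1$, together with its natural subspaces such as $V_{<1}$ and $V_{=1}$ — is a \emph{proper} $A$-invariant subspace, of dimension strictly less than $n$. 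We also record that, $|\det A|\ne1$ being in force, there is by \cite{LSST} a bounded measurable set $E$ bounded away from $0$ with $\{A^jE\}_{j\in\Z}$ partitioning $\R^n$ modulo null sets, and since $A^j(tE)=tA^jE$ dilation tiles of every prescribed measure exist; this is the slack the construction exploits.

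Now fix $U\in O(n)$, set $\Lambda=U\Gamma$, and let $Q$ be a measurable fundamental domain for translations by $\Lambda$. Starting from $W_0=Q$ one iterates: at step $m$ locate the over-counted part of $W_m$ — points sharing a $\Lambda$-translation orbit with another point of $W_m$, and points on $A$-orbits met more than once by $W_m$ — and transport these pieces along their $A$-orbits to the nearest vacant positions, obtaining a new $\Lambda$-translation fundamental domain $W_{m+1}$. If $\sum_m|W_{m+1}\triangle W_m|<\infty$, then $W=\lim_m W_m$ is simultaneously a $\Lambda$-translation tile and an $A$-dilation tile, i.e.\ an $(A,U\Gamma)$-wavelet set. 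Convergence is controlled by the transversality of $\Lambda$ to the invariant subspaces that govern the dilation dynamics; the requirement is that $\Lambda\cap V=\{0\}$ for each subspace $V$ in a certain finite family $\mathcal V$ of proper $A$-invariant subspaces attached to the non-expanding spectral part of $A$ (together with the finitely many proper invariant subspaces carrying the exceptional null set of the dilation tiling). Call this condition $(\star_U)$; for $n=2$ it is precisely the condition $V\cap\Lambda=\{0\}$ of Theorem~\ref{siw} with $V$ the contracting eigenspace.

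It remains to verify that $(\star_U)$ holds for a.e.\ $U$. Only finitely many proper $A$-invariant subspaces $V_1,\dots,V_r$ are involved, each of dimension $<n$. For a fixed $\gamma\in\Gamma\setminus\{0\}$, the orbit map $U\mapsto U\gamma$ is a smooth submersion of $O(n)$ onto the sphere $\{x:|x|=|\gamma|\}$, under which $V_i$ pulls back to a sphere of dimension $\dim V_i-1<n-1$; hence $\{U\in O(n):U\gamma\in V_i\}$ is Haar-null. Taking the union over $i=1,\dots,r$ and over the countable set $\Gamma\setminus\{0\}$, we see that $(\star_U)$ fails only on a Haar-null subset of $O(n)$, which proves the theorem.

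I expect the main obstacle to be the convergence of the redistribution in the non-expansive regime: when $A$ has eigenvalues on or inside the unit circle, mass pushed along an $A$-orbit need not escape to $\infty$ and can pile up near $0$ or near an invariant subspace, so one must use $(\star_U)$ to guarantee that far-out vacant slots are always available and that the measure transported at step $m$ decays geometrically. Pinning down the exact family $\mathcal V$ and proving this geometric decay is where the work lies; the reduction to $|\det A|>1$ and the genericity step are routine by comparison.
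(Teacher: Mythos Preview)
The paper does not give a self-contained proof of this theorem; it only sketches the approach of \cite{BL}. That approach is quite different from yours: it isolates a quantitative sufficient condition, the \emph{lattice counting estimate}
\[
\#\bigl(\Lambda\cap A^j(B(0,1))\bigr)\le C\max(1,|\det A|^j)\qquad\text{for all }j\in\Z,
\]
shows (via the characterizing equations) that this estimate implies existence of an $(A,\Lambda)$-wavelet set, and then proves by techniques from geometry of numbers that the estimate holds for $\Lambda=U\Gamma$ for Haar-a.e.\ $U\in O(n)$. The almost-everywhere step is the hard one and is \emph{not} a countable-union argument; it requires genuine Diophantine input.

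Your proposal has a real gap at exactly the point you flag yourself. You assert that convergence of the Dai--Larson--Speegle redistribution is governed by a condition $(\star_U)$ of the form ``$U\Gamma\cap V=\{0\}$ for each $V$ in a finite family $\mathcal V$ of proper $A$-invariant subspaces,'' and then observe that any such condition is generically satisfied. But you do not identify $\mathcal V$, and more importantly you give no argument that the obstruction \emph{is} of this qualitative subspace-avoidance type in dimensions $n\ge3$. In fact this would come close to resolving Problem~\ref{charm}, which is open beyond $n=2$; Theorem~\ref{siw} is specifically two-dimensional, and there is no known higher-dimensional analogue of the Ionascu--Wang characterization. Concretely, take $A$ diagonal with eigenvalues $8,\tfrac12,\tfrac12$ and let $V$ be the contracting $2$-plane. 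Even when $U\Gamma\cap V=\{0\}$, the sets $A^{-j}(B(0,1))$ are thin slabs of thickness $8^{-j}$ and radius $2^j$ along $V$, and the number of lattice points they contain is controlled not by whether $U\Gamma$ meets $V$ but by how well $U\Gamma$ is Diophantine-approximated by $V$. This is precisely what the lattice counting estimate quantifies, and why the genericity step in \cite{BL} requires geometry of numbers rather than the easy ``proper-subspace meets a sphere in a lower-dimensional sphere'' argument you give. As written, your sketch trades the actual analytic difficulty for an unproved structural claim.
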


The proof of Theorem \ref{umsf} relies on techniques from geometry of numbers and involves estimates on a number of lattice points in dilates of the unit ball of the form $A^j(B(0,1))$, where $j\in \Z$. Since $A^j(B(0,1))$ is a convex symmetric body, this number is at least its volume up to a proportionality constant 
depending solely on the choice of $\Gamma$. If the corresponding upper bound holds
\begin{equation}\label{lce}
\# | \Gamma \cap A^{j}(B(0,1))| \le C \max(1,|\det A|^j) \qquad\text{for all }j\in \Z,
\end{equation}
then many results in wavelet theory, such as characterizing equations, hold. The main result in \cite{BL} shows that \eqref{lce} holds for almost every choice of a lattice $\Gamma$, which is then used to prove Theorem \ref{umsf}.

The expectation is that the answers to Problems \ref{char} and \ref{charm} are actually the same. In other words, if there exists an o.n. wavelet associated with $(A,\Gamma)$, then there also exists an MSF wavelet associated with $(A,\Gamma)$. However, this is unknown since even more basic problem involving Calder\'on's formula remains open.

\subsection{Calder\'on's formula}

Problem \ref{cal} was implicitly raised by Speegle \cite{S} and explicitly formulated in \cite{BL}.
 
\begin{problem}\label{cal}
Does Calder\'on's formula 
\begin{equation}\label{cal0}
\sum_{j\in\Z} |\hat \psi((A^T)^j \xi)|^2=1 \qquad\text{for a.e. }\xi \in \R^n
\end{equation}
hold for any orthonormal (or Parseval) wavelet $\psi$ associated with $(A,\Gamma)$? 
\end{problem}

By \cite{BL} Problem \ref{cal} has affirmative answer for pairs $(A,\Gamma)$ such that its dual pair $(A^T,\Gamma^*)$ satisfies the lattice counting estimate \eqref{lce}. Indeed, \eqref{cal0} is the first of two equations characterizing Parseval wavelets, which has been studied by a large number of authors both for expansive \cite{B0, Ca, CCMW, FGWW, RS} and non-expansive dilations \cite{GL, HLW}. The second equation states that for all $\alpha \in \Gamma^*$
\begin{equation}\label{tq}
\sum_{j\in \Z, (A^T)^{-j}\alpha \in \Gamma^*} \psi((A^T)^{-j}\xi) \overline{\hat \psi((A^T)^{-j}(\xi+\alpha))} = \delta_{\alpha,0} \qquad\text{for a.e. }\xi\in \R^n.
\end{equation}
The expectation is that the equations \eqref{cal0} and \eqref{tq} characterize Parseval wavelets for all possible pairs $(A,\Gamma)$. This has been shown for expansive dilations \cite{CCMW}, dilations expanding on a subspace \cite{GL, HLW}, and more generally satisfying the lattice counting estimate \eqref{lce}. However, Problem \ref{cal} remains as a formidable obstacle toward this goal. An example in \cite[Example 3.1]{BR1} and more recent work \cite{FL} are an evidence of looming difficulties.

\subsection{Well-localized wavelets}

A variant of Problem 3.1 asks for a characterization of dilations for which well-localized o.n. wavelets exist. We say that a function $\psi \in L^2(\R^n)$ is {\it well-localized} if both $\psi$ and $\hat \psi$ have polynomial decay. That is,  for some large $N>0$, we have
\[
\psi(x) = O(|x|^{-N}) \text{ as } |x| \to \infty 
\qquad\text{and}\qquad
\hat \psi(\xi) = O(|\xi|^{-N}) \text{ as } |\xi| \to \infty.
\] 

\begin{problem}\label{dau}
Let $\Gamma=\Z^n$ be the lattice of translates.
For what expansive dilations $A$ do there exist well-localized wavelets (possibly with multiple generators)?
\end{problem}

Note that in Problem \ref{dau} it is imperative that we allow multiple generators of a wavelet system. Indeed, suppose that $A$ is an integer expansive matrix, i.e., \eqref{inv} holds. If $\Psi=\{\psi^1,\ldots, \psi^L\} \subset L^2(\R^n)$ is a well-localized o.n. wavelet associated with an integer dilation, then the number $L$ of generators  must be divisible by $|\det A|-1$. This is a consequence of the fact that the wavelet dimension function defined as
\[
\mathscr D_\Psi(\xi):= \sum_{l=1}^L \sum_{j=1}^\infty \sum_{k\in\Z^n}|\hat\psi^l((A^T)^j(\xi+k))|^2 
\]
satisfies a higher dimensional analogue of Theorem \ref{dim}. In particular, $\mathscr D_\Psi$ is integer-valued and satisfies
\[
\int_{[0,1]^n} \mathscr D_\Psi(\xi) d\xi = \frac{L}{|\det A|-1}.
\]
If $\Psi$ consists of well-localized functions, then the series defining $\mathscr D_\Psi$ converges uniformly and hence it must be constant. Thus, $L$ is divisible by $|\det A|-1$.

Daubechies \cite[Chapter 1]{Dau} asked whether ``there exist orthonormal wavelet bases
(necessarily not associated with a multiresolution analysis),
with good time-frequency localization, and with irrational $a$.''
A partial answer was given by Chui and Shi \cite{CS} who
showed that all wavelets associated with dilation factors $a$ such that $a^j$ is
irrational for all $j\ge1$ must be minimally supported frequency (MSF). A complete answer was given by the  first author \cite{B3} who proved the following result.

\begin{theorem}\label{daub} Suppose $a$ is an irrational dilation factor,
$a>1$. If
$\Psi=\{\psi^1,\ldots,\psi^L\}$ is an orthonormal wavelet associated 
with $a$, then at least one of $\psi^l$ is poorly localized in time. 
More
precisely, there exists $l=1,\ldots,L$ such that for any $\delta>0$, 
$$\limsup_{|x| \to \infty} 
|\psi^l(x)| |x|^{1+\delta}=\infty.$$
\end{theorem}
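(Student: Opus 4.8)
The plan is to argue by contradiction: suppose every $\psi^l$ is well-localized in time, say $|\psi^l(x)| = O(|x|^{-1-\delta})$ for some fixed $\delta > 0$, and derive a contradiction with the irrationality of $a$. The engine behind the argument is the periodization identity for orthonormal wavelet systems. Since $\{\psi^l_{j,k}\}$ is an orthonormal basis, the quadratic expressions $t_\alpha(\xi) = \sum_{l=1}^L \sum_{j \in \Z} \hat\psi^l(a^{-j}\xi)\overline{\hat\psi^l(a^{-j}(\xi + \alpha))}$ must vanish for $\alpha \notin a^{\Z}$ (more precisely, the full characterizing equations for an o.n.\ wavelet associated with the dilation factor $a$ and integer translates reduce, because $a^j \notin \Z$ for $j \neq 0$, to the single Calder\'on-type equation $\sum_{j\in\Z}|\hat\psi^l(a^{-j}\xi)|^2$ summing to $1$ together with the orthogonality of distinct scales). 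The key point is that good time decay forces $\hat\psi^l$ to be continuous, and in fact the series $\sum_j |\hat\psi^l(a^{-j}\xi)|^2$ then converges locally uniformly away from $0$; being continuous and forced by the dilation equation to be invariant-in-sum it must be constant, which already recovers the Calder\'on equation in a strong pointwise sense.

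Next I would extract the contradiction from the translation side. Here is where irrationality of $a$ does its work. Consider the ``bracket'' function $[\hat\psi^l, \hat\psi^l](\xi) = \sum_{k \in \Z} |\hat\psi^l(\xi + k)|^2$; orthonormality of the integer translates at scale $0$ inside the appropriate subspace, combined with the multiresolution-type structure forced on the negative-dilate spaces, constrains these brackets. Under the assumed decay, $\hat\psi^l$ is continuous and decays like $O(|\xi|^{-1-\delta})$, so $[\hat\psi^l,\hat\psi^l]$ is a continuous $1$-periodic function. The dilation structure, however, relates $[\hat\psi^l,\hat\psi^l]$ evaluated along orbits $\{a^j \xi + k\}$, and because $a$ is irrational these orbits are dense mod $1$; pushing a continuous periodic function through a dense set of dilation-translation constraints forces it, and ultimately $\hat\psi^l$ itself, into a rigidity that is incompatible with the integrability/normalization $\|\psi^l\|_2 = 1$ unless $|\hat\psi^l|$ is (up to the support constraints) a characteristic function — i.e.\ the wavelet is MSF — and then one invokes the Chui--Shi result (or the argument underlying it) that an MSF wavelet for an irrational factor cannot have a continuous, decaying Fourier transform because a genuine wavelet set for an irrational dilation cannot be a finite union of intervals, so $\ch_W$ cannot be continuous-up-to-decay. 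That is the desired contradiction.

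The main obstacle I expect is the middle step: turning ``$\hat\psi^l$ continuous and decaying'' plus ``the characterizing equations hold'' into ``$|\hat\psi^l|$ is a characteristic function.'' This is not automatic — it is precisely the content that separates the irrational case from the dyadic case, where smooth non-MSF wavelets (Lemari\'e--Meyer) abound. The leverage must come from the fact that for irrational $a$ the lattice $\Z$ and its dilates $a^j \Z$ intersect only at $0$, so the only surviving term among the translation-equations is $\alpha = 0$; this decouples the scales and means that each ``layer'' $|\hat\psi^l(a^{-j}\cdot)|^2$ must individually tile in a way that, by a density/continuity argument, admits no room for a smooth transition region. Making this rigorous requires a careful accounting of how the finitely many generators $\psi^1,\dots,\psi^L$ share the load, and controlling the overlap function $\sum_k \hat\psi^l(\xi+k)\overline{\hat\psi^m(\xi+k+\text{shift})}$ across different $l,m$; the combinatorics of $L$ generators together with the divisibility constraint ($|\det A|-1$ analogue, here trivial since the factor is irrational so no such integer constraint exists, which is itself the tension) is the delicate part.

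Finally, I would record the quantitative sharpening: the statement claims not merely "not all well-localized" but the precise blow-up $\limsup_{|x|\to\infty} |\psi^l(x)||x|^{1+\delta} = \infty$ for every $\delta > 0$. This follows by running the contradiction argument with the weaker hypothesis $|\psi^l(x)| = O(|x|^{-1-\delta})$ for a single $\delta > 0$ in place of fast decay: the only property of the decay actually used above is that it is strong enough to make $\hat\psi^l$ continuous and its periodizations continuous, and $O(|x|^{-1-\delta})$ already puts $\psi^l \in L^1(\R)$, hence $\hat\psi^l$ continuous, and makes $\sum_k |\hat\psi^l(\xi+k)|^2$ converge uniformly. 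So the entire argument goes through verbatim under that weaker hypothesis, which is exactly the negation of the conclusion. I would present the proof in that order — reduce to the $L^1$-decay hypothesis first, establish continuity and the pointwise Calder\'on equation, then exploit irrationality to force the MSF conclusion, then finish with the (known) incompatibility of MSF with continuity for irrational factors.
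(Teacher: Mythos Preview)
The paper does not actually prove Theorem~\ref{daub}; it is quoted from \cite{B3} with no argument supplied, so there is no in-paper proof to compare against. Assessing your outline on its own terms, there are two concrete problems.

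First, you claim that the hypothesis $|\psi^l(x)|=O(|x|^{-1-\delta})$ yields $\hat\psi^l(\xi)=O(|\xi|^{-1-\delta})$ and hence uniform convergence of the bracket $\sum_k|\hat\psi^l(\xi+k)|^2$. This is false: time decay gives \emph{H\"older continuity} of $\hat\psi^l$ (of any order $<\delta$), not frequency decay. Continuity of $\hat\psi^l$ by itself does not make its periodization continuous, and your final paragraph repeats the same error when it asserts that $\psi^l\in L^1$ ``makes $\sum_k|\hat\psi^l(\xi+k)|^2$ converge uniformly.'' What the hypothesis genuinely buys you is that each $\hat\psi^l$ is H\"older continuous, and that is indeed the regularity the argument in \cite{B3} exploits---but not through brackets.

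Second, the reduction ``well-localized $\Rightarrow$ MSF $\Rightarrow$ contradiction with continuity'' cannot proceed by the density mechanism you sketch. You write that ``because $a$ is irrational these orbits are dense mod $1$,'' but that is essentially the Chui--Shi hypothesis that $a^j$ is irrational for every $j\ge1$, and it fails for, e.g., $a=\sqrt2$, where $a^2=2$ and (by the converse to Theorem~\ref{air} mentioned in the paper) non-MSF wavelets genuinely exist. Theorem~\ref{daub} must cover \emph{all} irrational $a>1$, including those with a rational power, so any argument that ultimately rests on orbit density mod $1$ or on ``$\Z\cap a^j\Z=\{0\}$ for all $j$'' cannot close the gap. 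You correctly flag the middle step as the obstacle, but the proposal offers no mechanism that survives the $a=\sqrt2$ case.
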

On the other hand, Auscher \cite{Au1} proved that there exist Meyer wavelets (smooth and compactly supported in the Fourier domain) for every rational dilation factor.  Combining Auscher's result with Theorem \ref{daub} gives a complete answer to Problem \ref{dau} in one dimensional case. Well-localized orthonormal wavelets can only exist for rational dilation factors and they are non-existent for irrational dilations.

In higher dimensions Problem \ref{dau} remains a challenging open problem. A partial answer was given by the first author in \cite{B2}.
\begin{theorem}\label{air}
Suppose $A$ is an expanding matrix such that \eqref{ccw0} holds.
If $\Psi=\{\psi^1, \ldots,\psi^L\}$ is an o.n. wavelet associated with $A$, then
$\Psi$ is combined MSF, i.e., $\bigcup_{l=1}^L \supp \hat \psi^l$ has a minimal possible measure (equal to $L$).
\end{theorem}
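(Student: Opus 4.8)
The plan is to deduce Theorem~\ref{air} from the characterization of Parseval frame wavelets for expansive dilations — Calder\'on's equation \eqref{cal0} together with the equations \eqref{tq} — by observing that \eqref{ccw0} kills all but one term in each equation \eqref{tq}, and then running a short fiberwise linear-algebra argument. Write $\Phi(\xi)=(\hat\psi^1(\xi),\dots,\hat\psi^L(\xi))\in\mathbb C^L$ and $W=\bigcup_{l=1}^L\supp\hat\psi^l=\{\xi:\Phi(\xi)\neq0\}$ modulo null sets. Since $\Psi$ is an o.n. wavelet it is a Parseval frame with unit-norm generators, so (with $\Gamma=\Gamma^*=\Z^n$ and $L$ generators) the cited characterization yields
\begin{equation}\label{pp-cal}
\sum_{l=1}^L\sum_{j\in\Z}|\hat\psi^l((A^T)^j\xi)|^2=1\qquad\text{a.e.},
\end{equation}
and, for every $\alpha\in\Z^n$,
\begin{equation}\label{pp-tq}
\sum_{l=1}^L\sum_{j\,:\,(A^T)^{-j}\alpha\in\Z^n}\hat\psi^l((A^T)^{-j}\xi)\,\overline{\hat\psi^l((A^T)^{-j}(\xi+\alpha))}=\delta_{\alpha,0}\qquad\text{a.e.}
\end{equation}

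The key step is the following: if $\alpha\in\Z^n\setminus\{0\}$ and $(A^T)^{-j}\alpha\in\Z^n$, then $\alpha\in\Z^n\cap(A^T)^j\Z^n$, so \eqref{ccw0} forces $j=0$. Hence for every $\alpha\in\Z^n\setminus\{0\}$ equation \eqref{pp-tq} reduces to its single term $j=0$, which reads $\langle\Phi(\xi),\Phi(\xi+\alpha)\rangle_{\mathbb C^L}=\sum_{l=1}^L\hat\psi^l(\xi)\overline{\hat\psi^l(\xi+\alpha)}=0$. As $\Z^n$ is countable, for a.e. $\xi$ the vectors $\{\Phi(\xi+m):m\in\Z^n\}$ are pairwise orthogonal in $\mathbb C^L$, hence at most $L$ of them are nonzero; since $\Phi(\xi+m)\neq0$ exactly when $\xi+m\in W$, this gives $\sum_{m\in\Z^n}\ch_W(\xi+m)\le L$ a.e., and integrating over $[0,1)^n$ yields $|W|\le L$.

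For the matching lower bound, note that for each fixed $l$ the system $\{\psi^l(\cdot-k):k\in\Z^n\}$ is orthonormal (a subfamily of an orthonormal basis), so $\sum_{m\in\Z^n}|\hat\psi^l(\xi+m)|^2=1$ a.e.; summing over $l$ gives $\sum_{m\in\Z^n}|\Phi(\xi+m)|^2=L$ a.e. By \eqref{pp-cal} the $j=0$ summand satisfies $|\Phi(\xi)|^2\le1$ a.e., so $|\Phi|^2\le\ch_W$ pointwise, whence $\sum_{m\in\Z^n}\ch_W(\xi+m)\ge L$ a.e.; integrating gives $|W|\ge L$. Combining the two bounds, $|W|=L$, i.e. $\Psi$ is combined MSF.

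The step I expect to require the most care is the very first one: confirming that the characterization of Parseval frame wavelets in the form \eqref{pp-cal}--\eqref{pp-tq} is genuinely available for an arbitrary real expansive matrix $A$, the lattice $\Z^n$, and finitely many generators, with the $j$-summation in \eqref{pp-tq} indexed exactly as written, so that \eqref{ccw0} may be invoked term by term. Everything after that reduction is elementary linear algebra on the fibers $\{\Phi(\xi+m)\}_{m\in\Z^n}$.
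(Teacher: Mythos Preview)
The paper does not supply its own proof of Theorem~\ref{air}; the result is quoted from \cite{B2}. Your argument is correct and is essentially the one given there: use \eqref{ccw0} to collapse the characterizing equation \eqref{tq} to its $j=0$ term for every nonzero $\alpha\in\Z^n$, obtain pairwise orthogonality of the fibers $\{\Phi(\xi+m)\}_{m\in\Z^n}$ in $\mathbb C^L$, and combine this with Calder\'on's equation and the orthonormality of integer translates of each $\psi^l$ to pin $|W|=L$. The external input you flag --- availability of the characterization \eqref{cal0}--\eqref{tq} for arbitrary real expansive $A$, lattice $\Z^n$, and finitely many generators, with the $j$-sum in \eqref{tq} indexed precisely as written --- is indeed established in the literature the paper cites (e.g., \cite{B0, CCMW}), so the reduction is legitimate.
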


Since any combined MSF wavelet must satisfy
\[
\sum_{l=1}^L |\hat \psi^l(\xi)|^2 = \chi_{W}(\xi) \qquad\text{for a.e. }\xi,
\]
for some measurable set $W \subset \R^n$, at least one $\psi^l$ is not be well-localized in time. Moreover, Speegle and the first author \cite{BS01} showed that Theorem \ref{air} is sharp, in the sense that it has a converse. The converse result states that if all wavelets associated with an expanding dilation $A$ are MSF, then  $A$ must necessarily satisfy \eqref{ccw0}.

To obtain a satisfactory (even partial) answer to Problem \ref{dau}, it is also necessary to construct well-localized wavelets for large classes of expansive dilations. A natural class of well-localized wavelets are $r$-regular wavelets introduced by Meyer \cite{Me}. We recall that a function $\psi$ is $r$-regular, where $r=0,1,2, \ldots$, or $\infty$, if $\psi$ is $C^r$ with polynomially decaying partial derivatives of orders $\le r$, 
\[
\partial^\alpha \psi(x) =O(|x|^{-N}) \text{ as }|x|\to \infty \qquad\text{for all }|\alpha| \le r, \ N>0.
\]

For any integer dilation $A$, which supports a self-similar tiling of $\R^n$, 
Strichartz \cite{Sti} constructed $r$-regular wavelets for all $r\in\N$. However, 
there are examples in $\R^4$ of dilation matrices without self-similar 
tiling \cite{LW1,LW2}. In \cite{B1} the first author has shown that 
for every integer dilation and $r\in\N$, 
there is an $r$-regular wavelet basis with 
an associated $r$-regular multiresolution analysis. However, the question of existence of $\infty$-regular wavelets in higher
dimensions is still open.

\subsection{Meyer wavelets for integer dilations}

Schwartz class is defined as a collection of all $\infty$-regular functions on $\R^n$.

\begin{problem}\label{meyer}
Do Schwartz class wavelets exist for integer expansive dilations $A$ and lattice $\Gamma =\Z^n$?
\end{problem}

One dimensional wavelet in the Schwartz class is a famous example of Meyer \cite{Me}, which can be adapted to any integer dilation factor $a\ge 2$. In two dimensions an affirmative answer to Problem \ref{meyer} was given by Speegle and the first author \cite{BS02}.

\begin{theorem} For every expansive $2\times 2$ integer dilation $A$,
there exists an o.n. wavelet consisting of
$(|\det A|-1)$ band-limited Schwartz class functions.
\end{theorem}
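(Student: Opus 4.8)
The plan is to realise $\Psi$ as the wavelet family of a band-limited multiresolution analysis whose scaling function already lies in the Schwartz class $\mathcal S(\R^2)$. Since $A\Z^2\subset\Z^2$, an MRA for $(A,\Z^2)$ built from a single scaling function is completed by exactly $|\det A|-1$ wavelets, which will give the asserted count. The construction splits into two tasks: (i) producing a $C^\infty$, compactly supported $\hat\varphi$ generating an MRA $\{V_j\}$ for $(A,\Z^2)$; and (ii) passing from $\varphi$ to the $|\det A|-1$ wavelets by a smooth completion of the polyphase matrix. I expect task (i) to carry the whole of the difficulty and task (ii) to be essentially formal.

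Write $q=|\det A|$. For (i) I would look for a $C^\infty$, $\Z^2$-periodic low-pass filter $m_0$ with $m_0\equiv 1$ near $0$, satisfying the quadrature identity $\sum_{d}|m_0(\xi+(A^T)^{-1}d)|^2=1$ with $d$ running over a transversal of $\Z^2/A^T\Z^2$, and such that $\hat\varphi(\xi):=\prod_{j\ge 1}m_0((A^T)^{-j}\xi)$ has compact support; then (M1)--(M2) and (M5') are immediate, (M5) is the quadrature identity, (M3) is automatic, and (M4) follows from $\hat\varphi(0)=1$. Smoothness of $\hat\varphi$ is free, since $m_0\equiv 1$ near $0$ makes the infinite product locally a finite product of $C^\infty$ functions. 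Moreover $\hat\varphi$ vanishes outside $\bigcap_{j\ge 1}(A^T)^j(E+\Z^2)$, where $E$ is a bounded open neighbourhood of $0$ with pairwise disjoint $\Z^2$-translates and $\{m_0\ne 0\}=E+\Z^2$. Thus (i) reduces to a problem in planar geometry: to find such an $E$ that is forward invariant, $E\subseteq A^T E$, for which the intersection above is bounded — this is not automatic; in dimension one it is exactly the phenomenon behind the classical Meyer choice $E=(-1/3,1/3)$ — and for which $\{A^T E+k\}_{k\in\Z^2}$ covers $\R^2$, which is precisely what makes the quadrature identity solvable. One then builds $m_0$ from a smooth partition of unity adapted to the coset structure inside a collar of bounded multiplicity, if necessary reaching the exact quadrature identity by a limiting procedure of cascade type under which compact support and $C^\infty$-smoothness survive.

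Constructing $E$ for an arbitrary expansive $2\times 2$ integer matrix is the main obstacle, and it is the only step that uses $n=2$. There is no $GL_2(\Z)$-normal form for such matrices, so I would argue by cases on the spectrum of $A^T$. If $A^T$ has a complex-conjugate pair of eigenvalues $re^{\pm i\theta}$, then with respect to a suitable Euclidean structure it is a dilation by $r>1$ composed with a rotation, and a slightly enlarged ellipse centred at $0$ and adapted to that structure is forward invariant with exactly the right scaling behaviour; if $A^T$ has two real eigenvalues — distinct, or sharing a nontrivial Jordan block — one builds, in coordinates adapted to the real canonical form, a slanted box graded to the (possibly unequal and, in the Jordan case, polynomially shearing) expansion rates. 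In each case the delicate point is to correct this model region so that it becomes compatible with the fixed lattice $\Z^2$, keeping simultaneously forward invariance, boundedness of $\bigcap_{j\ge1}(A^T)^j(E+\Z^2)$, and the covering property of $\{A^T E+k\}$. This reconciliation of the $\R$-linear dilation geometry with the integer lattice is exactly what has no known analogue for $n\ge3$, which is why Problem~\ref{meyer} remains open there; in the plane there is enough room to carry it out, and the finitely many interfaces of the resulting $E$ make the adapted partition of unity, hence $m_0$, routine.

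Task (ii) is then unobstructed. From the refinement equation $\hat\varphi(A^T\xi)=m_0(\xi)\hat\varphi(\xi)$ I would define $\hat\psi^{l}(\xi):=m_l((A^T)^{-1}\xi)\,\hat\varphi((A^T)^{-1}\xi)$ for $l=1,\dots,q-1$, where the $C^\infty$ $\Z^2$-periodic high-pass filters $m_1,\dots,m_{q-1}$ complete the unit vector $\bigl(m_0(\xi+(A^T)^{-1}d)\bigr)_{d}$ to a $C^\infty$ unitary $q\times q$ matrix. Such a completion exists because any continuous — hence, after approximation, smooth — map of the $2$-torus into the unit sphere $S^{2q-1}\subset\mathbb C^q$ is null-homotopic for $q\ge2$ (since $\pi_1(S^{2q-1})=\pi_2(S^{2q-1})=0$), so the pullback along it of the bundle $U(q-1)\to U(q)\to S^{2q-1}$ is trivial and admits a smooth section. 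Each $\hat\psi^l$ is then $C^\infty$ with compact support, so $\psi^l\in\mathcal S(\R^2)$ is band-limited, and unitarity of the matrix together with the MRA structure gives, by the standard Mallat argument, that $\{\psi^l_{j,k}:l=1,\dots,q-1,\ j\in\Z,\ k\in\Z^2\}$ is an orthonormal basis of $L^2(\R^2)$. This produces an o.n.\ wavelet $\Psi$ consisting of $q-1=|\det A|-1$ band-limited Schwartz functions, as required.
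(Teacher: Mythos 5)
Your overall architecture --- a band-limited MRA whose scaling function has a $C^\infty$ compactly supported Fourier transform, followed by a smooth unitary completion of the modulation matrix, unobstructed because the relevant map of the $2$-torus into $S^{2q-1}$ (with $q=|\det A|\ge 2$) is null-homotopic --- is indeed the strategy of the cited source \cite{BS02}; note that the present paper only quotes this theorem and contains no proof of it. However, your write-up has a genuine gap exactly where you yourself locate the difficulty: the construction, for an \emph{arbitrary} expansive integer $2\times 2$ matrix, of a bounded neighbourhood $E$ of the origin with pairwise disjoint $\Z^2$-translates, with $E\subseteq A^T E$, with $\{A^TE+k\}_{k\in\Z^2}$ covering $\R^2$, and with $\bigcap_{j\ge 1}(A^T)^j(E+\Z^2)$ bounded. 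That step is the entire content of the theorem --- it is why \cite{BS02} is a full-length paper and why Problem~\ref{meyer} remains open for $n\ge 3$ --- and your treatment of it (``a slightly enlarged ellipse'', ``a slanted box graded to the expansion rates'', then ``correct this model region so that it becomes compatible with the fixed lattice'') is a plan, not a proof. The reconciliation with $\Z^2$ cannot be waved through: in eigencoordinates the lattice becomes arbitrary, and for low-determinant matrices (e.g.\ companion matrices of $x^2-tx\pm 2$) the tension between the packing constraint $|E|\le 1$, the covering constraint $|A^TE|\ge 1$ together with its actual geometry, and forward invariance forces genuine surgery on the model region; the proof in \cite{BS02} carries this out by an explicit case analysis over integral similarity classes.

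Two further points. First, your claim that ``(M5) is the quadrature identity'' is false as stated: the QMF identity for $m_0$ together with $m_0(0)=1$ does not imply orthonormality of the integer translates of the infinite-product scaling function (in one dimension $m_0(\xi)=(1+e^{-6\pi i\xi})/2$ is the classical counterexample, producing a stretched Haar function). You need a Cohen-type condition --- a compact fundamental domain $K$ with $(A^T)^{-j}K\subset\{m_0\ne 0\}$ for all $j\ge 1$ --- which your hypotheses on $E$ can plausibly supply but which must be verified; alternatively, construct $\hat\varphi$ directly as a smooth bump normalized so that $\sum_{k\in\Z^2}|\hat\varphi(\xi+k)|^2=1$ and derive $m_0$ from it, as is standard for Meyer-type constructions. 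Second, the construction of $E$ is not ``the only step that uses $n=2$'': the smooth unitary completion also does, since for $n=3$ and $q=2$ a map $\T^3\to S^3$ can have nonzero degree ($\pi_3(S^3)=\Z$), so the null-homotopy argument is itself a two-dimensional phenomenon. This does not damage your argument for $n=2$, but it misidentifies where the higher-dimensional obstructions lie.
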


Hence Problem \ref{meyer} needs to be answered only in dimensions $\ge 3$. It is valid to ask the same question for a larger class of dilations with rational entries. Auscher's result \cite{Au1} on Meyer wavelets for rational dilations indicates that this might be a valid expectation.

\subsection{Schwartz class wavelets}

We end by stating not that serious, yet curious problem. The only known construction of wavelets in the Schwartz class is a Meyer wavelet which is a band-limited function. Hence, it is natural ask the following question.

\begin{problem} Suppose $\psi$ is an orthonormal wavelet such that $\psi$ belongs to the Schwartz class. Is
$\hat\psi$ necessarily compactly supported?
\end{problem}


\section{Proof of Theorem~\ref{rze}}\label{S4}

Let $\psi$ be an MRA wavelet as in Definition~\ref{MRA}. A function $\varphi$ given in the condition (M5) of Definition~\ref{MRA} is called a {\it scaling function}. For this function there exists a 1-periodic low-pass filter
$m\in L^2([0,1])$ and a 1-periodic high-pass filter $h\in L^2([0,1])$ such that
$\hv(2\xi)=m(\xi)\hv(\xi)$, $\hp(2\xi)=h(\xi)\hv(\xi)$ and 
 the matrix
\begin{equation}\label{u1}
\begin{bmatrix} h(\xi)&h(\xi+\frac12)\\
 m(\xi)&m(\xi+\frac12)
\end{bmatrix}
\end{equation}
is unitary for a.e. $\xi\in\R$. In particular, 
\begin{equation}\label{r1}
|\hp(2\xi)|^2=|\hv(\xi)|^2-|\hv(2\xi)|^2 \qquad\text{for a.e. }\xi \in \R.
\end{equation}
Moreover, $\hv$ satisfies the following conditions for a.e. $\xi \in \R$:
\begin{enumerate}
\item[(F1)] $\hv(2\xi)=m(\xi)\hv(\xi)$ for some measurable $1$-periodic function $m$,

\item[(F2)] $\lim_{j\to\infty}|\hv(2^{-j}\xi)|=1$,

\item[(F3)] $\sum_{k\in\bz} |\hv(\xi+k)|^2=1$.
\end{enumerate}
In fact, properties (F1)--(F3) characterize scaling function for an MRA by \cite[Theorem 7.5.2]{HW}.

A {\it scaling set} is a measurable subset $S\subset\R$, such that the characteristic function $\ch_S$ satisfies the above three conditions. This translates into the following
conditions (see \cite{PSW})

\begin{enumerate}
\item[(S1)]  $S\subset 2S$,

\item[(S2)]  $\lim_{j\to\infty}\ch_S(2^{-j}\xi)=1$,

\item[(S3)]  $\sum_{k\in\bz} \ch_S(\xi+k)=1$.

\end{enumerate}
If (S1)--(S3) are satisfied, then $W=2S\setminus S$ is a wavelet set, as defined above Theorem~\ref{con2}. 

In order to find a wavelet set $W$ in the support of $\hp$ we will find a scaling set $S$ in the support of $\hv$  and we shall prove that the wavelet set $W=2S\setminus S$ is contained  in the support of $\hp$.

Towards this goal we start with the following basic lemma.

\begin{lemma}\label{r2}
Let $A \subset \R$ be a set of a finite measure. If $K$ is a measurable subset of $\R$ satisfying {\rm (S3)}, then 
\[
\lim_{n\to\infty}\bigg|\bigcup_{k\in\bz\setminus\{0\}}(A+2^nk)\cap K \bigg|=0.
\]
\end{lemma}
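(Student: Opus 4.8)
The plan is to reduce the statement to a statement about a single fixed bounded set, and then exploit the tiling condition (S3) together with dominated convergence. First I would fix $\varepsilon>0$ and, using that $|A|<\infty$, choose $M\in\N$ so large that the part of $A$ lying outside $[-M,M]$ has measure $<\varepsilon$; since translating a bounded piece of $A$ by $2^nk$ for $k\neq 0$ eventually moves it entirely off any fixed window, it is enough to handle $A\subset[-M,M]$ and then let $\varepsilon\to 0$ at the end. So assume from now on that $A$ is bounded, say $A\subset[-M,M]$.

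Next I would rewrite the quantity to be estimated. For each $n$, set
\[
E_n:=\bigcup_{k\in\bz\setminus\{0\}}\big((A+2^nk)\cap K\big)
=\Big(\bigcup_{k\in\bz\setminus\{0\}}(A+2^nk)\Big)\cap K .
\]
Because $A\subset[-M,M]$, for $2^n>2M$ the translates $A+2^nk$, $k\in\bz$, are pairwise disjoint, so $|E_n|=\sum_{k\neq 0}|(A+2^nk)\cap K|$. The idea is to compare this with the ``$k=0$ deleted'' mass and use (S3): writing $f_n(\xi):=\sum_{k\in\bz}\ch_K(\xi+2^nk)$ and noting that after rescaling $\xi\mapsto 2^n\xi$ the function $f_n$ is $1$-periodic in the rescaled variable, condition (S3) for $K$ forces $\sum_{k\in\bz}\ch_K(\xi+2^nk)=1$ for a.e.\ $\xi$ once we group the integers $k$ according to their residue — more precisely, summing (S3) over a set of representatives shows $K$ tiles $\R$ by $2^n\bz$ as well. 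Hence for a.e.\ $\xi\in A$ exactly one integer $k=k(\xi)$ satisfies $\xi+2^nk\in K$ (equivalently $\xi\in K+2^n(-k)$), and
\[
|E_n|=\int_A \ch_{\,K\text{ hit by some }k\neq0}(\xi)\,d\xi
=\int_A \big(1-\ch_K(\xi)\big)\,\mathbf 1_{\{k(\xi)\neq 0\}}\,d\xi
\le \int_A \mathbf 1_{\{\exists\,k\neq0:\ \xi+2^nk\in K\}}\,d\xi .
\]
Now the key pointwise fact: for a.e.\ fixed $\xi$ there is a \emph{unique} $k\in\bz$ with $\xi+2^nk\in K$, and since $K$ has the tiling property one can check this $k$ must satisfy $|2^nk|\le C_K$ for a constant depending only on $K$ (the set $K$, tiling $\R$ by unit translates, is ``essentially within bounded distance of the origin on average'' — concretely $|K\cap[-R,R]^c|\to0$ is false, but $\big|\{\xi\in[-M,M]:\ k(\xi)\neq 0\}\big|\to 0$ does hold because $\bigcup_{k\neq0}(K-2^nk)$ retreats to infinity on $[-M,M]$). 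Thus $\mathbf 1_{\{\exists k\neq0:\ \xi+2^nk\in K\}}\to 0$ pointwise a.e.\ on the bounded set $A$, and it is dominated by $\ch_A\in L^1$. Dominated convergence gives $|E_n|\to 0$.

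The step I expect to be the main obstacle is justifying that the ``bad'' integer index is pushed to infinity, i.e.\ that for a.e.\ fixed $\xi\in[-M,M]$ we have $\xi\notin K-2^nk$ for every $k\neq 0$ once $n$ is large. Equivalently: $\bigcup_{k\neq0}(K+2^nk)$ has asymptotically vanishing intersection with $[-M,M]$. This is exactly a self-referential instance of the claim (with $A=[-M,M]$), so to avoid circularity I would argue it directly from (S3): decompose $K=\bigsqcup_{m\in\bz}\big(K\cap[m,m+1)\big)$ up to null sets; for each $m$, the piece $K\cap[m,m+1)$ is bounded, and for $2^n>2M+2|m|+2$ its translates $K\cap[m,m+1)+2^nk$ with $k\neq0$ miss $[-M,M]$ entirely. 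The integrability $\sum_m|K\cap[m,m+1)|=|K|$ — which is \emph{not} finite in general — is the subtlety; what saves us is that we only need $\sum_m$ of the \emph{overlap with $[-M,M]$}, and for fixed $n$ only the finitely many $m$ with $|m|\le 2M+2^n$ can contribute, while a tail-in-$m$ estimate from (S3) controls the rest uniformly. Making this tail estimate precise — packaging ``$K$ tiles by $\bz$, hence its mass near any scale is controlled'' into a quantitative bound — is the technical heart, after which the dominated-convergence conclusion is routine.
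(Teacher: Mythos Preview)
Your proposal contains two genuine errors, and together they obscure the simple idea that makes the lemma work.

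First, the claim that ``summing (S3) over a set of representatives shows $K$ tiles $\R$ by $2^n\Z$ as well'' is false. Condition (S3) says $\sum_{k\in\Z}\ch_K(\xi+k)=1$; it does \emph{not} follow that $\sum_{k\in\Z}\ch_K(\xi+2^nk)=1$. (Take $K=[0,1)$.) Consequently your ``exactly one integer $k=k(\xi)$ with $\xi+2^nk\in K$'' is wrong: there is at most one such $k$, but there may be none. This undermines the displayed identity for $|E_n|$ and the subsequent pointwise argument. Moreover, the pointwise convergence $\mathbf 1_{\{\exists k\neq 0:\ \xi+2^nk\in K\}}\to 0$ that you want to feed into dominated convergence is itself not proved; you correctly flag this as ``the technical heart'' but never supply it, and the sketch you give (decompose $K$ into unit pieces, push them off $[-M,M]$) is the lemma again in disguise.

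Second, and more strategically damaging, you write that $|K|=\sum_m |K\cap[m,m+1)|$ ``is not finite in general.'' This is wrong: integrating (S3) over $[0,1)$ gives $|K|=1$. Once you notice $|K|<\infty$, the paper's proof becomes almost immediate: truncate \emph{both} $A$ and $K$ to $[-M,M]$ with tails of measure $<\varepsilon/2$. The bounded--bounded interaction vanishes for $n$ large; the interaction with the $K$-tail is $\le |K\cap[-M,M]^c|\le\varepsilon/2$; and the interaction of the $A$-tail with $K$ is $\le |A\cap[-M,M]^c|\le\varepsilon/2$ by the tiling identity (S3). No dominated convergence, no pointwise analysis, no circularity. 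Your entire difficulty stems from truncating only $A$ and then wrestling with an ``unbounded'' $K$ that is in fact a set of measure one.
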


\begin{proof}
Condition (S3) implies that the measure of $K$ satisfies $|K|=1$. Let $\e>0$. Since $A$ and $K$ have a finite measure there is an $M\in\bn$ such that
$|A\cap[-M,M]^c|\le\frac\e2$ and $|K\cap[-M,M]^c|\le\frac\e2$. Let $A_0=A\cap[-M,M]^c$, $A_1=A\cap[-M,M]$ and  $K_0=K\cap[-M,M]^c$, $K_1=K\cap[-M,M]$. We have that
\[
\begin{aligned}
\bigg|\bigcup_{k\in\bz\setminus\{0\}}(A&+2^nk)\cap K\bigg|\le 
\bigg|\bigcup_{k\in\bz\setminus\{0\}}(A+2^nk)\cap K_0\bigg|
\\
&+\bigg|\bigcup_{k\in\bz\setminus\{0\}}(A_0+2^nk)\cap K_1\bigg|
+\bigg|\bigcup_{k\in\bz\setminus\{0\}}(A_1+2^nk)\cap K_1 \bigg|.
\end{aligned}
\]
Clearly, $|\bigcup_{k\in\bz\setminus\{0\}}(A+2^nk)\cap K_0|\le|K_0|\le\frac\e2$. Also there is an $N\in\bn$ such that $ |\bigcup_{k\in\bz\setminus\{0\}}(A_1+2^nk)\cap K_1|=0$ for $n\ge N$.
Moreover, since $K$ satisfies (S3), we have
\[
\begin{aligned}
\bigg|\bigcup_{k\in\bz\setminus\{0\}}(A_0+2^nk)\cap K_1 \bigg |\le \bigg|\bigcup_{k\in\bz}(A_0+k)\cap K \bigg|\le \sum_{k\in\bz}\int_{\R} \ch_{A_0}(\xi-k)\ch_K(\xi)\,d\xi\\
= \int_{A_0}\sum_{k\in\bz}\ch_{K}(\xi+k)\,d\xi=|A_0|\le\frac\e2.
\end{aligned}
\]
Therefore, $|\bigcup_{k\in\bz\setminus\{0\}}(A+2^nk)\cap K|\le \e$, which implies that the limit of the measures is zero.
\end{proof}

For the next lemma we define a 1-periodization of a set $E\subset \R$ by 
\[
E^P=\bigcup_{k\in\bz}(E+k).
\]
\begin{lemma}\label{r3}
If a measurable set $S'\subset \R$ satisfies {\rm (S1)}, {\rm (S2)}, and
\begin{equation}\label{r4}
\sum_{k\in\bz} \ch_{S'}(\xi+k)\ge1 \qquad\text{for a.e. }\xi\in\R,
\end{equation}
then there exists a scaling set contained in $S'$.
\end{lemma}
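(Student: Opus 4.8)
\emph{Approach.} The plan is to obtain the scaling set by trimming $S'$, dilation orbit by dilation orbit, to the right ``height''. Condition (S1) says $S'$ is closed under division by $2$, so on each dyadic dilation orbit $\mathcal{O}=\{2^{j}\xi_{0}:j\in\Z\}$ the set $S'$ is a downward ray $S'\cap\mathcal{O}=\{2^{j}\xi_{0}:j\le M_{\mathcal{O}}\}$ with $M_{\mathcal{O}}\in\Z\cup\{+\infty\}$ (the value $+\infty$ occurring exactly on the dilation-invariant ``core'' $\bigcap_{j\in\Z}2^{j}S'$), and (S2) forces $M_{\mathcal{O}}>-\infty$ for a.e.\ orbit. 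Every subset $S\subset S'$ that is again closed under halving has the form $S=\bigcup_{\mathcal{O}}\{2^{j}\xi_{0}:j\le N_{\mathcal{O}}\}$ for a measurable assignment of heights $N_{\mathcal{O}}\le M_{\mathcal{O}}$, and if $N_{\mathcal{O}}>-\infty$ for a.e.\ orbit then such an $S$ automatically satisfies (S1) and (S2), since in that case $\bigcup_{j\in\Z}2^{j}S=\bigcup_{\mathcal{O}}\mathcal{O}=\R$ modulo null sets. Thus the whole problem reduces to choosing the heights $N_{\mathcal{O}}$ so that, in addition, the translation tiling $\sum_{k\in\Z}\ch_{S}(\cdot+k)=1$ holds a.e.: starting from $S=S'$, which over-tiles by hypothesis, one must cure the over-tiling by lowering heights where integer translates overlap and relocating heights to re-cover any coset a lowering leaves exposed.

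I would carry this out as an exhaustion over dyadic scales: at stage $n$ one repairs the translation count on the annulus $\{2^{-n}\le|\xi|<2^{n}\}$, modifying the set obtained so far only near and above scale $2^{n}$, so that the successive sets $S_{n}$ stabilise on larger and larger regions and have a well-defined limit $S$. Each lowering keeps the ray inside $S'$ automatically, since $S'$ is halving-closed, and it is routine to check that $S$ remains closed under halving and that no orbit is emptied out, so that $S\subset S'$ and $S$ satisfies (S1) and (S2). The substantive point is that the limit satisfies $\sum_{k\in\Z}\ch_{S}(\cdot+k)=1$ a.e., and this is exactly where Lemma~\ref{r2} enters: a repair carried out near scale $2^{n}$ is, after rescaling by $\xi\mapsto 2^{-n}\xi$, a perturbation of $S$ by a set $A$ of finite measure, so its ``wrap-around'' effect on the already-corrected unit block $[0,1)$ is at most $\bigl|\bigcup_{k\in\Z\setminus\{0\}}(A+2^{n}k)\cap[0,1)\bigr|$, which tends to $0$ by Lemma~\ref{r2}; summing these vanishing errors over $n$ shows the translation count of $S$ stays equal to $1$ in the limit rather than drifting upward.

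The hard part is precisely this cross-scale interference. Since every dilation ray is infinite in the downward direction, changing one ray's height alters membership in $S$ at \emph{all} smaller scales simultaneously, and the integer translates of those alterations can damage the tiling already arranged at larger scales. Arranging the repairs so that they stabilise --- that the process does not chase its own tail, that the required relocations are always available, and that the translation count genuinely converges to $1$ --- is the delicate issue, and it is the quantitative decay supplied by Lemma~\ref{r2} that makes it possible to close up the construction. (The degenerate case $S'=\R$ is of course handled directly by $S=[-\tfrac12,\tfrac12)$.)
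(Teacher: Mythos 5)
Your outline correctly identifies the shape of the problem (trim the halving-closed set $S'$ down to a translation tile) and the right tool (Lemma~\ref{r2}), but it stops exactly where the proof has to begin. The entire content of the lemma is the construction you defer: you never specify the ``repair'' operation on the annulus $\{2^{-n}\le|\xi|<2^{n}\}$, never say where the ``relocated'' heights come from, and you concede that making the process stabilise is ``the delicate issue.'' That issue is real and is not resolved by invoking Lemma~\ref{r2} in the abstract: when you lower a ray's height to kill an overlap of integer translates, the coset $\xi+\Z$ may become uncovered; to restore coverage you must add another point of $S'$ on that coset, but adding a point forces you (to keep halving-closure) to add its entire downward ray, which creates fresh overlaps at all smaller scales, which require further lowerings, and so on. Nothing in your sketch shows that this cascade terminates, that a replacement point in $S'$ on the exposed coset is always available at a usable scale, or that the errors are summable; ``the process does not chase its own tail'' is asserted, not proved.

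The paper's proof avoids the iteration entirely by reversing the order of operations. First, using \eqref{r4} one extracts a single measurable set $K\subset S'$ satisfying (S3), patched with $K_0=S'\cap[-\tfrac12,\tfrac12]$ so that $K$ also inherits (S2); this fixes the translation tile once and for all, so no ``relocation'' is ever needed. Then $S$ is written in closed form as $S=\bigcup_{n\ge0}E_n$ with
\[
E_n=2^{-n}K\setminus\bigcup_{j=n+1}^{\infty}\bigl((2^{-j}K)^P\setminus 2^{-j}K\bigr),
\]
i.e.\ each dilate $2^{-n}K$ minus the integer-translate ``shadows'' of all smaller dilates. Disjointness of the translates of $S$ is then built into the definition, $E_n\subset 2E_{n+1}$ gives (S1), and Lemma~\ref{r2} is used only for two measure-zero verifications: that a.e.\ point of $K$ survives into some $E_n$ after finitely many halvings (giving (S2)), and that the cosets lost to the excisions form a null set (giving $S^P=\R$, hence (S3)). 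If you want to salvage your orbit-by-orbit picture, you should replace the adaptive repair scheme with an explicit formula of this kind; as written, the proposal is a plan with its central step missing rather than a proof.
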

\begin{proof}
By (\ref{r4})  there is a measurable set $K'\subset S'$ satisfying (S3). Let $K_0=S'\cap[-\frac12,\frac12]$ and $K=K_0\cup(K'\setminus K_0^P)$. Clearly, $K$ is a subset of $S'$ satisfying (S3). Moreover, since (S2) holds for $S'$ we conclude that (S2) holds for $K_0$ and, therefore, for $K$, which contains $K_0$.

For an integer $n\ge 0$ define
\[
E_n=2^{-n}K\setminus\bigcup_{j=n+1}^\infty((2^{-j}K)^P\setminus2^{-j}K).
\]
We claim that $S=\bigcup_{n=0}^\infty E_n$ is a scaling set contained in $S'$. For $n\ge 0$ we have that $E_n\subset2^{-n}K\subset2^{-n}S'\subset S'$, where the last inclusion follows from (S1) for $S'$. This proves that $S\subset S'$ and it remains to show that $S$ satisfies (S1)--(S3).

To prove the inclusion $S\subset 2S$, it is enough to check that $E_n\subset 2E_{n+1}$ for $n\ge 0$. Since $2E^P\subset(2E)^P$ holds for every $E\subset \R$  we have that
\[
\begin{aligned}
E_n &=2^{-n}K\setminus\bigcup_{j=n+2}^\infty((2^{-j+1}K)^P\setminus2^{-j+1}K)
\\
&\subset 2^{-n}K\setminus\bigcup_{j=n+2}^\infty
(2(2^{-j}K)^P\setminus2^{-j+1}K)=2E_{n+1}.
\end{aligned}
\]
Thus, (S1) holds for $S$.

We have already observed that $K$ satisfies (S2). Hence, (S2) for $S$ will follow from the equality $\lim_{j\to\infty}\ch_S(2^{-j}\xi)=1$ for a.e. $\xi\in K$.
Let us fix $\xi\in K$. Since we have already proven that $S\subset 2S$, the above equality is satisfied if there is an $n\ge 0$ such that $2^{-n}\xi\in S$. 
Consider for $n\ge 0$
\[
B_n=2^n\bigcup_{j=n+1}^\infty((2^{-j}K)^P\setminus2^{-j}K)
\]
and $B=\bigcap_{n=0}^\infty B_n\cap K$. If $\xi\notin B$, then $\xi\notin B_n\cap K$ for some $n\ge0$. Since $\xi\in K$, this implies that $2^{-n}\xi\notin 2^{-n}B_n$; that is,
\[
2^{-n}\xi\in 2^{-n}K\setminus\bigcup_{j=n+1}^\infty((2^{-j}K)^P\setminus2^{-j}K)=E_n\subset S.
\]
That $S$ satisfies (S2) will follow, if we show that $B$ has measure zero. Clearly, $|B|\le \liminf_{n\to\infty}|B_n\cap K|$. Since $K$ satisfies (S3) we have
\[
(2^{-j}K)^P\setminus2^{-j}K=\bigcup_{k\in\bz\setminus\{0\}}(2^{-j}K+k)
\]
for $j\ge 0$. Therefore, for $n\ge 0$
\[
\begin{aligned}
B_n=2^n\bigcup_{j=n+1}^\infty\bigcup_{k\in\bz\setminus\{0\}}(2^{-j}K+k)=\bigcup_{j=n+1}^\infty\bigcup_{k\in\bz\setminus\{0\}}(2^{n-j}K+2^nk)\\
=\bigcup_{j=1}^\infty\bigcup_{k\in\bz\setminus\{0\}}(2^{-j}K+2^nk)=\bigcup_{k\in\bz\setminus\{0\}}(A+2^nk),
\end{aligned}
\]
where $A=\bigcup_{j=1}^\infty 2^{-j}K$. Since (S3) for $K$ gives that $|K|=1$, $A$ is of finite measure $|A|\le \sum_{j=1}^\infty |2^{-j}K|=1$. From Lemma~\ref{r2} it follows that $\lim_{n\to\infty}|B_n\cap K|=0$, which shows that $B$ has measure zero. Hence $S$ satisfies (S2).

Proving that $S$ satisfies (S3) is equivalent to showing that modulo null sets $S^P=\R$ and 
\begin{equation}\label{ds}
(S+k)\cap S=\emptyset\qquad\text{for }k\in\bz\setminus\{0\}.
\end{equation}
To see that the intersection is empty it is enough to check that 
\[
(E_n+k)\cap E_m=\emptyset \qquad\text{for }k\in\bz\setminus\{0\}, 
\ m,n\ge 0.
\]
Without loss of generality we can assume that $m\le n$. If $m=n$, then 
\[(E_n+k)\cap E_n\subset (2^{-n}K+k)\cap 2^{-n}K=2^{-n}((K+2^nk)\cap K)=\emptyset\]
for  $k\in\bz\setminus\{0\}$,
since $K$ satisfies (S3). Likewise,
\[
(E_n+k)\cap2^{-n}K\subset (2^{-n}K+k)\cap 2^{-n}K=\emptyset
\]
 for $k\in\bz\setminus\{0\}$. Thus,
$E_n+k\subset (2^{-n}K)^P\setminus 2^{-n}K$. If $m<n$, then 
$E_n+k\subset\bigcup_{j=m+1}^\infty (2^{-j}K)^P\setminus 2^{-j}K$. This implies that
$(E_n+k)\cap E_m=\emptyset$ for $k\in\bz\setminus\{0\}$ and hence \eqref{ds} holds.

The last step of the proof is to show that $S^P=\R$ (modulo null sets). Since $K$ satisfies (S3), it is enough to prove that $K\subset S^P$. For $n\ge 0$ let
\[
C_n=\bigcup_{j=n+1}^\infty (2^{-j}K)^P
\qquad\text{and}\qquad
C=\bigcap_{n=0}^\infty C_n\cap K.
\]
Let $\xi\in K$. If $\xi\notin C_0$, then $\xi\in K\setminus C_0\subset E_0\subset S$.  
Therefore, we can concentrate on the case when $\xi\in C_0$. Since $C_{n+1}\subset C_n$, it is clear that either $\xi\in C$ or there is an $n'\ge 1$ such that $\xi\in C_{n'-1}\setminus C_{n'}$. If the latter is satisfied, then $\xi\in(2^{-n'}K)^P$. Therefore, there is an $l\in\bz$ such that $\xi+l\in 2^{-n'}K$. Moreover, since $\xi$ does not belong to $C_{n'}$, neither does $\xi+l$. This gives us $\xi+l\in 2^{-n'}K\setminus C_{n'}\subset E_{n'}\subset S$. This proves that $\xi\in S^P$.

We close the proof by showing that $C$ has measure zero. Indeed, $|C|\le \liminf_{n\to\infty}|C_n\cap K|$. Since $K$ satisfies (S3),  for any  measurable $E\subset \R$ we have that $|E^P\cap K|=|E|$, therefore 
\[
|C_n\cap K|\le \sum_{j=n+1}^\infty| (2^{-j}K)^P\cap K|=\sum_{j=n+1}^\infty| 2^{-j}K|=2^{-n}|K|=2^{-n},
\]
proving that $|C|=0$. Thus, we have shown that (S3) holds for $S$ and this ends the proof of the lemma.
\end{proof}

The above lemma immediately yields the following
\begin{proposition}\label{r5}
For every scaling function $\varphi$ there exists a scaling set contained in the support of $\hv$.
\end{proposition}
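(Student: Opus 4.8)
The plan is to deduce Proposition~\ref{r5} from Lemma~\ref{r3} by verifying that the set $S' := \supp \hv$ satisfies conditions (S1), (S2), and \eqref{r4}. Recall that $\hv$ satisfies (F1)--(F3). These are exactly the properties that will translate into (S1), (S2), and \eqref{r4} for $S'$ once we pass from the function $\hv$ to its support.

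\medskip

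Here is how I would carry out the three verifications. For (S1), the scaling relation (F1), $\hv(2\xi)=m(\xi)\hv(\xi)$, shows immediately that if $\hv(2\xi)\ne 0$ then $\hv(\xi)\ne 0$; that is, $\xi \in \frac12 S'$ whenever $\xi \in \frac12\cdot(2S') $, which upon rescaling is precisely $S' \subset 2S'$. For (S2), since $\lim_{j\to\infty}|\hv(2^{-j}\xi)|=1$ for a.e.\ $\xi$ by (F2), for a.e.\ $\xi$ there is $j_0$ with $\hv(2^{-j}\xi)\ne 0$ for all $j\ge j_0$, hence $\ch_{S'}(2^{-j}\xi)=1$ for all large $j$, giving $\lim_{j\to\infty}\ch_{S'}(2^{-j}\xi)=1$ a.e. For \eqref{r4}, the quasi-periodicity identity (F3), $\sum_{k\in\Z}|\hv(\xi+k)|^2=1$, forces, for a.e.\ $\xi$, at least one index $k$ with $\hv(\xi+k)\ne 0$; therefore $\sum_{k\in\Z}\ch_{S'}(\xi+k)\ge 1$ a.e., which is exactly \eqref{r4}. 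With all three hypotheses of Lemma~\ref{r3} confirmed for $S'=\supp\hv$, the lemma produces a scaling set $S \subset S' = \supp\hv$, which is the assertion of Proposition~\ref{r5}.

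\medskip

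There is essentially no hard step here — the proposition is a direct packaging of Lemma~\ref{r3} applied to the support of the scaling function, and the three implications above are one-line observations each. The only minor point to be careful about is the ``for a.e.\ $\xi$'' bookkeeping: conditions (F1)--(F3) hold only almost everywhere, so the derived conditions (S1), (S2), \eqref{r4} for $S'$ also hold only modulo null sets, but Lemma~\ref{r3} is stated with a.e.\ hypotheses, so this causes no difficulty. If anything deserves a sentence of care, it is (S1): one should note that $S'\subset 2S'$ is meant modulo null sets, and that the implication ``$\hv(2\xi)\ne 0 \Rightarrow \hv(\xi)\ne 0$'' from (F1) holds for a.e.\ $\xi$ since $m$ is only defined a.e. Given these remarks, the proof is essentially immediate, which is presumably why the authors call it an immediate corollary of the lemma.
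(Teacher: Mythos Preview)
Your proof is correct and follows exactly the same approach as the paper: verify that $S'=\supp\hv$ satisfies (S1), (S2), and \eqref{r4} as consequences of (F1)--(F3), then invoke Lemma~\ref{r3}. The only blemish is the intermediate clause in your (S1) argument --- ``$\xi \in \frac12 S'$ whenever $\xi \in \frac12\cdot(2S')$'' is garbled (since $\frac12\cdot(2S')=S'$, this would give the reverse inclusion), though your starting implication $\hv(2\xi)\ne 0\Rightarrow\hv(\xi)\ne 0$ and your final conclusion $S'\subset 2S'$ are both correct.
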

\begin{proof}
Let $S'$ be the support of $\hv$. Since $\hv$ satisfies conditions (F1)-(F3), it is clear that $S'$ satisfies (S1), (S2) and (\ref{r4}). Therefore, by Lemma~\ref{r3}, there exists a scaling set contained in $S'$.
\end{proof}
Finally we can conduct the proof of Theorem \ref{rze}.
\begin{proof}[Proof of Theorem \ref{rze}]
Let $\psi$ be an MRA wavelet and $\varphi$ its associated scaling function. By Proposition~\ref{r5} there is a scaling set $S$ contained in $\supp \hv$. We want to show that the wavelet set $W=2S\setminus S$ is contained in $\supp\hp$. 

From (\ref{r1}) it follows that $|\hp(\xi)|^2=|\hv(\frac\xi2)|^2-|\hv(\xi)|^2$. Therefore 
\[
\supp\hp=(2\supp\hv)\setminus D, \qquad\text{where }D=\{\xi\in\R:|\hv(\frac\xi2)|=|\hv(\xi)|\}.
\]
Clearly, $W=2S\setminus S\subset 2\supp\hv$. Thus, in order to establish that $W\subset \supp\hp$, it is enough to prove that $W\cap D=\emptyset$ or, equivalently, that $2S\cap D\subset S$. Assume that $\xi\in 2S\cap D$, then $|\hv(\frac\xi2)|=|\hv(\xi)|>0$, because $\frac\xi2\in S$. This implies that $|m(\frac\xi2)|=1$, where $m$ is the low-pass filter of $\varphi$. Thus, the unitarity of (\ref{u1}) and 1-periodicity of $m$ gives that  $m(\frac\xi2+\frac k2)=0$ for all odd integers $k$. Hence, for such $k$ we obtain that 
$\hv(\xi+k)=m(\frac\xi2+\frac k2)\hv(\frac\xi2+\frac k2)=0$. This already implies that $\xi\in S$. Indeed, since $S$ satisfies (S3), there is an $l\in\bz$
such that $\xi+l\in S$. Thus, $\hv(\xi+l)\not=0$, so $l$ must be even. Obviously, $\frac \xi2+\frac l2\in\frac S2\subset S$ and $\frac\xi 2\in S$. Therefore,
(S3) for $S$ implies that $l=0$ and $\xi\in S$ follows.
\end{proof}

\end{document}